\DeclareMathOperator*{\argminA}{arg\,min}
\definecolor{mydarkblue}{rgb}{0,0.08,0.45}
\definecolor{mydarkblue}{rgb}{0,0.08,0.45}
\definecolor{myteal}{RGB}{27,158,119}
\definecolor{myorange}{RGB}{217,95,2}
\definecolor{myred}{RGB}{231,41,138}
\definecolor{mypurple}{RGB}{152,78,163}
\definecolor{myblue}{RGB}{55,126,184}
\definecolor{mygreen}{RGB}{0,100,0}
\def\xx{{\boldsymbol x}}
\def\XX{{\boldsymbol X}}
\def\ZZ{{\boldsymbol Z}}
\def\aa{{\boldsymbol a}}
\def\bb{{\boldsymbol b}}
\def\ww{{\boldsymbol w}}
\def\II{{\boldsymbol I}}
\def\ww{{\boldsymbol w}}
\def\AA{{\boldsymbol A}}
\def\MM{{\boldsymbol M}}
\def\DD{{\boldsymbol D}}
\def\OO{{\boldsymbol O}}
\def\PP{{\boldsymbol P}}
\def\UU{{\boldsymbol U}}
\def\VV{{\boldsymbol V}}
\def\SSigma{{\boldsymbol \Sigma}}
\newcommand{\E}{\mathbb{E}}
\newcommand{\EE}{\mathcal{E}}
\newcommand{\K}{\mathcal{K}}
\newcommand{\X}{\mathcal{X}}
\newcommand{\Y}{\mathcal{Y}}
\def\nnu{{\boldsymbol \nu}}
\def\eeta{{\boldsymbol \eta}}
\def\RR{{\mathbb R}}
\def\defas{\stackrel{\text{def}}{=}}
\DeclareMathOperator{\tr}{tr}
\def\defas{\stackrel{\text{def}}{=}}
\DeclareMathOperator*{\diag}{\mathbf{diag}}
\newtheorem{lemma}{Lemma}
\newtheorem{theorem}{Theorem}
\newtheorem{corollary}{Corollary}
\newtheorem{prop}{Proposition}
\newtheorem{definition}{Definition}
\newtheorem{remark}{Remark}
\newtheorem{assumption}{Assumption}[section]
\title{Trajectory of Mini-Batch Momentum\\
{\Large Batch Size Saturation and Convergence in High Dimensions} }
\author{%
  Kiwon Lee\thanks{Department of Mathematics and Statistics, McGill University, Montreal, QC; KL email \url{kiwon.lee@mail.mcgill.ca}; ANC email \url{andrew.cheng@mail.mcgill.ca}; CP is a CIFAR AI chair, MILA and CP was supported by a Discovery Grant from the
Natural Science and Engineering Research Council (NSERC) of Canada; website \url{https://cypaquette.github.io/} and email \url{courtney.paquette@mcgill.ca}. Research by EP was supported by a Discovery Grant from the
Natural Science and Engineering Research Council (NSERC) of Canada; website \url{https://elliotpaquette.github.io/} and email: \url{elliot.paquette@mcgill.ca}. } 
    \and Andrew N. Cheng\footnotemark[1]
    \and Elliot Paquette\footnotemark[1]
    \and Courtney Paquette\footnotemark[1] \thanks{Google Research, Brain Team}
  % examples of more authors
  % \And
  % Coauthor \\
  % Affiliation \\
  % Address \\
  % \texttt{email} \\
  % \AND
  % Coauthor \\
  % Affiliation \\
  % Address \\
  % \texttt{email} \\
  % \And
  % Coauthor \\
  % Affiliation \\
  % Address \\
  % \texttt{email} \\
  % \And
  % Coauthor \\
  % Affiliation \\
  % Address \\
  % \texttt{email} \\
}
\date{}
\DeclareDocumentCommand{\Prto} {o} {
  \IfNoValueTF {#1}
  {\overset{\Pr}{\longrightarrow}}
  { \xrightarrow[ #1 \to \infty]{\Pr }}
}
\DeclareDocumentCommand{\law} {o} {
  \IfNoValueTF {#1}
  {\overset{\text{law}}{=}}
  { \xrightarrow[ #1 \to \infty]{\Pr }}
}
\begin{document}
\maketitle

\begin{abstract}
 We analyze the dynamics of large batch stochastic gradient descent with momentum (SGD+M) on the least squares problem when both the number of samples and dimensions are large. In this setting, we show that the dynamics of SGD+M converge to a deterministic discrete Volterra equation as dimension increases, which we analyze.  We identify a stability measurement, the implicit conditioning ratio (ICR), which regulates the ability of SGD+M to accelerate the algorithm.  When the batch size exceeds this ICR, SGD+M converges linearly at a rate of $\mathcal{O}(1/\sqrt{\kappa})$, matching optimal full-batch momentum (in particular performing as well as a full-batch but with a fraction of the size).  For batch sizes smaller than the ICR, in contrast, SGD+M has rates that scale like a multiple of the single batch SGD rate. We give explicit choices for the learning rate and momentum parameter in terms of the Hessian spectra that achieve this performance.
 \end{abstract}
 
 Stochastic learning algorithms are the methods of choice for optimization of high-dimensional problems. Often stochastic learning algorithms incorporate momentum into their stochastic gradients to improve practical performance. Perhaps the simplest, stochastic gradient descent with momentum (SGD+M) adds a fixed multiple of the backward difference of iterates to its stochastic gradient estimator, see Section~\ref{sec:motivation} for details. In the influential work of \citep{sutskever13}, the authors empirically show augmenting stochastic gradient descent (SGD) with momentum significantly improves training performance of deep neural networks. Despite the wide usage of these stochastic momentum methods in machine learning practice, our understanding of its behaviour is not well--understood. 
 
 It has been hypothesized that stochastic-based momentum algorithms improve training because they are employed on a large batch of a data set \citep{kidambi2018}; thereby emulating the speed-up one sees in full-batch settings. For many learning problems, the ``large batch'' setting is often paired with high-dimensional problems, meaning there are many samples (and likely also many features to have interesting behavior). 
%  Note that to even make sense of the qualifier ``large batch'', the problem must be high-dimensional setting, meaning there are many samples (and likely also many features to have interesting behavior).  
 We know of no theoretical analysis that can justify this claim for standard SGD+M, although for variations of SGD+M and SGD with Nesterov momentum \citep{nesterov2004introductory} there has been some success in proving accelerated rates \citep{jain2018accelerating,Liu2020accelerating,allen2017katyusha}. A reason is that typical approaches for analyzing SGD+M do not distinguish large and small batch sizes. We address this problem in this paper and we introduce a stability measurement that exactly captures the transition of SGD+M to an accelerated method. We comment that in the high--dimensional, \textit{vanishing batch fraction setting} (the mini-batch size is $o(n)$, where $n$ is the number of samples), there is work proving in various simplified settings that SGD+M produces the same iterates as SGD with a larger learning rate, up to a vanishing error \citep{paquette2021dynamics}.

% \begin{wrapfigure}[22]{L}{0.45\textwidth}
% \vspace{-2em}
%      \includegraphics[width = 1.0\linewidth]{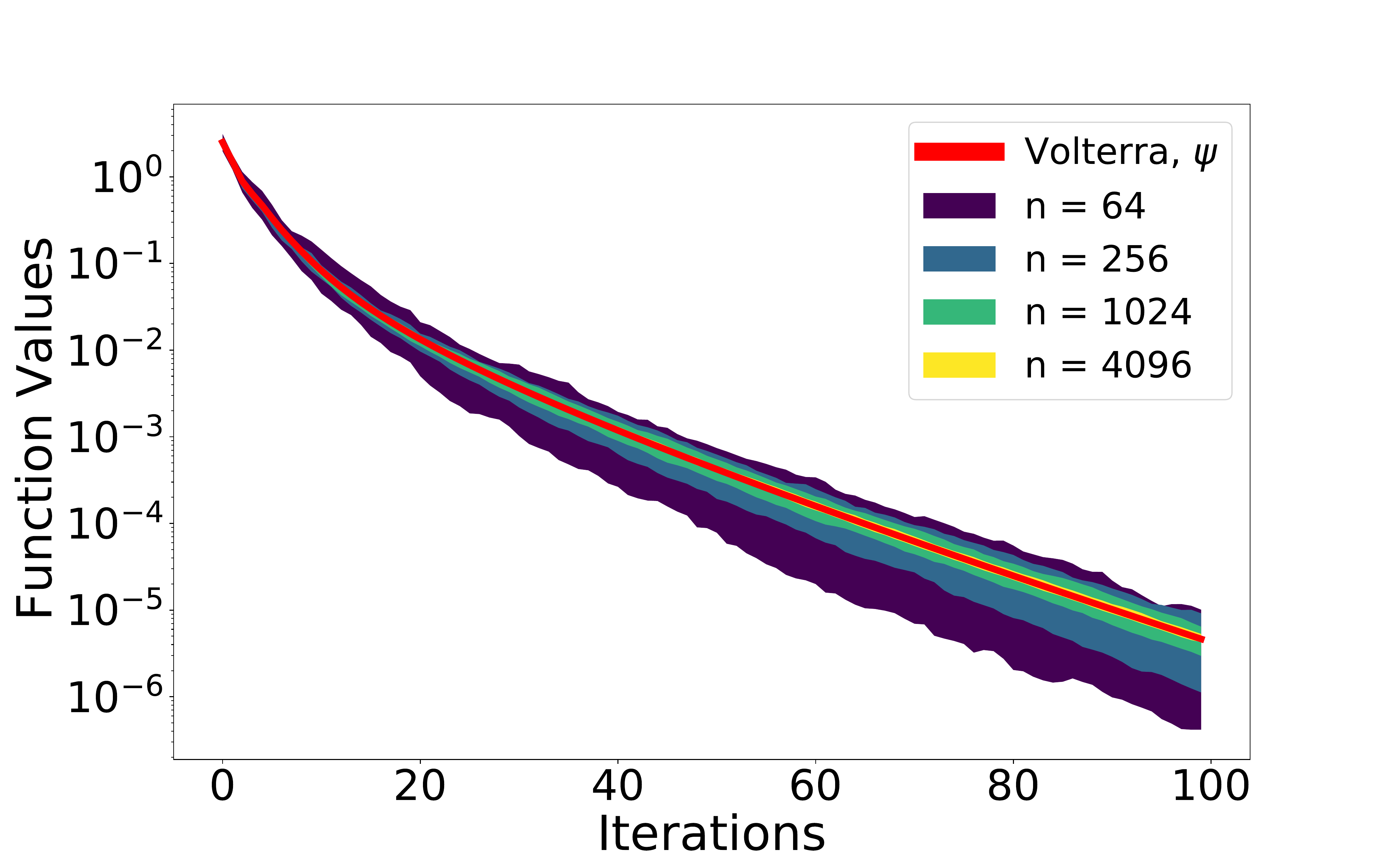}
%      %\vspace{-1.0cm}
%      \caption{\textbf{Concentration of SGD+M on a Gaussian random least squares
% problem} such that the ratio $d/n$ is fixed to be $2$; 30 runs of SGD+M and the $80$th percentile confidence intervals recorded (shaded region) for each $n$. The parameters for SGD+M are $\Delta = 0.5$, $\gamma = 0.4, \zeta=0.5$, see Section ~\ref{sec:random_lsp}. The random least squares problem becomes non-random in the large limit and all
% runs of SGD+M converge to a deterministic function $\psi(t)$ (red) given by our Volterra equation \eqref{eq:volterra_main}.}
%     \label{fig:volterra_sgdm_comparison}
% \end{wrapfigure} \begin{wrapfigure}[24]{R}{0.45\textwidth}
% \vspace{-1em}
%      \includegraphics[width = 1.0\linewidth]{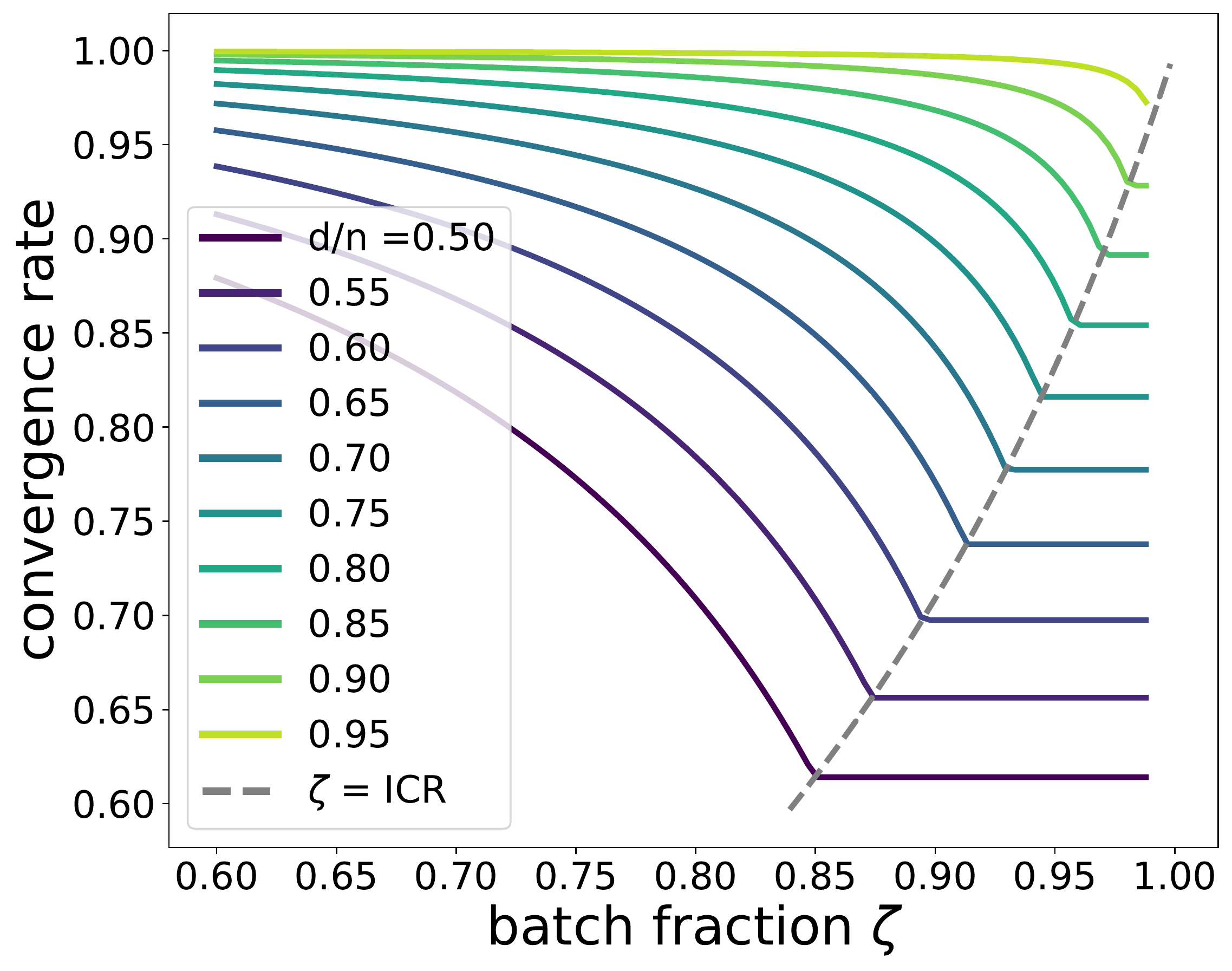}
%      \vspace{-0.5cm}
%      \caption{\textbf{Convergence rate as batch size changes} on a Gaussian random least squares
% problem with ratio $r \defas \frac{d}{n}$ varying. Here $\bar{\kappa} = 1 / (1-\sqrt{\frac{1}{r}})^2 $} and $\kappa = (1+\sqrt{\frac{1}{r}})^2 / (1-\sqrt{\frac{1}{r}})^2$. When $\zeta$ goes above a specific value (see Proposition \ref{prop:lambda_delta_choice}), the convergence rate freezes at $1 / \sqrt{\kappa}$. Otherwise the rate behaves like $C / \kappa$ with a constant $C = \zeta / (1-\zeta)$ and we get no speed up from SGD+M.
% \label{fig:convergence_rates_vs_batch_sizes}
% \end{wrapfigure}

In this paper, we study the dynamics of mini-batch SGD+M (with constant learning rate) on a least squares problem when the number of samples $n$ and features $d$ are large (see Section~\ref{sec:motivation} for details). We are motivated by the setting where the mini-batch size $\beta$ is proportionate to the number of samples $n$ and so we define the ratio $\zeta \defas \beta/n$, which we refer to as the \textit{batch fraction}.  We provide a non-asymptotic comparison for the behavior of the training loss under SGD+M to a deterministic function $\psi$, whose accuracy improves when the number of samples and features are large while the batch fraction is strictly positive (see Figure~\ref{fig:volterra_sgdm_comparison}).
 \begin{wrapfigure}[49]{r}{0.48\textwidth}
\centering
\vspace{-0.4cm}
\includegraphics[width=0.45\textwidth]{figures/volterraConcentration.pdf}
\caption{\textbf{Concentration of SGD+M on a Gaussian random least squares
problem} such that the ratio $d/n$ is fixed to be $2$; 30 runs of SGD+M and the $80$th percentile confidence intervals recorded (shaded region) for each $n$. The parameters for SGD+M are $\Delta = 0.5$, $\gamma = 0.4, \zeta=0.5$, see Section ~\ref{sec:random_lsp}. The random least squares problem becomes non-random in the large limit and all
runs of SGD+M converge to a deterministic function $\psi(t)$ (red) given by our Volterra equation \eqref{eq:volterra_main}.}
    \label{fig:volterra_sgdm_comparison}
\vspace{0.3cm}
\centering
\includegraphics[width=0.4\textwidth]{figures/batch_size.pdf}
\caption{\textbf{Effect of the batch fraction on the convergence rate of SGD+M} for a Gaussian random least squares
problem (data matrix $\AA \in \mathbb{R}^{n \times d}$) with ratio $r \defas d/n$ varying. Here, because of the well-known Marchenko-Pastur law (see \citep{marchenko1967} and Appendix~\ref{apx:app_numerical_simulations}),  $\bar{\kappa} = 1 / (1-\sqrt{1/r})^2 $ and $\kappa = (1+\sqrt{1/r})^2 / (1-\sqrt{1/r})^2$ (see \eqref{eq:condition_numbers} and Appendix~\ref{apx:app_numerical_simulations}). When $\zeta \ge \text{ICR}$ (large batch regime), the convergence rate freezes at $1 / \sqrt{\kappa}$ (see Proposition \ref{prop:lambda_delta_choice}). Otherwise the rate is $\zeta / ( (1-\zeta) \bar{\kappa})$ and there is no speed up from SGD+M over SGD. There is a saturation point, or \textit{saturating batch fraction} (dashed gray), after which increasing the batch fraction does not improve convergence. Moreover this point occurs before full batch (i.e., $\zeta = 1$) and it is fully explained by the trace, maximum and minimum (nonzero) eigenvalues of $\AA \AA^T$, that is when $\zeta = \text{ICR}$. 
% Furthermore we see analytically the exact effect of the batch size on the convergence rate of SGD+M. 
}
\label{fig:convergence_rates_vs_batch_sizes}
\end{wrapfigure}
This function $\psi$ solves a discrete Volterra equation:
\begin{equation}\label{eq:volterra_main}
\psi(t+1) = F(t+1) + \sum_{k=0}^t \psi(k)\mathcal{K}(t-k).
\end{equation}
The \textit{forcing term} $F(t)$ and \textit{kernel} $\mathcal{K}(t)$ are explicit functions that depend on the hyperparameters and the full Hessian spectra (see Section \ref{sec:dynamics} and Appendix~\ref{apx:proof_main_result}).  They transparently reveal that the dynamics of SGD+M and of SGD are truly non-equivalent in that there is no mapping of the hyperparameters which leads them to have the same training dynamics.  We also note that a similar equation appears in the vanishing batch setting \citep{paquette21a}, although in that setting it is a Volterra integral equation, which can be recovered from \eqref{eq:volterra_main} by sending $\zeta\to0$.

An advantage of the exact loss trajectory is that we give a rigorous definition of the large batch and small batch regimes which reflect a transition in the convergence behavior of SGD+M. To do this we introduce 
 the \textit{condition number} $\kappa$,
the \textit{average condition number} 
 $\bar{\kappa}$, and the \textit{implicit conditioning ratio} (ICR) defined as
\begin{equation}\label{eq:condition_numbers}
\begin{gathered}
    \bar{\kappa} \defas \frac{\frac{1}{n}\sum_{j\in[n]} \sigma_j^2}{ \sigma_{\min}^2} <  \frac{\sigma_{\max}^2}{\sigma_{\min}^2} \defas \kappa \\
    \text{and} \quad \text{ICR} \defas \frac{\bar{\kappa}}{\sqrt{\kappa}}.
\end{gathered}
\end{equation}
% We define the \textit{implicit conditioning ratio} (ICR) by
% \begin{equation}\label{eq:large_small_batch}
%  \text{ICR} \defas \frac{\bar{\kappa}}{\sqrt{\kappa}}.
%  %\text{Large Batch} \implies \zeta \gtrsim \frac{\bar{\kappa}}{\sqrt{\kappa}} \quad \text{and} \quad \text{Small Batch} \implies  \zeta \lesssim \frac{\bar{\kappa}}{\sqrt{\kappa}},
% \end{equation}
Here $\sigma_j^2$ are the eigenvalues of the Hessian of the least squares problem with $\sigma_{\max}^2$ and $\sigma_{\min}^2$ the largest and smallest (non-zero) eigenvalues. We refer to the \textit{large batch} regime where $\zeta \geq \text{ICR}$ and the \textit{small batch} regime where $\zeta \leq \text{ICR}$.  In the large batch regime \eqref{eq:condition_numbers}, SGD+M matches the performance of the heavy-ball algorithm: the convergence is linear with rate given by $\mathcal{O}(1/\sqrt{\kappa})$.  In the small batch regime, the performance matches that of SGD, i.e.\ the convergence rate is $\mathcal{O}(\zeta/\overline{\kappa})$. We give matching lower bounds, and we provide momentum and learning rate choices that achieve the claimed performance.  In addition we show there is a \textit{saturating batch fraction} (see Figure~\ref{fig:convergence_rates_vs_batch_sizes}), after which increasing the batch fraction does not improve the rate. It explicitly occurs when $\zeta =$ ICR. Moreover this saturating batch fraction occurs before full batch, i.e.\ $\zeta = 1$. 

\paragraph{Related work.} Recent works have established convergence guarantees for SGD+M in both strongly convex and non-strongly convex setting \citep{flammarion15, sebbouh21}, including almost sure convergence \citep{gadat18}. In the work of \citep{orvieto20}, they used a stochastic differential equations (SDEs) to obtain convergence of SGD+M. Specializing to the setting of minimizing quadratics, \citep{loizou20} demonstrated that the iterates of SGD+M converge linearly (but not in L2) under an exactness assumption.
    
Determining batch size has been an important issue in determining the convergence rate of SGD and SGD+M. There are instances where (small batch size) SGD+M do not necessarily achieve better performances than small batch size SGD (see \citep{zhang19, kidambi2018, paquette21a}). As for mini-batch SGD without momentum, \citep{ma2018} showed that there is a saturating batch size (roughly $\overline{\kappa}/\kappa$) above which increasing the batch size no longer improves the rate. In \citep{de17}, the authors implemented an adaptive (increasing) batch size schedule and they used it to show linear convergence for SGD. For generalization, \citep{smith18} empirically showed that for SGD and SGD+M, instead of decaying the learning rate, one can increase the batch size during training to obtain a similar learning curve. 

SGD+M has been proven to be useful in practical applications as well, including machine learning. \citep{sutskever13} demonstrated that  SGD+M shows an empirical advantage in training deep and recurrent neural networks (DNNs and RNNs respectively). Many authors have proposed that learning rate warmup enables us to scale training efficiently to larger batch sizes (\citep{goyal17, mccandlish18, smith18}).

\section{Setting}\label{sec:motivation}
We consider the least squares problem when the number of samples ($n$) and features ($d$) are large:
 \begin{equation}\label{eq:lsq}
    \argminA_{\xx\in\mathbb{R}^d} \Big\{ f(\xx) = \frac{1}{n}\sum_{i=1}^n f_i(x) \overset{\mathrm{def}}{=} \frac{1}{2}\sum_{i=1}^n (\aa_i \xx - b_i)^2\Big\}, \quad \text{with $\bb \defas \AA \widetilde{\xx} + \eeta$,}
\end{equation}
where $\AA \in \mathbb{R}^{n \times d}$ is a data matrix whose $i$-th row is denoted by $\aa_i \in \mathbb{R}^d$, $\widetilde{\xx} \in \mathbb{R}^d$ is the signal vector, and $\eeta \in \mathbb{R}^n$ is a source of noise. The target $\bb = \AA \widetilde{\xx} + \eeta$ comes from a generative model corrupted by noise. We let $\sigma_1^2\ge \cdots \ge \sigma_n^2 \ge 0$ be the eigenvalues of the matrix $\AA \AA^T$ with $\sigma_{\max}^2$ and $\sigma_{\min}^2$ the largest and smallest (nonzero) eigenvalues.

We apply SGD with momentum (SGD+M) with mini-batches to the finite sum, quadratic problem \eqref{eq:lsq}. SGD+M iterates by selecting uniformly at random a subset 
$B_k \subseteq \{1,2,\cdots,n\}$ of cardinality $\beta$ 
and makes the update
\begin{equation}\label{eq:oursgd}
    \begin{split}
    \xx_{k+1} 
    &= \xx_k - \gamma\sum_{i \in B_k}\nabla f_{i}(\xx_k) + \Delta (\xx_k - \xx_{k-1})\\
    &= \xx_k - \gamma\AA^T \PP_k (\AA\xx_k - \bb) + \Delta (\xx_k - \xx_{k-1}), \quad
    \text{where} \quad \PP_k \defas \sum_{i \in B_k} \mathbb{e}_{i}\mathbb{e}_{i}^T,
    \end{split}
\end{equation}
with $\PP_k$ a random orthogonal projection matrix and $\mathbb{e}_i$ the $i$-th standard basis vector. Here %the cardinality of $|B_k|$ is always $\beta \in \N$ is the batch-size parameter satisfying  $\frac{\beta}{n} = \zeta >0$, 
$\gamma > 0$ is the learning rate parameter, $\Delta$ is the momentum parameter, and the function $f_i$ is the $i$-th element of the sum in \eqref{eq:lsq}.

When the stochastic gradient in \eqref{eq:oursgd} is replaced with the full-gradient $\nabla f(\xx)$, the resulting algorithm with learning rate and momentum optimally chosen yields the celebrated algorithm, heavy-ball momentum (a.k.a. Polyak momentum) \citep{Polyak1962Some}. The optimal learning rate and momentum parameters are explicitly given by
\begin{equation} \label{eq:polyak_para}
    \gamma = \frac{4}{(\sqrt{\sigma_{\max}^2} + \sqrt{\sigma_{\min}^2})^2} \quad \text{and} \quad \Delta = \left ( \frac{\sqrt{\sigma_{\max}^2} - \sqrt{\sigma_{\min}^2}}{\sqrt{\sigma_{\max}^2} + \sqrt{\sigma_{\min}^2} } \right )^2.
\end{equation}
 It is well-known that heavy-ball is an optimal algorithm on the least squares problem in that it converges linearly at a rate of $\mathcal{O}(1/\sqrt{\kappa})$ (see \citep{pedregosa2021residual}). 

 In this paper, we adhere whenever possible to the following notation. We denote vectors in lowercase boldface $(\xx)$ and matrices in upper boldface $(\AA)$. The entries of a vector (or matrix) are denoted by subscripts. Unless otherwise specified, the norm $\|\cdot\|_2$ is taken to be the standard Euclidean norm if it is applied to a vector and the operator 2-norm if it is applied to a matrix.

\subsection{Random least squares problem}\label{sec:random_lsp}
\begin{wrapfigure}[40]{r}{0.5\textwidth}
%\vspace{-0.4cm}
\includegraphics[width = .9\linewidth]{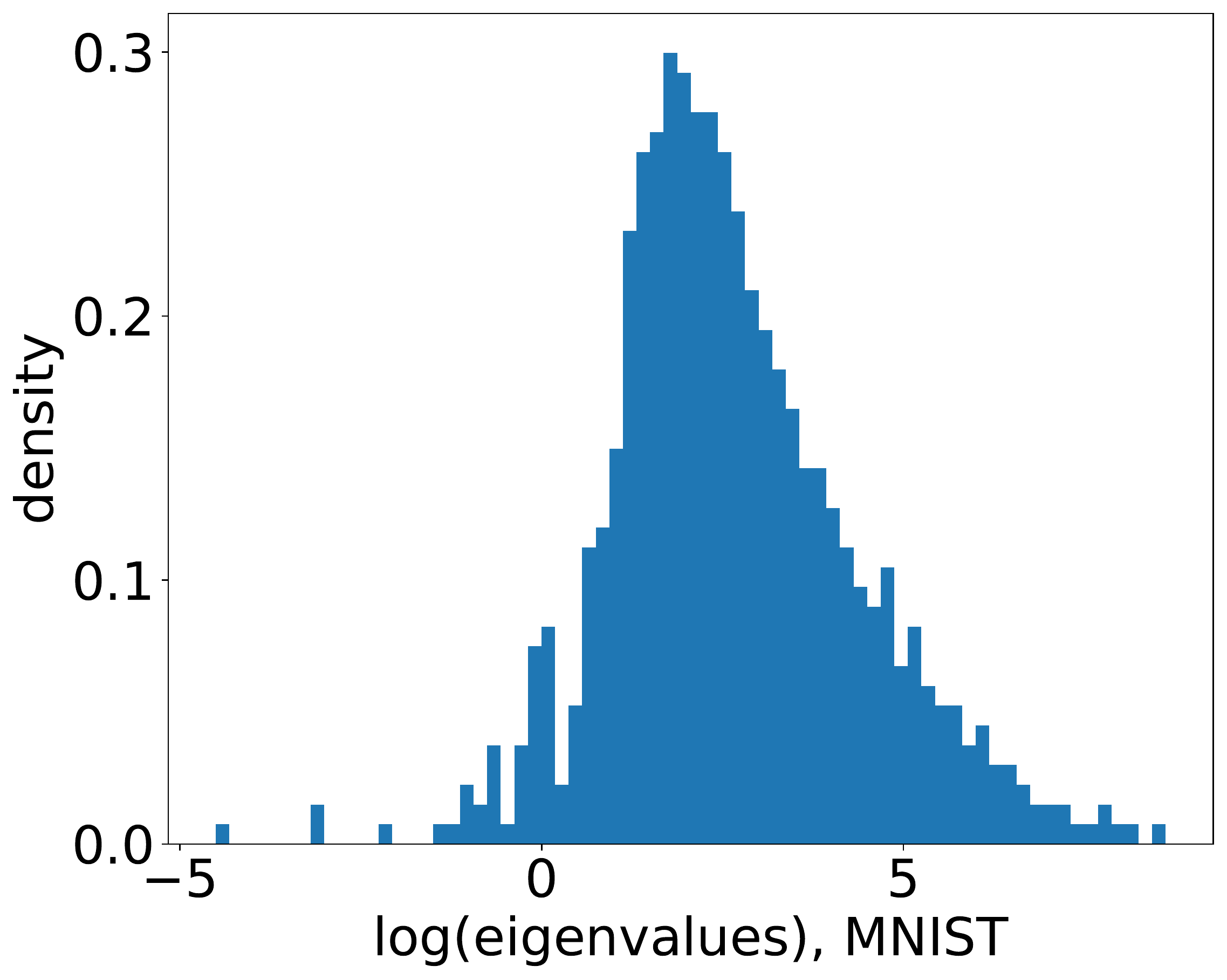}
%\vspace{-1cm}
\includegraphics[width = 1.0\linewidth]{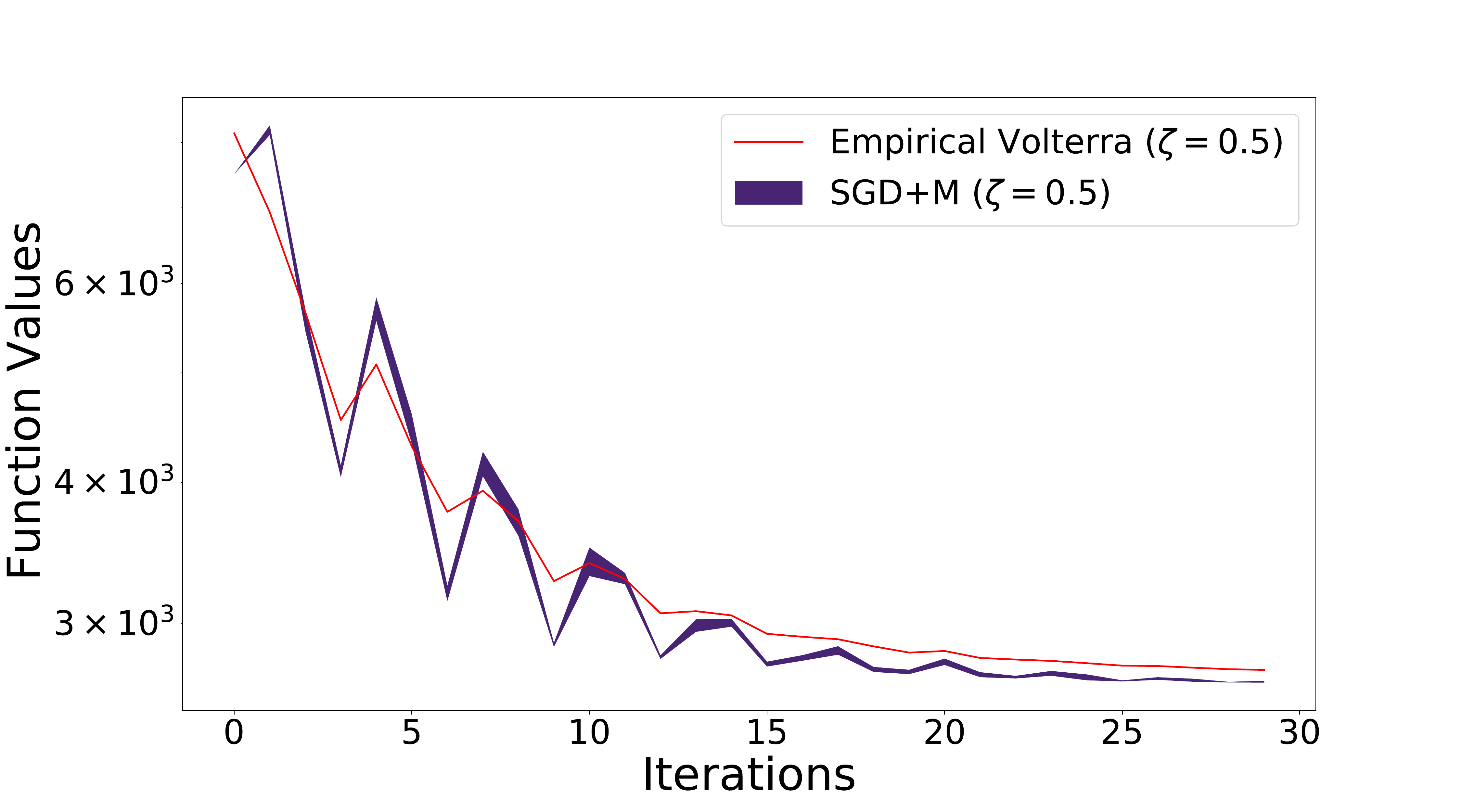}
\vspace{-0.5cm}
 \caption{{\bfseries SGD+M vs. Theory on even/odd MNIST.} 
MNIST ($60,000 \times 28 \times 28$ images) \citep{lecun2010mnist} is reshaped into a single matrix of dimension $60,000 \times 784$ (preconditioned to have centered rows of norm-1), representing 60,000 samples of 10 digits. The target $\bb$ satisfies $b_i = 0.5$ if the $i^{th}$ sample is an odd digit and $b_i = -0.5$ otherwise. SGD+M was run $10$ times with $(\Delta = 0.8, \gamma = 0.001, \zeta = 0.5)$ and the empirical Volterra was run once with $(R=11,000$, $\tilde{R} = 5300)$. The $10^{th}$ to $90^{th}$ percentile interval is displayed for the loss values of 10 runs of SGD+M. While MNIST data set does not satisfy our eigenvalue assumption on the data matrix, the solution to the Volterra equation on MNIST data set captures the dynamics of SGD+M. See App.~\ref{apx:app_numerical_simulations} for more details.}
\label{fig:mnist_motivation}
\end{wrapfigure}

%\textcolor{red}{Put the main concentration theorem in this section}
To perform our analysis we make the following explicit assumptions on the signal $\widetilde{\xx}$, the noise $\eeta,$ and the data matrix $\AA.$
\begin{assumption}[Initialization, signal, and noise] \label{assumption: Vector} The initial vector $\xx_0 \in \RR^d$ is chosen so that $\xx_0 - \widetilde{\xx}$ is independent of the matrix $\AA$. The noise vector $\eeta \in \RR^n$ is centered and has i.i.d. entries, independent of $\AA$. The signal and noise are normalized so that 
\[
    \mathbb{E}\|\xx_0 - \widetilde{\xx}\|^2_2 = R\frac{d}{n},\quad \text{and}\quad \E[\|\eeta\|_2^2] = \widetilde{R}.
\]
\end{assumption}

Next we state assumptions on the data matrix $\AA$ as well as its eigenvalue and eigenvector distribution. Each row $\aa_i \in \mathbb{R}^{d\times 1}$ is centered and is normalized so that $\E \|\aa_i\|^2_2 =1$ for all $i$. We suggest as a central example the \textit{Gaussian random least squares} setup where each entry of $\AA$ is sampled independently from a standard normal distribution with variance $\frac{1}{d}$.

 %Let also $\sigma_{\max}^2$ and $\sigma_{\min}^2$ be the \textit{nonzero} maximum and minimum eigenvalues of $\HH$, respectively. 
% Note that up to appending zeros, this is the same ordered sequence of eigenvalues as those of the Hessian. 

\begin{assumption}[Orthogonal invariance]\label{assumption: spectral_density} 
Let $\AA$ be a random $n \times d$ matrix. Suppose these random matrices satisfy a left orthogonal invariance condition: Let $\OO \in \mathbb{R}^{n \times n}$ be an orthogonal matrix. Then the matrix $\AA$ is orthogonally \textit{left invariant} in the sense that 
    \begin{equation} \label{eq:ortho_invariance}
    %\AA \UU \law \AA \quad \text{and} \quad 
    \OO \AA \law \AA.
    \end{equation}
\end{assumption}
This assumption implies that the left singular vectors of $A$ are uniformly distributed on the sphere which is the strongest form of eigenvector delocalization; many distributions of random matrices including some sparse ones (such as random regular graph adjacency matrices) are known to have some form of eigenvector delocalization. The classic example of a random matrix which has left orthogonal invariance is the sample covariance matrix, $\ZZ \sqrt{\SSigma},$ for an i.i.d. Gaussian matrix $\ZZ$ and any covariance matrix $\SSigma$.  Numerical simulations suggest that \eqref{eq:ortho_invariance} can be weakened in that the theory herein can be applied to other ensembles without this orthogonal invariance property. See Figure \ref{fig:mnist_motivation}.

\section{Deterministic Dynamical Equivalent of SGD+M} \label{sec:dynamics}

With these assumptions, we can give an explicit representation of the loss values on a least squares problem at the iterates generated by SGD+M algorithm. We show in this section (see Theorem~\ref{thm: concentration_main}): for any $T > 0$,
\[ 
\sup_{0 \le t \le T} | f(\xx_t) - \psi(t)| \to 0 \quad \text{in probability,}\]
where $\psi$ solves \eqref{eq:volterra_main}.
We begin by discussing the forcing and the noise terms of $\psi(t)$ and their relationship to SGD+M. 

%The trajectory of the function values under SGD+M converge to a deterministic function $\psi(t)$ which depends on two terms -- the \textit{forcing term} $F(t)$ and a discrete time convolution $\sum_{k=0}^t \mathcal{K}(t-k) \psi(k)$. On one hand, the forcing term captures the expected behavior of SGD+M. 

\paragraph{Forcing term: problem instance information.} 
The forcing term represents the mean (with respect to expectation over the mini-batches) behavior of SGD+M and, in fact, it can be connected directly to the behaviour of heavy ball (Section~\ref{sec:motivation}).
For a \textit{small} learning rate $\gamma$, the forcing term $F(t)$ in \eqref{eq:volterra_main} governs the dynamics of $\psi(t)$. To analyze the forcing term $F(t)$, we need to solve a two step recurrence for the iterations of SGD+M given by $\eqref{eq:oursgd}$. Let $\ww_t \defas \AA\xx_t - \bb$ and $\tilde{\mathcal{X}}_{t,j} \defas (w_{t,j}^2\ w_{t-1,j}^2\ w_{t,j}w_{t-1,j})^T$. To this end, for each $j\in[n]$, we derive a matrix recurrence as follows:
\begin{equation}\label{eq:W_system}
\begin{gathered}
% \begin{pmatrix}
% w_{t+1,j}^2 \\
% w_{t,j}^2 \\
% w_{t+1,j}w_{t,j}
% \end{pmatrix} = \underbrace{\begin{pmatrix}
% \Omega_j^2 &\Delta^2 &-2\Delta\Omega_j\\
% 1 & 0 & 0\\
% \Omega_j & 0 & -\Delta 
% \end{pmatrix}}_{\defas \MM_j}
% \begin{pmatrix}
% w_{t,j}^2 \\
% w_{t-1,j}^2 \\
% w_{t,j}w_{t-1,j}
% \end{pmatrix}+
% \text{(Error)}, \\
% \text{where} \quad \ww_t \defas \AA \xx_t - \bb \quad \text{and} \quad 
% \Omega_j \defas 1-\gamma\zeta \sigma_j^2 + \Delta. %\quad \textcolor{red}{say something about error?}.
\tilde{\mathcal{X}}_{t+1,j} = \MM_j \tilde{\mathcal{X}}_{t,j} + (\text{Error})
\end{gathered}
\end{equation}
(See Appendix~\ref{subsec: f_dynamics} for more detail). Intuitively, the forcing term at iteration $t$ is given by applying a linear recurrence $\MM_j$ operator $t-1$ times on a vector containing initialization information at each $j\in[n]$ and then summed up for the first coordinate. Explicitly, the forcing term is the first coordinate of the quantity,
\begin{equation}\label{eq: forcingIntuition}
%     F(t) = \sum_{j=1}^n \MM_j^t \cdot (w_{1,j}^2 \quad 
% w_{0,j}^2 \quad 
% w_{1,j}w_{0,j})^T
F(t) \approx \left( \sum_{j=1}^n \MM_j^{t-1}
\tilde{\mathcal{X}}_{1,j} \right)_1.
\end{equation}
%where $\MM_{j}$ is the operator in \eqref{eq:W_system} (See Appendix~\ref{subsec: f_dynamics} for more details). 
By \eqref{eq: forcingIntuition}, it is clear that the \textit{maximum} of the eigenvalues of the operator $\MM_j$ is essential to analyze the convergence behavior of $F(t).$  Let 
$\lambda_{2,j}$ be the eigenvalue of $\MM_j$ with the biggest modulus and let
% The eigenvalues of $\MM_j$ are given as 
% \begin{equation}\label{eq:lambdaMax}
% \begin{gathered}
%     \lambda_{1,j} = \Delta, \quad
%     \lambda_{2,j} = \frac{-2\Delta + \Omega_j^2 + \sqrt{\Omega_j^2(\Omega_j^2 - 4\Delta)}}{2}, \quad \lambda_{3,j} = \frac{-2\Delta + \Omega_j^2 - \sqrt{\Omega_j^2(\Omega_j^2 - 4\Delta)}}{2}. 
% \end{gathered}
% \end{equation}
% Note that $\lambda_{2,j}$ and $\lambda_{3,j}$ are either real-valued or complex-valued depending on the choice of hyperparameters $(\Delta, \gamma, \zeta)$. For a $j\in [n]$, when $\Omega_j - 4\Delta \ge 0$, we can verify that $\lambda_{2,j} \ge \Delta \ge \lambda_{3,j}$. If not, the modulus of all three eigenvalues is $\Delta$ (i.e. $|\lambda_{k,j}| = \Delta$ for $k = 1,2,3$). Therefore, the rate of decay of $F(t)$ is given by the maximum of the eigenvalues of $\MM_j, j\in[n]$, which we denote as 
\begin{equation} \label{eq:lambda_2_max}
    \lambda_{2,\max} \defas \max_{j} |\lambda_{2,j}|.
\end{equation}
% which occurs at the $j$ corresponding to the smallest eigenvalue of $\AA \AA^T$. 
(See \eqref{eq:lambda_values} for an explicit formula of $\lambda_{2,j}$ and its maximum.) Further analysis (see Appendix~\ref{apx:proof_main_result}) shows that we can rewrite $\eqref{eq: forcingIntuition}$ as 
\begin{equation}\label{eq : forcingDecomposition}
\begin{gathered}
    F(t) = \frac{R}{2}h_1(t) + \frac{\widetilde{R}}{2}h_0(t),
\end{gathered}
\end{equation}
where $h_0, h_1$ are functions depending on the eigenvalues of $\AA\AA^T$ with decaying rate $\lambda_{2,\max}$. Therefore we can conclude that $F(t) = \mathcal{O}(\lambda_{2,\max}^t)$.

\paragraph{Kernel term: noise from the algorithm.} 
% Our analysis shows that the discrete time convolution in \eqref{eq:volterra_main} can be written as 
The convolution term in \eqref{eq:volterra_main} is due to the inherent stochasticity of SGD+M. More specifically, it is given by  
\begin{equation}\label{eq : kernelDecomposition}
\begin{gathered}
    \gamma^2\zeta(1-\zeta)\sum_{k=0}^t H_2(t-k) \psi(k),
    % \\
    % \text{where} \quad H_2(t) = \frac{1}{n}\sum_{j=1}^n \frac{2\sigma_j^4}{\Omega_j^2 -4\Delta} \Big(-\Delta^{t+1} + \frac{1}{2}\lambda_{2,j}^{t+1} + \frac{1}{2}\lambda_{3,j}^{t+1}\Big).
\end{gathered}
\end{equation}
where $H_2$ is another function of eigenvalues of $\AA\AA^T$ with decaying rate $\lambda_{2,\max}$.  The presence of $\psi$ (training loss) is due to the fact that the noise generated by the $k$-th stochastic gradient is proportional to $\psi(k)$ (training loss), and the function $H_2(t-k)$ represents the progress of the algorithm in sending this extra noise to $0$. Observe \eqref{eq : kernelDecomposition} scales quadratically in the learning rate $\gamma.$ Hence for \textit{large} learning rates, \eqref{eq : kernelDecomposition} dominates the decay behaviour of $\psi$. Further details discussed in Section~\ref{subsec:complexity_analysis}.

We now state the main result:
\begin{theorem}[Concentration of SGD+M] \label{thm: concentration_main} Suppose Assumptions~\ref{assumption: Vector} and \ref{assumption: spectral_density} hold with the learning rate  $\gamma < \frac{1+\Delta}{\zeta \sigma_{max}^2}$ and the batch size satisfies $\beta/n = \zeta$ for some $\zeta >0$. Let the constant $T \in \mathbb{N}$. Then there exists $C >0$ such that for any $c >0$, there exists $D>0$ satisfying
% and \textcolor{red}{any $D>0$}, there is a $C>0$ such that (\textcolor{red}{Condition on $\alpha$})
\begin{equation} \label{eq:probability_convergence}
    \Pr \bigg [  \sup_{0 \le t \le T, t\in\mathbb{N}} |f(\xx_t)-\psi(t)| > n^{-C} \bigg ] \le Dn^{-c},
\end{equation}
% \begin{equation}
%   \sup_{0 \le t \le T}  \big | f \big ( \xx_{ %\lfloor  \frac{n}{\beta}t \rfloor 
%   t} \big ) -  \psi(t) \big | \Prto[n] 0\,,
% \end{equation}
for sufficiently large $n\in \mathbb{N}$. The function $\psi$ is the solution to the Volterra equation 
\begin{equation} \begin{gathered} \label{eq:Volterra_eq_main_1}
    % \psi_0(t+1) = \frac{rR}{2} h_1(t+1) + \frac{\tilde{R}}{2} (rh_0(t+1) + \textcolor{red}{(1-r)}) + \sum_{k=0}^t 
    % r\gamma^2\zeta(1-\zeta)H_2(t-k) \psi_0(k).\\
    \psi(t+1) = \underbrace{\frac{R}{2} h_1(t+1) + \frac{\widetilde{R}}{2} h_0(t+1) }_{\text{forcing}}+ \underbrace{\sum_{k=0}^t 
    \gamma^2\zeta(1-\zeta)H_2(t-k) \psi(k)}_{\text{noise}},\quad 
    \psi(0) = f(\xx_0).
\end{gathered}
\end{equation}

\end{theorem}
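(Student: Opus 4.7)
The plan is to reduce the problem to an eigencoordinate system, derive the exact stochastic recurrence for the rank-one quadratic observables, identify the deterministic part of that recurrence through batch expectation, and then close a bootstrap argument using martingale concentration over $t \le T$ iterations. First, because of Assumption~\ref{assumption: spectral_density}, I can diagonalize $\AA\AA^T = \UU \SSigma^2 \UU^T$ where $\UU$ is Haar-distributed on $O(n)$ and independent of the singular values. Writing $\ww_k = \AA\xx_k - \bb$ and $\hat\ww_k = \UU^T \ww_k$, the SGD+M step yields $\ww_{k+1} = \ww_k - \gamma \AA\AA^T \PP_k \ww_k + \Delta(\ww_k - \ww_{k-1})$, and the loss becomes $f(\xx_k) = \tfrac{1}{2}\|\hat\ww_k\|_2^2 = \tfrac{1}{2}\sum_j (\tilde{\mathcal{X}}_{k,j})_1$. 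Moreover, orthogonal invariance implies that $\UU^T \bb$ and $\UU^T \AA(\xx_0-\widetilde\xx)$ have, conditionally on the spectrum, isotropically distributed directions with the right second moments $\widetilde R$ and $R$, which will later match the coefficients of $h_0$ and $h_1$.

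Next I would compute the one-step recurrence for $\tilde{\mathcal{X}}_{k,j}$ exactly, as indicated in \eqref{eq:W_system}. Conditioning on the filtration $\mathcal{F}_k$ generated by the batches $B_0,\dots,B_{k-1}$, I would use the two identities $\E[\PP_k\mid \mathcal{F}_k] = \zeta\, \II$ and $\E[\PP_k \XX \PP_k\mid \mathcal{F}_k] = \zeta^2 \XX + \zeta(1-\zeta)\,\diag(\XX)$ (up to the $O(1/n)$ correction for sampling without replacement). The first identity produces the linear operator $\MM_j$ acting on $\tilde{\mathcal{X}}_{k,j}$ and accounts for the forcing term \eqref{eq: forcingIntuition}; the second identity produces an extra diagonal contribution that scales like $\gamma^2 \zeta(1-\zeta) \sigma_j^2 \,\tfrac{1}{n}\sum_\ell \sigma_\ell^2 \hat w_{k,\ell}^2 = \gamma^2 \zeta(1-\zeta)\sigma_j^2 \cdot (\text{loss-like quantity})$. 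Summing the deterministic part over $j$ with appropriate weights gives precisely the discrete convolution $\sum_{k} \gamma^2\zeta(1-\zeta) H_2(t-k) \psi(k)$, with $H_2$ the Laplace-type functional of $\MM_j^{t-k}$ against the empirical spectrum.

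For the stochastic deviation from this deterministic recursion, I would treat the martingale difference $\tilde{\mathcal{X}}_{k+1,j} - \E[\tilde{\mathcal{X}}_{k+1,j}\mid\mathcal{F}_k]$ as the increment to bound. These increments are quadratic in $\hat\ww_k$ with coefficients of size $O(\gamma \sigma_j^2)$; under the spectral assumptions and on the event that $\|\ww_k\|_2^2$ is $O(n)$, each increment has conditional subexponential tails on the scale $n^{-1/2}$ times polylog factors. To handle the fact that the iterates themselves are random, I would run a bootstrap/stopping-time argument: define $\tau = \inf\{k : |f(\xx_k)-\psi(k)| > n^{-C}\}$ and apply Azuma/Freedman-type inequalities on the stopped process. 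A union bound over the finite horizon $t\in\{0,\ldots,T\}$ together with a discrete Gr\"onwall step (exploiting that the kernel $\K$ is bounded and the forcing is bounded on $[0,T]$) lets me show that the stopping time exceeds $T$ with probability $1 - D n^{-c}$, which is exactly \eqref{eq:probability_convergence}.

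The main obstacle will be the concentration step, because the observables $\tilde{\mathcal{X}}_{k,j}$ depend on the full history of batches through $\hat\ww_k$, so the martingale increments are not uniformly bounded a priori. I expect to need either a truncation argument (replacing $\ww_k$ by a bounded surrogate on a high-probability event) or a Freedman-type bernstein inequality with a conditional variance bound, plus a careful accounting of how the $n$-dependent prefactors propagate through the $T$-step convolution. A secondary technical point, but one I expect to be routine, is matching the summed spectral quantities $\tfrac{1}{n}\sum_j g(\sigma_j^2) \MM_j^{t}$ to the functions $h_0, h_1, H_2$ by using that $\MM_j$ is a $3\times 3$ matrix whose eigenvalues $\lambda_{1,j}, \lambda_{2,j}, \lambda_{3,j}$ admit closed forms in $\sigma_j^2, \gamma, \Delta, \zeta$, so every $h_i(t)$ reduces to a sum of $\lambda_{\cdot,j}^t$ weighted by spectral data, exactly as in \eqref{eq : forcingDecomposition}.
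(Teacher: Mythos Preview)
Your overall strategy matches the paper's: pass to spectral coordinates, track the $3\times 3$ recurrence $\tilde{\mathcal{X}}_{k+1,j} = \MM_j\tilde{\mathcal{X}}_{k,j} + \tilde{\mathcal{Y}}_{k,j}$, isolate the mean of $\tilde{\mathcal{Y}}_{k,j}$ to build the forcing and convolution terms, and then control the fluctuations with a stopping-time argument over $t\le T$. However, two steps in your proposal would not close as written.

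\textbf{Closure of the convolution on $\psi$.} After the batch expectation, the extra contribution at mode $j$ is
\[
\gamma^2\sigma_j^4\,\zeta(1-\zeta)\sum_{i\in[n]} U_{ij}^2\,(\UU\hat\ww_k)_i^2,
\]
not $\gamma^2\zeta(1-\zeta)\sigma_j^2\cdot\tfrac{1}{n}\sum_\ell \sigma_\ell^2\hat w_{k,\ell}^2$. (The projector $\PP_k$ is diagonal in the standard basis, not the spectral one, so $\diag(\cdot)$ does not produce a $\sigma_\ell^2$-weighted sum.) To turn this into $(2/n)f(\xx_k)$ one must replace $U_{ij}^2$ by $1/n$; this is a separate concentration step---the paper's ``Key Lemma,'' using Lipschitz concentration on the orthogonal group---and generates its own error $\EE_{KL}$. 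Without it you do not get a closed equation in the loss, and the Volterra equation \eqref{eq:Volterra_eq_main_1} would not emerge.

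\textbf{Concentration of the martingale increments.} Your claim that, on the event $\{\|\ww_k\|^2=O(n)\}$, each increment has subexponential tails at scale $n^{-1/2}$ is not correct. The second-order martingale error is a quadratic form $\XX_k^T\DD\XX_k-\E[\XX_k^T\DD\XX_k\mid\mathcal{F}_k]$ with $\XX_k=\PP_k(\UU\hat\ww_k)$ and $\|\DD\|_{HS}^2=O(n)$. A Hanson--Wright bound for sampling without replacement gives concentration at scale $K^2\|\DD\|_{HS}$, where $K=\max_i|(\UU\hat\ww_k)_i|$. Norm control alone yields $K=O(1)$ and hence a useless $O(n^{1/2})$ scale; what is actually needed is \emph{delocalization}, namely $\max_i|(\UU\hat\ww_k)_i|=O(n^{\alpha-1/2})$ uniformly for $k\le T$. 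The paper proves this by induction on $k$ (its Lemma~\ref{lemma:uwt}), using a Bernstein inequality for sampling without replacement at each step and Haar concentration for the base case. With delocalization in hand, the martingale error is then split into a first-order piece controlled by the Bernstein inequality of \citet{bardenet15} and a quadratic piece controlled by the Hanson--Wright inequality of \citet{adamczak15}; generic Azuma/Freedman on the batch filtration does not deliver the $n^{-1/2}$ scale. Correspondingly, the paper's stopping time is $\vartheta=\inf\{t:\|\ww_t\|>n^\theta\}$ (whose a priori control is deterministic under the learning-rate condition), rather than a stopping time on $|f(\xx_t)-\psi(t)|$; you need the iterate bound \emph{before} you can run the concentration, not as a consequence of it.
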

For a more accurate description on $h_0,h_1,H_2$ as well as the proof of Theorem~\ref{thm: concentration_main} and Corollary~\ref{cor: sgdnomomentum}, see Appendix~\ref{apx:proof_main_result}. The expression of $\psi$ highlights how the algorithm, learning rate, batch size, momentum, and noise levels interact with each other to produce different dynamics.
%In particular, $\lambda_{2,j}, \lambda_{3,j}$ for $j\in[n]$ can be viewed as the \textit{spectrum} that determines the dynamics of SGD+M. Note that they incorporate learning rate, batch size, momentum, and the original spectrum of the covariance matrix $\HH$. 
Note that the learning rate assumption will be necessary for the solution to the Volterra equation to be convergent, see Proposition \ref{prop:kernel_norm}. When $\Delta \to 0$, we obtain the Volterra equation for SGD with mini-batching.

\begin{corollary}[Concentration of SGD, no momentum]\label{cor: sgdnomomentum}
Under the same setting as Theorem \ref{thm: concentration_main} and when $\Delta = 0$, the function values $f(\xx_t)$ converge to $\psi(t)$ as in \eqref{eq:probability_convergence} where now the limit $\psi$ is a solution to the Volterra equation
\begin{equation} \begin{gathered} \label{eq:Volterra_eq_nomomentum}
    \psi(t+1) = \frac{R}{2} h_1(t+1) + \frac{\widetilde{R}}{2} h_0(t+1) + \sum_{k=0}^t 
    \gamma^2\zeta(1-\zeta)h_2(t-k) \psi(k).\\
\end{gathered}
\end{equation}
 where for $k=0,1,2$, 
\[
\begin{aligned}
    h_k(t) &= \frac{1}{n}\sum_{j=1}^n \sigma_j^{2k} (1-\gamma\zeta\sigma_j^2)^{2t}.
\end{aligned}
\]
\end{corollary}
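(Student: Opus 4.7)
The plan is to obtain the corollary directly from Theorem~\ref{thm: concentration_main} by specializing to $\Delta = 0$. The learning-rate hypothesis $\gamma < (1+\Delta)/(\zeta \sigma_{\max}^2)$ reduces to $\gamma < 1/(\zeta \sigma_{\max}^2)$, so Theorem~\ref{thm: concentration_main} applies verbatim and the concentration estimate \eqref{eq:probability_convergence} transfers without further probabilistic work. What remains is to verify that, at $\Delta = 0$, the general forcing $F(t)$ and kernel $H_2$ collapse to the explicit closed forms built from $h_0$, $h_1$, $h_2$ as claimed.

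Without momentum, the residual $\ww_t \defas \AA\xx_t - \bb$ obeys the one-step recursion $\ww_{t+1} = (\II - \gamma \AA\AA^T \PP_t)\ww_t$, so the auxiliary three-dimensional state $\tilde{\mathcal{X}}_{t,j} = (w_{t,j}^2, w_{t-1,j}^2, w_{t,j}w_{t-1,j})^T$ from \eqref{eq:W_system} becomes triangular: the $w_{t,j}^2$ coordinate evolves independently of the other two, and only this coordinate feeds into $f(\xx_t) = \tfrac12 \sum_j w_{t,j}^2$. On this coordinate the matrix $\MM_j$ degenerates to the scalar multiplier $(1-\gamma\zeta \sigma_j^2)^2$, so $\MM_j^{t-1}$ in \eqref{eq: forcingIntuition} acts simply as multiplication by $(1-\gamma\zeta\sigma_j^2)^{2(t-1)}$ in the entry feeding into $F(t)$.

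To extract $F(t)$, I would pass to the eigenbasis of $\AA\AA^T$ and use that its left singular vectors are Haar-distributed (Assumption~\ref{assumption: spectral_density}); combined with Assumption~\ref{assumption: Vector} this yields an expected initial squared mass on mode $j$ of $R\sigma_j^2/n$ (signal) plus $\widetilde R / n$ (noise). Multiplying by $(1-\gamma\zeta\sigma_j^2)^{2t}$ and summing over $j$ produces exactly $\frac{R}{2} h_1(t) + \frac{\widetilde R}{2} h_0(t)$. For the kernel, the same decoupling shows that the fluctuation in $w_{t+1,j}^2$ contributes a term proportional to $\gamma^2 \zeta(1-\zeta)\sigma_j^4 \|\ww_t\|_2^2/n = 2 \gamma^2 \zeta(1-\zeta) \sigma_j^4 f(\xx_t)/n$, coming from the Haar/Weingarten computation used to build $H_2$ in Appendix~\ref{apx:proof_main_result}, but with the cross terms involving $\ww_{t-1}$ now absent. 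Propagating this at rate $(1-\gamma\zeta\sigma_j^2)^{2(t-k)}$ and summing in $j$ yields the convolution kernel $\gamma^2 \zeta(1-\zeta) h_2(t-k)$.

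The only real obstacle is algebraic bookkeeping: verifying that $\MM_j$ degenerates to the claimed scalar form at $\Delta=0$, and that the Weingarten-type identities used in the proof of Theorem~\ref{thm: concentration_main} specialize to the diagonal expressions for $h_0,h_1,h_2$ once the $(w_{t-1,j}^2, w_{t,j}w_{t-1,j})$-block is inert. Since the concentration in \eqref{eq:probability_convergence} is inherited directly from the parent theorem, no fresh martingale or high-dimensional probability argument is needed.
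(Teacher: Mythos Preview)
Your proposal is correct and follows the same route as the paper: inherit the concentration estimate directly from Theorem~\ref{thm: concentration_main}, then verify the Volterra data simplify at $\Delta=0$. The paper's own argument is slightly more economical: rather than revisiting the $\MM_j$ recursion or the Weingarten computation, it just substitutes $\Delta=0$ into the explicit closed-form expressions for $h_0,h_1,H_2$ (given in the detailed version of Theorem~\ref{thm: concentration_main} in Appendix~\ref{apx:proof_main_result}), where $\Omega_j=1-\gamma\zeta\sigma_j^2$, $\lambda_{2,j}=\Omega_j^2$, $\lambda_{3,j}=0$, $\kappa_{2,j}=\Omega_j$, $\kappa_{3,j}=0$, and the formulas collapse immediately to $h_k(t)=\tfrac{1}{n}\sum_j \sigma_j^{2k}(1-\gamma\zeta\sigma_j^2)^{2t}$.
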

\textbf{Remark.} Note that $H_2(t)$ reduces to $h_2(t)$ in $\Delta = 0$ case. Also when the limit $\zeta \to 0$ and when we scale time by $t/\zeta$, we have that $(1-\gamma\zeta\sigma_j^2)^{2t/\zeta} \to e^{-2\gamma\sigma_j^2t}$. This coincides with the result from \citep[Theorem 1]{paquette21a}. Indeed, this shows not only how our dynamics of SGD+M includes the no momentum case (i.e. SGD), but also how the dynamics of SGD+M differ from SGD.

\section{Convolution Volterra analysis} \label{sec:convolution_volterra_analysis}

%\textcolor{red}{Put the theorem on values of $\Delta$ and $\gamma$ for convergence. Talk about kernel norm needs to be $< 1$, etc.}
In this section, we outline how to utilize the Volterra equation \eqref{eq:Volterra_eq_main_1} to a produce complexity analysis of SGD+M.  For additional details and proofs in this section, see Appendix \ref{apx:proof_main_results}. 

We begin by establishing sufficient conditions for the convergence of the solution to the Volterra equation \eqref{eq:Volterra_eq_main_1}.  Our Volterra equation can be seen as the \textit{renewal equation} (\citep{asmussen03}). Let us translate \eqref{eq:Volterra_eq_main_1} into the form of the renewal equation as follows:
\begin{equation}\label{eq: renewal_eq}
    \psi(t+1) = F(t+1) + (\K\ast\psi)(t),
\end{equation}
where $(f \ast g)(t)=\sum_{k=0}^\infty f(t-k)g(k)$.  Let the \textit{kernel norm} be $\|\K\| = \sum_{t=0}^\infty \K(t)$. By \citep[Proposition 7.4]{asmussen03}, we see that $\|\K\| < 1$ is necessary for our solution to the Volterra equation to be convergent. Indeed, we have the following result.

\begin{prop}\label{prop:psi_limit}
If the norm $\|\K\| < 1$, the algorithm is convergent in that 
% \textcolor{red}{(what happens if $r=1$?)}
\begin{equation}\label{eq:psi_limit}
\psi(\infty) \defas \lim_{t\to\infty} \psi(t) = 
%\mathbbm{1}_{\{r<1\}}\cdot 
\frac{\frac{\widetilde{R}}{2}(\max\{1-\tfrac{d}{n},0\})}{1-\|\K\|}.
\end{equation}
\end{prop}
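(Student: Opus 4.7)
The plan is to pass to the limit $t\to\infty$ on both sides of the renewal-form equation \eqref{eq: renewal_eq} and read off $\psi(\infty)$ from the resulting algebraic identity. Because $\|\K\|<1$, classical renewal theory (\citep[Proposition 7.4]{asmussen03}) guarantees that the unique bounded solution is $\psi=\sum_{k\ge 0}\K^{\ast k}\ast F$, and that this solution has a limit whenever $F$ and the partial kernel sums converge. The two ingredients needed are therefore (i) the identification of $F(\infty)\defas\lim_t F(t)$, and (ii) the passage to the limit in the convolution.

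For (i), use the decomposition \eqref{eq : forcingDecomposition} together with the representation \eqref{eq: forcingIntuition} of the $h_k$'s in terms of the matrices $\MM_j^{t-1}$ acting on the initial state. For any eigenvalue $\sigma_j^2>0$ of $\AA\AA^T$, the learning-rate hypothesis $\gamma<(1+\Delta)/(\zeta\sigma_{\max}^2)$ forces the spectral radius of $\MM_j$ to be strictly less than one, so that contribution decays geometrically. For $\sigma_j^2=0$, the SGD+M update restricted to the corresponding eigendirection of $\AA\AA^T$ degenerates to the pure-momentum recursion $w_{t+1}=(1+\Delta)w_t-\Delta w_{t-1}$, whose characteristic roots are $1$ and $\Delta$; the only non-decaying mode is the constant one. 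Since every summand of $h_1$ carries an extra factor $\sigma_j^2$, the zero-eigenvalue contribution to $h_1$ vanishes and $h_1(t)\to 0$. The number of zero eigenvalues of $\AA\AA^T$ is $(n-d)_+$ (generically under Assumption \ref{assumption: spectral_density}), so $h_0(t)\to\max\{1-d/n,0\}$ and $F(\infty)=\tfrac{\widetilde R}{2}\max\{1-d/n,0\}$.

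For (ii), define $c\defas F(\infty)/(1-\|\K\|)$, which is the unique constant fixed point of the equation in the sense that $c=F(\infty)+c\|\K\|$. Setting $\phi\defas\psi-c$ and substituting into \eqref{eq: renewal_eq} yields $\phi(t+1)=\tilde F(t+1)+(\K\ast\phi)(t)$ with $\tilde F(t+1)\defas F(t+1)-F(\infty)+c\bigl(\sum_{j=0}^t\K(j)-\|\K\|\bigr)$. Both contributions to $\tilde F$ vanish as $t\to\infty$, and because $\|\K\|<1$ the renewal measure $U\defas\sum_{k\ge 0}\K^{\ast k}$ has total mass $(1-\|\K\|)^{-1}<\infty$. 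Writing $\phi=U\ast\tilde F$ and invoking the standard fact that a summable kernel convolved with a sequence tending to zero tends to zero (a split-at-$N$ argument), we get $\phi(t)\to 0$, and hence $\psi(t)\to c$, which is exactly the claimed expression. The main obstacle is step (i): precisely pinning down the constant that the zero-eigenvalue block of $\MM_j$ contributes to $h_0$, as this requires inspecting the action of $\MM_j$ on its $\lambda=1$ eigenspace at $\sigma_j^2=0$ and checking that it reproduces the factor $(n-d)_+/n$ and no extra coefficient from the momentum parameter $\Delta$.
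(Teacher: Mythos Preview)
Your proposal is correct and follows essentially the same route as the paper: invoke \citep[Proposition~7.4]{asmussen03} on the renewal equation to reduce the question to computing $F(\infty)$, then split the computation of $\lim_t h_k(t)$ into the contributions from $\sigma_j^2>0$ (which decay) and $\sigma_j^2=0$ (which give the surviving constant). The only difference is that where you argue abstractly via the characteristic roots $1,\Delta$ of the scalar recursion $w_{t+1}=(1+\Delta)w_t-\Delta w_{t-1}$ and flag the exact coefficient as the ``main obstacle'', the paper simply plugs $\sigma_j=0$ into the explicit formulas for $\Omega_j,\lambda_{2,j},\lambda_{3,j},\kappa_{2,j},\kappa_{3,j}$ from Theorem~\ref{thm:concentraion_main_detailed} and reads off that the coefficient of $\lambda_{2,j}^t=1^t$ in $h_0$ is exactly $\tfrac{2}{(1+\Delta)^2-4\Delta}\cdot\tfrac12(1-\Delta)^2=1$, so each zero eigenvalue contributes $1/n$ and the limit is $(n-d)_+/n$; this dispatches in one line precisely the step you were worried about.
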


Note that the noise factor $\widetilde{R}$ and the matrix dimension ratio $d/n$ appear in the limit.  Proposition \ref{prop:psi_limit} formulates the limit behaviour of the objective function in both the over-determined and the under-determined case of least squares. When under-determined, the ratio $d/n \ge 1$ and the limiting $\psi(\infty)$ is $0$; otherwise the limit loss value is strictly positive.  The result \eqref{eq:psi_limit} only makes sense when the noise term $\mathcal{K}$ satisfies $||\mathcal{K}||<1$; the next proposition illustrates the conditions on the learning rate and the trace of the eigenvalues of $\AA \AA^T$ such that the kernel norm is less than 1.
\begin{prop}[Convergence threshold] \label{prop:kernel_norm}
Under the learning rate condition $\gamma < \frac{1+\Delta}{\zeta \sigma_{max}^2}$ and trace condition $\frac{(1-\zeta)\gamma}{1-\Delta} \cdot \frac{1}{n} \tr(\AA \AA^T) < 1$, the kernel norm $\|\K\| < 1$ , i.e., $\sum_{t=0}^\infty \K(t) < 1$. 
\end{prop}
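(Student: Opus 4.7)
The plan is to compute the geometric series $\|\K\|=\sum_{t=0}^\infty \K(t) = \gamma^2\zeta(1-\zeta)\sum_{t=0}^\infty H_2(t)$ in closed form and bound it by the trace quantity $\tfrac{(1-\zeta)\gamma}{1-\Delta}\cdot\tfrac{1}{n}\tr(\AA\AA^T)$; the trace hypothesis then finishes the argument. The function $H_2(t)$ arises from iterating the $3\times 3$ matrices $\MM_j$ encoding the second-moment recursion \eqref{eq:W_system} for each spectral index $j$, and as indicated in \eqref{eq : kernelDecomposition} it is (up to an index shift) a linear functional of $\tfrac{1}{n}\sum_{j=1}^n \MM_j^{t-1}\vv_j$ for a fixed source vector $\vv_j$ depending on $(\gamma,\Delta,\zeta,\sigma_j^2)$.

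First I would verify that under the learning rate hypothesis $\gamma<\tfrac{1+\Delta}{\zeta\sigma_{\max}^2}$ one has $|\lambda_{2,\max}|<1$, so that every $\MM_j$ is a strict contraction and $\II-\MM_j$ is invertible. This is a Schur--Cohn stability calculation on the characteristic polynomial of the $3\times 3$ matrix $\MM_j$, analogous to the classical heavy-ball stability criterion, and it guarantees that the geometric series converges to $\sum_{t=0}^\infty \MM_j^{t}=(\II-\MM_j)^{-1}$. Exchanging summation with the linear functional produces the closed form
\[
\sum_{t=0}^\infty H_2(t) \;=\; \frac{1}{n}\sum_{j=1}^n \mathbb{e}_1^T(\II-\MM_j)^{-1}\vv_j.
\]

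Next I would invert $\II-\MM_j$ by cofactor expansion. The key structural feature to exploit is that $\MM_j$ encodes a heavy-ball backward-difference update, so the determinant $\det(\II-\MM_j)$ factors as $(1-\Delta)$ times a positive polynomial $Q_j(\gamma\zeta\sigma_j^2,\Delta)$. Once this factor is pulled out and combined with the $\sigma_j^2$ carried by the source vector $\vv_j$, each summand reduces to $\tfrac{\sigma_j^2}{\gamma\zeta(1-\Delta)}$ multiplied by a ratio bounded above by $1$; it is precisely here that the full strength of the learning rate condition $\gamma\zeta\sigma_j^2<1+\Delta$ is used. Assembling these pieces and multiplying by the prefactor $\gamma^2\zeta(1-\zeta)$ gives
\[
\|\K\| \;\le\; \frac{\gamma(1-\zeta)}{1-\Delta}\cdot\frac{1}{n}\sum_{j=1}^n\sigma_j^2 \;=\; \frac{\gamma(1-\zeta)}{1-\Delta}\cdot\frac{1}{n}\tr(\AA\AA^T),
\]
and the trace hypothesis yields $\|\K\|<1$.

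The main obstacle will be the explicit $3\times 3$ cofactor computation: showing that $\det(\II-\MM_j)=(1-\Delta)\,Q_j$ for a positive polynomial $Q_j$ requires careful algebraic manipulation together with a sign analysis using both the learning rate assumption and $|\Delta|<1$. A cleaner organizational device would be to use the $z$-transform: since $\|\K\|=\hat{\K}(1)$, one can package the geometric-series step and the resolvent identity into a single evaluation of the generating function at $z=1$, thereby avoiding the need to write the full inverse matrix and instead reading off the bound from the $(1,1)$-entry of $(\II-\MM_j)^{-1}$ via Cramer's rule.
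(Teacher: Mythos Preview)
Your proposal is correct and follows the same overall strategy as the paper: compute $\|\K\|=\sum_{t\ge 0}\K(t)$ in closed form and bound it by $\tfrac{(1-\zeta)\gamma}{1-\Delta}\cdot\tfrac{1}{n}\tr(\AA\AA^T)$, then invoke the trace hypothesis.

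The only difference is in how the closed form is obtained. You propose to work with the resolvent $(\II-\MM_j)^{-1}$ via Cramer's rule and a cofactor expansion. The paper instead exploits the fact that the eigenvalues of $\MM_j$ are already known explicitly from the earlier derivation, namely $\lambda_{1,j}=\Delta$ together with $\lambda_{2,j},\lambda_{3,j}$ satisfying the Vieta relations $\lambda_{2,j}+\lambda_{3,j}=\Omega_j^2-2\Delta$ and $\lambda_{2,j}\lambda_{3,j}=\Delta^2$. Since $H_2(t)$ is written directly in terms of $\Delta^{t+1},\lambda_{2,j}^{t+1},\lambda_{3,j}^{t+1}$, the paper just sums three scalar geometric series and simplifies using those Vieta identities to obtain
\[
\|\K\|=\frac{1}{n}\sum_{j=1}^{n}\frac{(1-\zeta)\gamma\sigma_j^2(1+\Delta)}{(1-\Delta)(1+\Delta+\Omega_j)}.
\]
This is exactly the quantity your cofactor computation would produce (your factorization $\det(\II-\MM_j)=(1-\Delta)Q_j$ is precisely $(1-\Delta)(1+\Delta+\Omega_j)(1+\Delta-\Omega_j)=(1-\Delta)(1+\Delta+\Omega_j)\gamma\zeta\sigma_j^2$), but the eigenvalue route avoids the $3\times3$ matrix algebra entirely. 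The final bounding step is identical in both approaches: the learning rate condition forces $\Omega_j>0$, so $\tfrac{1+\Delta}{1+\Delta+\Omega_j}\le 1$, and the trace bound follows.
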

The \textit{learning rate condition} quantifies an upper bound of good learning rates by the largest eigenvalue of the covariance matrix $\sigma_{\max}^2$, batch size $\zeta$, and the momentum parameter $\Delta$. The \textit{trace condition} illustrates a constraint on the growth of $\sigma_{\max}^2.$ Moreover, for a full batch gradient descent model $(\zeta = 1)$, the trace condition can be dropped and we get the classical learning rate condition for gradient descent.

\begin{figure}
    \centering
    \includegraphics[width = 1.0\linewidth]{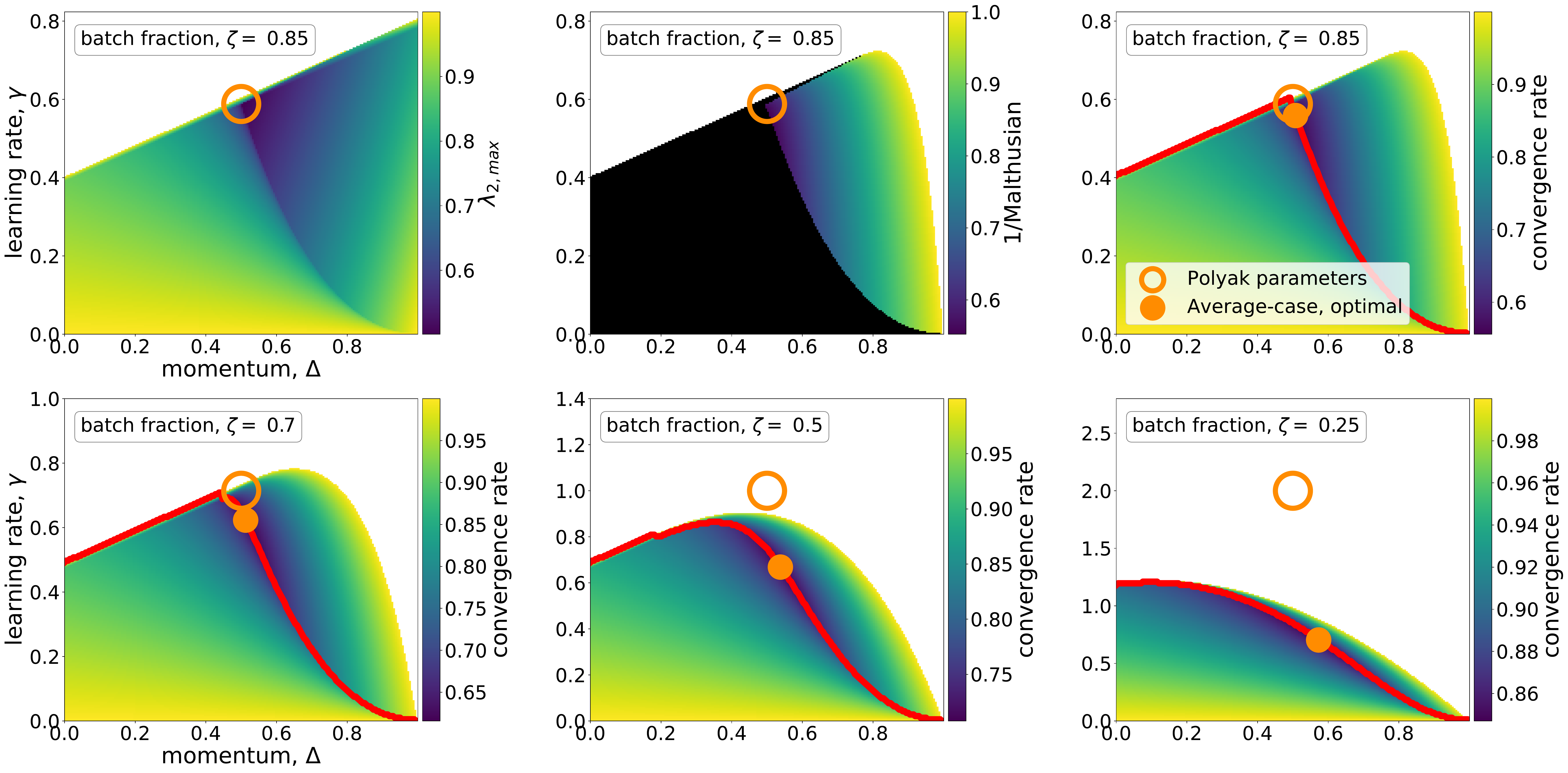}
    %\vspace{-0.5cm}
    \caption{{\bfseries Different convergence rate regions: problem constrained regime versus algorithmically constrained regime } for Gaussian random least squares problem with $(n = 2000 \times d = 1000$). Plots are functions of momentum ($x$-axis) and learning rate ($y$-axis). Analytic expression for $\lambda_{2,\max}$ (see \eqref{eq:lambda_2_max}, \eqref{eq:lambda_values}) -- convergence rate of forcing term $F(t)$ -- given in (top row, column 1) represents the problem constrained region. (top row, column 2) plots 1/(Malthusian exponent) (\eqref{eq: malthusian}, for details see Appendix~\ref{apx:app_numerical_simulations}); black region is where the Malthusian exponent $\Xi$ does not exist. This represents the algorithmically constrained region. Finally, (top row, column 3 and bottom row) plots convergence rate of SGD+M $=\max \{\lambda_{2,\max}, \Xi^{-1}\}$, (see \eqref{eq:convergence_guarantee}), for various batch fractions. When the Malthusian exponent does not exist (black), $\lambda_{2, \max}$ takes over the convergence rate of SGD+M; otherwise the noise in the algorithm (i.e. Malthusian exponent $\Xi$) dominates. Optimal parameters that maximize $\lambda_{2,\max}$ denoted by Polyak parameters (orange circle, \eqref{eq:polyakMomentum}) and the optimal parameters for SGD+M (orange dot); below red line is the problem constrained region; otherwise the algorithmic constrained region. When batch fractions $\zeta = 0.85$ and $\zeta = 0.7$ (top row and bottom row, column 1) (i.e., large batch), the SGD+M convergence rate is the deterministic momentum rate of $1/\sqrt{\kappa}$. As the batch fraction decreases ($\zeta = 0.25$), the convergence rate becomes that of SGD and the optimal parameters of SGD+M and Polyak parameters are quite far from each other. The Malthusian exponent (algorithmically constrained region) starts to control the SGD+M rate as batch fraction $\to 0$.
}
    \label{fig:heatmap}
\end{figure}

\subsection{The Malthusian exponent and complexity}\label{subsec:complexity_analysis}
The rate of convergence of $\psi$ is essentially the worse of two terms -- the forcing term $F(t)$ and a discrete time convolution $\sum_{k=0}^t\psi(k)\mathcal{K}(t-k)$ which depends on the kernel $\mathcal{K}$. Intuitively, the forcing term captures the behavior of the expected value of SGD+M and the discrete time convolution captures the slowdown in training due to noise created by the algorithm.  Note that $F(t)$ is always a lower bound for $\psi(t)$, but it can be that $\psi(t)$ is exponentially (in $t$) larger than $F(t)$ owing to the convolution term.  This occurs when something called the \textit{Malthusian exponent}, denoted $\Xi$, of the convolution Volterra equation exists.   The Malthusian exponent $\Xi$ is given as the unique solution to
\begin{equation}\label{eq: malthusian}
\gamma^2\zeta(1-\zeta)\sum_{t=0}^\infty \Xi^t H_2(t) = 1, \qquad \text{if the solution exists.}
\end{equation}
The Malthusian exponent enters into the complexity analysis in the following way:
\begin{theorem}[Asymptotic rates]\label{thm : Malthusian}
%Let $\lambda_{2,\max} := \max_j |\lambda_{2,j}|$. 
The inverse of the Malthusian exponent always satisfies $\Xi^{-1} > \lambda_{2,\max}$ for finite $n$.  Moreover, for some $C > 0$, the convergence rate for SGD+M is
\begin{equation} \label{eq:convergence_guarantee}
    \psi(t) -\psi(\infty) \le C \max\{ \lambda_{2,\max}, \Xi^{-1} \}^t \quad \text{and} \lim_{t \to \infty} ( \psi(t) -\psi(\infty) )^{1/t} = \max\{ \lambda_{2,\max}, \Xi^{-1} \}.
\end{equation}
\end{theorem}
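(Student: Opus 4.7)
The plan is to treat \eqref{eq:Volterra_eq_main_1} as a discrete renewal equation and extract the decay rate of $\psi(t)-\psi(\infty)$ via a Z-transform analysis and the discrete key renewal theorem. First, set $\tilde\psi(t) \defas \psi(t) - \psi(\infty)$. Using Proposition~\ref{prop:psi_limit} (which furnishes the identity $F(\infty) = \psi(\infty)(1-\|\K\|)$) and substituting into \eqref{eq:Volterra_eq_main_1}, a direct rearrangement gives the centered renewal equation
\begin{equation*}
\tilde\psi(t+1) \;=\; \tilde F(t+1) + \sum_{k=0}^t \K(t-k)\,\tilde\psi(k), \qquad \tilde F(t+1) \defas [F(t+1)-F(\infty)] - \psi(\infty)\sum_{j>t}\K(j),
\end{equation*}
with the same kernel $\K$ and a forcing $\tilde F$ that vanishes at infinity. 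From the spectral expressions in Section~\ref{sec:dynamics}, both $F(t)-F(\infty)$ and the kernel tail $\sum_{j>t}\K(j)$ are finite combinations of sequences decaying at most at rate $\lambda_{2,\max}$. Hence $\tilde F(t) = O(\lambda_{2,\max}^t)$ and $\limsup_t \K(t)^{1/t} = \lambda_{2,\max}$, so the generating function $\hat\K(z) \defas \sum_{t\ge 0} z^t\K(t)$ has radius of convergence exactly $\lambda_{2,\max}^{-1}$.

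The strict inequality $\Xi^{-1} > \lambda_{2,\max}$ is now immediate: the Malthusian condition \eqref{eq: malthusian} requires $\hat\K(\Xi)$ to converge (to $1$), which forces $\Xi$ to lie strictly inside the disc of convergence of radius $\lambda_{2,\max}^{-1}$. For the decay rate, apply the Z-transform $\hat g(z) = \sum_t z^t g(t)$ to the centered equation; the convolution theorem combined with a boundary computation yields
\begin{equation*}
\hat{\tilde\psi}(z) \;=\; \frac{\hat{\tilde F}(z) + c_0}{1 - z\,\hat\K(z)},
\end{equation*}
where $c_0$ is a constant absorbing the $t=0$ boundary term. By Hadamard's formula, $\limsup_t |\tilde\psi(t)|^{1/t}$ equals the reciprocal of the smallest-modulus singularity of $\hat{\tilde\psi}$. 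Two singularities compete: the singularity of $\hat{\tilde F}$ at $z = \lambda_{2,\max}^{-1}$ (inherited from $\tilde F$), and a real zero of $1 - z\hat\K(z)$ arising in the Malthusian regime; an exponential-ansatz calculation (substituting $\rho^t$ into the homogeneous recurrence) identifies the reciprocal of this zero with $\Xi^{-1}$. Taking the smaller candidate in absolute value gives the claimed rate $\max\{\lambda_{2,\max},\Xi^{-1}\}$ and the explicit constant $C$ in~\eqref{eq:convergence_guarantee}.

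For the matching $\liminf$, I apply the discrete key renewal theorem to the exponentially tilted equation obtained by multiplying through by $\Xi^t$: the tilted kernel sums to $1$ by \eqref{eq: malthusian} and the tilted forcing $\Xi^t\tilde F(t)$ is summable (again because $\Xi^{-1}>\lambda_{2,\max}$), so the renewal theorem delivers a strictly positive limit for $\Xi^t\tilde\psi(t)$ that matches the upper bound. \textbf{The principal obstacle} is the non-Malthusian regime (the black band in Figure~\ref{fig:heatmap}): there $1 - z\hat\K(z)$ has no real zero in $[0,\lambda_{2,\max}^{-1})$, the denominator contributes no additional singularity, and the rate in \eqref{eq:convergence_guarantee} collapses to $\lambda_{2,\max}$. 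Establishing this rigorously requires a tauberian estimate showing that $|1 - z\hat\K(z)|$ stays bounded away from zero on the closed interval $[0,\lambda_{2,\max}^{-1}]$, a task complicated by possibly oscillating boundary contributions from complex eigenvalues $\lambda_{2,j}$; the remainder is a routine application of discrete renewal theory.
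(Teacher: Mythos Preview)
Your argument for $\Xi<\lambda_{2,\max}^{-1}$ is the paper's own argument (Proposition~\ref{prop: mal_bound_eigmax} in the appendix): there the authors verify, by a real/complex case split on the eigenvalues $\lambda_{2,j}$, that every summand of $H_2(t)$ decays at rate at most $\lambda_{2,\max}$ with equality attained, so the series in~\eqref{eq: malthusian} diverges at $\Xi=\lambda_{2,\max}^{-1}$ and can only equal $1$ strictly inside. One small point: radius of convergence alone does not force strict inequality (a power series may converge on its boundary), so you should say explicitly that $\hat\K$ blows up at $\lambda_{2,\max}^{-1}$; this holds here because $\K$ is a \emph{finite} combination of geometric terms and the dominant one contributes a genuine $1/(1-\lambda_{2,\max}z)$ pole.

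The paper gives no separate detailed proof of~\eqref{eq:convergence_guarantee} beyond the appeal to renewal theory, so your Z-transform and tilting arguments go beyond what is written there. There is, however, a real index mismatch in your identification of the pole with $\Xi$. The Volterra recursion has $\psi(t{+}1)$ on the left but the convolution evaluated at time $t$ on the right, so in standard renewal form the kernel is the \emph{shift} $\K'(s)=\K(s{-}1)$. Your own denominator $1-z\hat\K(z)$ is exactly $1-\hat\K'(z)$, and its positive root $z_0$ satisfies $z_0\hat\K(z_0)=1$, which is \emph{not} the condition $\hat\K(\Xi)=1$ of~\eqref{eq: malthusian}. For instance with $\K(j)=a\lambda^j$ one gets $z_0^{-1}=a+\lambda$ while $\Xi^{-1}=\lambda/(1-a)$, and a direct solution of the recursion shows the true decay rate is $a+\lambda$. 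The same shift breaks your tilting step: multiplying the centered equation by $\Xi^{t+1}$ produces a tilted kernel $\Xi^{j+1}\K(j)$ with total mass $\Xi\hat\K(\Xi)=\Xi$, not $1$, so the key renewal theorem does not apply as stated. You should either work throughout with $\K'$ and define the Malthusian parameter via $\hat\K'(\cdot)=1$, or flag that~\eqref{eq: malthusian} is one shift away from the condition your generating-function argument actually needs; the qualitative conclusion (pole strictly below $\lambda_{2,\max}^{-1}$, rate equal to its reciprocal) survives either way.
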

Thus to understand the rates of convergence, it is necessary to understand the Malthusian exponent as a function of $\gamma$ and $\Delta$. 

\subsection{Two regimes for the Malthusian exponent}%\label{subsec:complexity_analysis}
On the one hand, the Malthusian exponent $\Xi$ comes from the stochasticity of the algorithm itself. On the other hand, $\lambda_{2,\max}(\gamma, \Delta, \zeta)$ is determined completely by the problem instance information --- the eigenspectrum of $\AA\AA^T$. (Note we want to emphasize the dependence of $\lambda_{2, \max}$ on learning rate, momentum, and batch fraction.) Let $\sigma_{\max}^2$ and $\sigma_{\min}^2$ denote the maximum and minimum \textit{nonzero} eigenvalues of $\AA\AA^T$, respectively. For a fixed batch size, the optimal parameters $(\gamma_{\lambda}, \Delta_{\lambda})$ of  $\lambda_{2,\max}$ are 
\begin{equation}\label{eq:polyakMomentum}
    \gamma_{\lambda} = \frac{1}{\zeta}\bigg(\frac{2}{\sqrt{\sigma^2_{\max}} + \sqrt{\sigma^2_{\min}}}\bigg)^{2}
    \quad\text{and}\quad \Delta_{\lambda} = \bigg(\frac{\sqrt{\sigma^2_{\max}} - \sqrt{\sigma^2_{\min}}}{\sqrt{\sigma^2_{\max}} + \sqrt{\sigma^2_{\min}}}\bigg)^2.
\end{equation}
 In the full batch setting, i.e. $\zeta =1$, these optimal parameters $\gamma_{\lambda}$ and $\Delta_{\lambda}$ for $\lambda_{2,\max}$ are exactly the Polyak momentum parameters \eqref{eq:polyak_para}. Moreover, in this setting, there is no stochasticity so the Malthusian exponent disappears and the convergence rate \eqref{eq:convergence_guarantee} is $\lambda_{2,\max}$. We observe from \eqref{eq:polyakMomentum} that for all fixed batch sizes, the optimal momentum parameter, $\Delta_{\lambda}$, is independent of batch size.  The only dependence on batch size appears in the learning rate. At first it appears that for small batch fractions, one can take large learning rates, but in that case, the inverse of the Malthusian exponent $\Xi^{-1}$ dominates the convergence rate of SGD+M \eqref{eq:convergence_guarantee} and you cannot take $\gamma$ and $\Delta$ to be as in \eqref{eq:polyakMomentum} (See Figure \ref{fig:heatmap}).

We will define two subsets of parameter space,  the \textit{problem constrained regime} and the \textit{algorithmically constrained regime} (or stochastically constrained regime). The problem constrained regime is for some tolerance $\varepsilon >0$
\begin{equation}\label{eq:low_noise_regime} 
\text{problem constrained regime} \quad \defas \{(\gamma, \Delta) \, : 1- \sqrt{\Xi} < (1- \sqrt{\lambda_{2, \max}^{-1}} ) (1 - \varepsilon)\}.
\end{equation}
The remainder we call the \textit{algorithmically constrained} regime.  To explain the tolerance: for finite $n$, it transpires that we always have $\Xi^{-1} > \lambda_{2,\max}$, but it could be vanishingly close to $\lambda_{2,\max}$ as a function of $n$. Hence we introduce the tolerance to give the correct qualitative behavior in finite $n$.% $\lambda_{2,\max}$ even 

\begin{prop}\label{prop:low_noise_regime} 
If the learning rate $\gamma \le \min ( \frac{1+\Delta}{\zeta\sigma_{max}^2}, \frac{(1-\sqrt{\Delta})^2}{\zeta \sigma_{\min}^2})$, with the trace condition \\$\frac{8(1-\zeta)\gamma}{1-\Delta} \cdot \frac{1}{n}\tr(\AA^T\AA) < 1$,
then $(\gamma, \Delta)$ is in the problem constrained regime with $\varepsilon = 1/2$. 
\end{prop}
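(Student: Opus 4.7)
The plan is to reduce the problem-constrained condition (with $\varepsilon=1/2$) to a lower bound on the Malthusian exponent $\Xi$, and then obtain that lower bound from the defining equation \eqref{eq: malthusian} by monotonicity. With $\varepsilon=1/2$, the condition $1-\sqrt{\Xi} < \tfrac12\bigl(1-\sqrt{\lambda_{2,\max}^{-1}}\bigr)$ rearranges to $\sqrt{\Xi} > \tfrac12\bigl(1+\sqrt{\lambda_{2,\max}^{-1}}\bigr)$. I would set $\Xi_\ast := \tfrac14\bigl(1+\sqrt{\lambda_{2,\max}^{-1}}\bigr)^2$ and note $1 \le \Xi_\ast \le \lambda_{2,\max}^{-1}$ since $\lambda_{2,\max}\le 1$. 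The function $\Phi(\xi) := \gamma^2\zeta(1-\zeta)\sum_{t\ge 0}\xi^t H_2(t)$ is a power series with nonnegative coefficients, hence strictly increasing and continuous on the interior of its disk of convergence, which contains $[0,\lambda_{2,\max}^{-1})$ by the spectral bound on $H_2$ coming from $\MM_j$. Since $\Phi(\Xi)=1$ is the definition of $\Xi$, the inequality $\Xi \ge \Xi_\ast$ is equivalent to $\Phi(\Xi_\ast)\le 1$. So it suffices to prove this single inequality.

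For the bound on $\Phi(\Xi_\ast)$, I would expand $H_2(t)$ via the eigendecomposition of the $3\times 3$ operator $\MM_j$ from \eqref{eq:W_system}: the learning-rate conditions $\gamma \le (1+\Delta)/(\zeta\sigma_{\max}^2)$ and $\gamma \le (1-\sqrt{\Delta})^2/(\zeta\sigma_{\min}^2)$ guarantee that, uniformly in $j$, the eigenvalues $\mu_{j,\ell}$ of the associated second-moment propagator satisfy $|\mu_{j,\ell}|\le \lambda_{2,\max}^2$ and stay uniformly away from the degenerate (double-root) case. Summing the geometric series yields
\[
\Phi(\xi) \;=\; \gamma^2\zeta(1-\zeta)\cdot\tfrac{1}{n}\sum_{j,\ell}\frac{c_{j,\ell}}{1-\xi\mu_{j,\ell}}
\]
for explicit coefficients $c_{j,\ell}$ extracted from the spectral decomposition of $\MM_j$. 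At $\xi=1$ this is precisely the kernel norm $\|\mathcal{K}\|$, which in the proof of Proposition \ref{prop:kernel_norm} is bounded by $\tfrac{(1-\zeta)\gamma}{1-\Delta}\cdot\tfrac1n\tr(\AA\AA^T)$. Passing from $\xi=1$ to $\xi=\Xi_\ast$ simply replaces each $1-\mu_{j,\ell}$ by $1-\Xi_\ast\mu_{j,\ell}$; using $\Xi_\ast|\mu_{j,\ell}|\le \lambda_{2,\max}^{-1}\cdot\lambda_{2,\max}^2 = \lambda_{2,\max}$ together with an elementary computation bounding the ratio $(1-\mu)/(1-\Xi_\ast\mu)$ for $\mu\in[0,\lambda_{2,\max}^2]$, I would show that the per-term degradation is at most a factor of $8$. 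Combined with the kernel-norm estimate, this gives $\Phi(\Xi_\ast) \le \tfrac{8(1-\zeta)\gamma}{1-\Delta}\cdot\tfrac{1}{n}\tr(\AA\AA^T) \le 1$ by the trace hypothesis, and hence $\Xi \ge \Xi_\ast$ as required.

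The main obstacle is the denominator-degradation step that yields the explicit constant $8$. One must handle the three regimes for the roots $\lambda_{2,j}$ of the characteristic polynomial of $\MM_j$ --- real distinct, complex conjugate, and near-double --- in a unified way, and carry the coefficients $c_{j,\ell}$ (which themselves depend on $\sigma_j^2$, $\gamma$, and $\Delta$) accurately enough that a worst-case multiplicative loss of $8$ is honest rather than a cover for an unbounded ratio. The two learning-rate hypotheses are exactly what is needed to keep $\Xi_\ast|\mu_{j,\ell}|$ away from $1$ and to prevent the near-double regime from being singular; once this constant is nailed down, every other step is either monotonicity in $\xi$ or direct reuse of Proposition \ref{prop:kernel_norm}.
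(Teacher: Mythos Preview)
Your high-level reduction is exactly right and coincides with the paper's: set $\sqrt{\Xi_\ast} = \tfrac12(1+\sqrt{\lambda_{2,\max}^{-1}})$, use monotonicity of $\Phi(\xi) := \sum_{t\ge 0}\xi^t\mathcal{K}(t)$, and reduce the proposition to the single inequality $\Phi(\Xi_\ast) < 1$. The gap is in how you propose to establish that inequality.

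First, a factual slip: the eigenvalues of $\MM_j$ are $\Delta$, $\lambda_{2,j}$, $\lambda_{3,j}$, with magnitudes bounded by $\lambda_{2,\max}$, \emph{not} $\lambda_{2,\max}^2$. So $\Xi_\ast|\mu_{j,\ell}|$ can approach $1$, and no uniform gap from $1$ is available for free.

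The more serious obstruction is the partial-fraction route itself. In the expansion $\Phi(\xi) = \gamma^2\zeta(1-\zeta)\tfrac{1}{n}\sum_{j,\ell} c_{j,\ell}/(1-\xi\mu_{j,\ell})$, the coefficients $c_{j,\ell}$ carry \emph{both signs}: in the formula for $H_2(t)$ the $\Delta^{t+1}$ term enters with the opposite sign of the $\lambda_{2,j}^{t+1}$ and $\lambda_{3,j}^{t+1}$ terms, and the prefactor $(\Omega_j^2-4\Delta)^{-1}$ can itself be of either sign. A per-term bound of the form $(1-\mu_{j,\ell})/(1-\Xi_\ast\mu_{j,\ell}) \le 8$ therefore does \emph{not} control $\Phi(\Xi_\ast)$ by $8\,\Phi(1)$: inflating a negative term works against you, and the three pieces can individually be far larger in modulus than their sum. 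No case split into ``real distinct / complex / near-double'' regimes will repair this, because the cancellation is the whole point --- $H_2(t)\ge 0$ only \emph{after} the three pieces are combined. So the issue is not that the constant $8$ is hard to pin down; the argument as outlined does not produce any finite constant.

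The paper's fix is to first collapse the three partial fractions into a single positive rational function of $\sqrt{\Upsilon}$ before estimating. Using $\lambda_{2,j}\lambda_{3,j}=\Delta^2$ and $\lambda_{2,j}+\lambda_{3,j}=\Omega_j^2-2\Delta$, one gets
\[
\frac{-2\Delta}{1-\Upsilon\Delta}+\frac{\lambda_{2,j}}{1-\Upsilon\lambda_{2,j}}+\frac{\lambda_{3,j}}{1-\Upsilon\lambda_{3,j}}
=\frac{(\Omega_j^2-4\Delta)(1+\Upsilon\Delta)}{(1-\Upsilon\Delta)\,(1+\Upsilon\Delta+\sqrt{\Upsilon}\,\Omega_j)\,(1+\Upsilon\Delta-\sqrt{\Upsilon}\,\Omega_j)},
\]
so the $(\Omega_j^2-4\Delta)$ cancels and every remaining factor is positive under $\gamma\le(1+\Delta)/(\zeta\sigma_{\max}^2)$. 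The dangerous factor is the quadratic $f_j(x):=1+x^2\Delta-x\Omega_j$ evaluated at $x=\sqrt{\Upsilon}$. One has $f_j(1)=\gamma\zeta\sigma_j^2$, and the smallest of these quadratics, $f_{\min}(x)=1+x^2\Delta-x\Omega_{\max}$, has its first root at $x=\sqrt{\lambda_{2,\max}^{-1}}$ (here is where the second learning-rate hypothesis $\gamma\le(1-\sqrt{\Delta})^2/(\zeta\sigma_{\min}^2)$ is used, to ensure $\Omega_{\max}^2\ge 4\Delta$ so the root is real). A midpoint estimate then yields $f_j(\sqrt{\Xi_\ast})\ge \tfrac14 f_j(1)=\tfrac14\gamma\zeta\sigma_j^2$, and a parallel concavity argument gives $1-\Xi_\ast\Delta\ge\tfrac12(1-\Delta)$. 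These two bounds produce exactly the factor $4\times 2=8$; the $\sigma_j^4$ in the numerator combines with the $\sigma_j^2$ from $f_j$ to leave $\tfrac1n\sum_j\sigma_j^2=\tfrac1n\tr(\AA^T\AA)$, matching the trace hypothesis.

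In short: your target inequality and the monotonicity reduction are correct and match the paper, but the per-term ratio bound on the partial-fraction expansion cannot work because the coefficients are not of one sign. The missing ingredient is the algebraic collapse to a single positive rational function in $\sqrt{\Upsilon}$, after which a quadratic midpoint estimate (not a ratio argument) delivers the constant $8$.
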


%In particular, within a factor of $2$, the rate guarantee is $\big(1 - \frac{1}{2}(1- \sqrt{\lambda_{2, \max}^{-1}})\big)^{-2} = \Big(\frac{1+\sqrt{\lambda_{2, \max}^{-1}}}{2}\Big)^{-2} > \Xi^{-1}.$
%$1-1/2(1-\lambda_{2,\max}) = \frac{1 + \lambda_{2,\max}}{2} \ge \Xi^{-1}$.
Therefore by \eqref{eq:convergence_guarantee}, we have that 
\begin{equation}
    \psi(t) - \psi(\infty) \le D \left( \frac{4\lambda_{2,\max}}{(1+\sqrt{\lambda_{2,\max}})^2}\right)^t\quad\text{for some}\ D>0;
    %\psi(t) - \psi(\infty) \le C \left ( \frac{1+\lambda_{2,\max} (\gamma, \Delta)}{2} \right )^t
\end{equation}
we note that the expression in the parenthesis is $1-\tfrac12(1-\lambda_{2,\max}) + \mathcal{O}((1-\lambda_{2,\max})^2)$.

In the problem constrained regime, it is worthwhile to note that the overall convergence rate is the same as full batch momentum with adjusted learning rate, i.e., the batch size does not play an important role as long as we are in the problem constrained regime:
\begin{prop}[Concentration of SGD + M, full batch]\label{prop: concent_fullbatch}
  Suppose $\zeta=1$ and Assumptions~\ref{assumption: Vector} and \ref{assumption: spectral_density} hold with the learning rate  $\gamma < \frac{1+\Delta}{ \sigma_{max}^2}$.  If we let $\xx_t^{\text{full}}$ denote the iterates of full-batch gradient descent with momentum (GD+M), then
    \begin{equation} \label{eq:Volterra_eq_fullbatch}
  \sup_{0 \le t \le T}  \big | f \big ( \xx_{t}^{\text{full}} \big ) -  \psi_{\text{full}}(t) \big | \Prto[n] 0, \quad \text{where} \quad \psi_{\text{full}}(t+1) = \frac{R}{2} h_1(t+1) + \frac{\tilde{R}}{2} h_0(t+1).
\end{equation}
The functions $h_1$ and $h_0$ are defined in Theorem~\ref{thm: concentration_main} with $\zeta = 1$.
% where the function $\psi_{\text{full}}$ is the solution to the equation 
% \begin{equation} \begin{gathered} \label{eq:Volterra_eq_fullbatch}
%     \psi_{\text{full}}(t+1) = \frac{R}{2} h_1(t+1) + \frac{\tilde{R}}{2} h_0(t+1).
% \end{gathered}
% \end{equation}
In particular, let $\gamma_{\text{full}}$ denote the learning rate for full batch GD+M, and $\gamma, \zeta<1$ for the learning rate and batch fraction in SGD+M with corresponding $\psi$ in Theorem \ref{thm: concentration_main}. Then when $\gamma_{\text{full}} = \gamma\zeta$ is satisfied, $\psi$ and $\psi_{\text{full}}$ share the same convergence rate in the problem constrained regime.
\end{prop}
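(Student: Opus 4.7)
My plan is to derive the first claim by specializing Theorem~\ref{thm: concentration_main} to $\zeta=1$, which kills the convolution term of the Volterra equation, and to derive the second by identifying the forcing term of the SGD+M Volterra equation with the full-batch loss under the effective learning rate $\gamma\zeta$.

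\textbf{Step 1: concentration at $\zeta=1$.} When $\zeta=1$ the mini-batch equals the full sample, so $\PP_k=\II$ for every $k$ and the update \eqref{eq:oursgd} becomes deterministic GD+M; in particular $\xx_t=\xx_t^{\text{full}}$ once $\AA$ is drawn. The hypothesis $\gamma<(1+\Delta)/\sigma_{\max}^2$ coincides with the hypothesis of Theorem~\ref{thm: concentration_main} at $\zeta=1$. Moreover the convolution coefficient $\gamma^2\zeta(1-\zeta)$ in \eqref{eq:Volterra_eq_main_1} vanishes at $\zeta=1$, collapsing the Volterra equation to the claimed $\psi_{\text{full}}(t+1)=\tfrac{R}{2}h_1(t+1)+\tfrac{\widetilde{R}}{2}h_0(t+1)$. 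The convergence statement \eqref{eq:Volterra_eq_fullbatch} is then immediate from \eqref{eq:probability_convergence}.

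\textbf{Step 2: forcing identification under $\gamma_{\text{full}}=\gamma\zeta$.} The forcing term encodes the SGD+M dynamics with the mini-batch noise averaged out. Taking conditional expectation in \eqref{eq:oursgd}, using $\mathbb{E}[\PP_k]=\zeta\II$, and projecting $\ww_t\defas\AA\xx_t-\bb$ onto the eigenbasis of $\AA\AA^T$ gives, for each $j$, the two-step scalar mean recurrence
\[
\mathbb{E}[w_{t+1,j}]=(1+\Delta-\gamma\zeta\sigma_j^2)\,\mathbb{E}[w_{t,j}]-\Delta\,\mathbb{E}[w_{t-1,j}],
\]
whose characteristic polynomial depends on $(\gamma,\Delta,\zeta)$ only through $\gamma\zeta$ and $\Delta$. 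The $3\times 3$ operator $\MM_j$ of \eqref{eq:W_system} propagates the second-moment vector $\widetilde{\mathcal{X}}_{t,j}$, and the forcing-side of its action (i.e.\ with no injected batch noise) is built from quadratic products of the scalar mean recurrence above; hence the functions $h_0,h_1$ of \eqref{eq : forcingDecomposition} and the dominant eigenvalue $\lambda_{2,\max}$ of \eqref{eq:lambda_values} all depend on $(\gamma,\Delta,\zeta)$ only through $\gamma\zeta$ and $\Delta$. Substituting $\gamma_{\text{full}}=\gamma\zeta$ therefore yields identical forcing terms for $\psi$ and $\psi_{\text{full}}$.

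\textbf{Step 3 and main obstacle.} By Theorem~\ref{thm : Malthusian} the rate of $\psi$ is $\max\{\lambda_{2,\max},\Xi^{-1}\}$, which reduces to $\lambda_{2,\max}$ on the problem constrained regime \eqref{eq:low_noise_regime}; the noise kernel of $\psi_{\text{full}}$ is identically zero, so its rate is $\lambda_{2,\max}$ outright. Combined with Step~2 this produces the claimed equality of rates. The only delicate point is the algebraic assertion that $\lambda_{2,\max}$ truly depends on $\gamma$ and $\zeta$ only through the product $\gamma\zeta$: intuitively this holds because the forcing term reflects only the mean (heavy-ball) dynamics, which see only the effective learning rate $\gamma\zeta$, but making it rigorous reduces to an inspection of the explicit roots in \eqref{eq:lambda_values}.
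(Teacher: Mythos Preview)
Your proposal is correct and follows essentially the same approach as the paper. For the first part the paper chooses to re-derive the recurrence at $\zeta=1$ (noting that all mini-batch errors vanish and only the initial-condition error $\EE_{IC}$ survives), whereas you specialize Theorem~\ref{thm: concentration_main} directly; both are valid and yield the same conclusion. For the second part your identification of the forcing with full-batch dynamics under $\gamma_{\text{full}}=\gamma\zeta$ is exactly the paper's argument, and your ``main obstacle'' dissolves on inspection of \eqref{eq:lambda_values}: since $\Omega_j=1-\gamma\zeta\sigma_j^2+\Delta$ and each $\lambda_{2,j}$ is a function of $(\Omega_j,\Delta)$ alone, the dependence on $(\gamma,\zeta)$ is manifestly only through the product $\gamma\zeta$, so $\lambda_{2,\max}^{(\text{full})}=\lambda_{2,\max}$ whenever $\gamma_{\text{full}}=\gamma\zeta$.
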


\section{Performance of SGD+M: implicit conditioning ratio (ICR)} \label{sec:ICR}
Recall from \eqref{eq:condition_numbers} the definition of condition number, average condition number, and the implicit conditioning ratio
\begin{equation}
    \bar{\kappa} \defas \frac{\frac{1}{n}\sum_{j\in[n]} \sigma_j^2}{ \sigma_{\min}^2} <  \frac{\sigma_{\max}^2}{\sigma_{\min}^2} \defas \kappa \qquad \text{and} \qquad 
 \text{ICR} \defas \frac{\bar{\kappa}}{\sqrt{\kappa}}.
\end{equation}
Moreover recall that we refer to the \textit{large batch} regime where $\zeta \geq \text{ICR}$ and the \textit{small batch} regime where $\zeta \leq \text{ICR}$.

We begin by giving a rate guarantee that holds in the problem constrained regime, for a specific choice of $\gamma$ and $\Delta$.
\begin{prop}[Good momentum parameters]\label{prop:lambda_delta_choice} Suppose the learning rate and momentum satisfy
\begin{equation} \label{eq:good_parameters}
   \gamma = \frac{(1-\sqrt{\Delta})^2}{\zeta\sigma_{\min}^2} \,\,  \text{and} \,  \, \Delta = \max \left \{ \left (
\frac{1-\frac{\mathcal{C}}{\bar{\kappa}}}{1+\frac{\mathcal{C}}{\bar{\kappa}}} \right ),  \left ( \frac{1-\frac{1}{\sqrt{2 \kappa}}}{1+\frac{1}{\sqrt{2\kappa}}} \right )\right \}^2, \,  \text{where $\mathcal{C} \defas \zeta/(8(1-\zeta))$.}
\end{equation}
Then $\lambda_{2,\max} = \Delta$ and for some $C > 0$, the convergence rate for SGD+M is
\begin{equation} \label{eq:lambda_choice}
   \psi(t) - \psi(\infty) \le C \cdot \Delta^t = C \cdot \max \left \{
\left(\frac{1-\frac{\mathcal{C}}{\bar{\kappa}}}{1+\frac{\mathcal{C}}{\bar{\kappa}}}\right), \left( \frac{1-\frac{1}{\sqrt{2 \kappa}}}{1+\frac{1}{\sqrt{2\kappa}}} \right ) \right \}^{2t}.
\end{equation}
\end{prop}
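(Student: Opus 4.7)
The plan is to verify the hypotheses of Proposition~\ref{prop:low_noise_regime} for this specific choice of $(\gamma,\Delta)$ and to read off the convergence rate from Theorem~\ref{thm : Malthusian}. The two terms inside the max defining $\Delta$ are chosen to control two distinct constraints: the first term will arise from the trace condition of Proposition~\ref{prop:low_noise_regime}, while the second will arise from the Polyak-style eigenvalue range $\gamma\zeta\sigma_j^2\in[(1-\sqrt\Delta)^2,(1+\sqrt\Delta)^2]$ needed to force $\lambda_{2,\max}=\Delta$ (and, as a by-product, the learning-rate condition).

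\textbf{Step 1: compute $\lambda_{2,\max}$.} The recurrence matrix $\MM_j$ acts on $(w_{t,j}^2,w_{t-1,j}^2,w_{t,j}w_{t-1,j})$, so its eigenvalues are $\mu_{j,1}^2,\mu_{j,2}^2,\mu_{j,1}\mu_{j,2}$, where $\mu_{j,1},\mu_{j,2}$ are the roots of the heavy-ball characteristic polynomial $\mu^2-(1+\Delta-\gamma\zeta\sigma_j^2)\mu+\Delta=0$. By Vieta, $\mu_{j,1}\mu_{j,2}=\Delta$, and the discriminant is non-positive precisely when $\gamma\zeta\sigma_j^2\in[(1-\sqrt\Delta)^2,(1+\sqrt\Delta)^2]$; in that regime the roots are complex conjugate with $|\mu_{j,i}|^2=\Delta$, so all three eigenvalues of $\MM_j$ have modulus $\le\Delta$ (with equality at the endpoints). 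The choice $\gamma=(1-\sqrt\Delta)^2/(\zeta\sigma_{\min}^2)$ puts $\gamma\zeta\sigma_{\min}^2$ on the left endpoint, and the whole interval $[\sigma_{\min}^2,\sigma_{\max}^2]$ lands inside $[(1-\sqrt\Delta)^2,(1+\sqrt\Delta)^2]/(\gamma\zeta)$ iff $(1-\sqrt\Delta)^2\kappa\le(1+\sqrt\Delta)^2$, equivalently $\sqrt\Delta\ge(\sqrt\kappa-1)/(\sqrt\kappa+1)$. Since $(x-1)/(x+1)$ is increasing, the second term in the max for $\sqrt\Delta$, namely $(\sqrt{2\kappa}-1)/(\sqrt{2\kappa}+1)$, is at least this Polyak value, so the required containment holds and $\lambda_{2,\max}=\Delta$.

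\textbf{Step 2: verify the hypotheses of Proposition \ref{prop:low_noise_regime}.} The condition $\gamma\le(1-\sqrt\Delta)^2/(\zeta\sigma_{\min}^2)$ is our defining equality. The learning-rate bound $\gamma\le(1+\Delta)/(\zeta\sigma_{\max}^2)$ reduces to $(1-\sqrt\Delta)^2\kappa\le 1+\Delta$; at the boundary value $\sqrt\Delta=(\sqrt{2\kappa}-1)/(\sqrt{2\kappa}+1)$ one checks directly that the ratio is $4\kappa/(4\kappa+2)<1$, and the ratio is monotone decreasing in $\sqrt\Delta$ on $[0,1)$, so it persists for any larger $\Delta$. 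Finally, using $\tfrac1n\tr(\AA\AA^T)=\sigma_{\min}^2\bar\kappa$ and the identity $(1-\sqrt\Delta)^2/(1-\Delta)=(1-\sqrt\Delta)/(1+\sqrt\Delta)$, the trace condition
\[
\frac{8(1-\zeta)\gamma}{1-\Delta}\cdot\frac{1}{n}\tr(\AA\AA^T)\le 1
\]
simplifies exactly to $(1-\sqrt\Delta)/(1+\sqrt\Delta)\le\mathcal{C}/\bar\kappa$, i.e.\ $\sqrt\Delta\ge(\bar\kappa-\mathcal{C})/(\bar\kappa+\mathcal{C})$, which is precisely the first term in the max.

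\textbf{Step 3: assemble the rate.} With both hypotheses in hand, Proposition~\ref{prop:low_noise_regime} puts $(\gamma,\Delta)$ in the problem constrained regime with $\varepsilon=1/2$, and the displayed bound following Proposition~\ref{prop:low_noise_regime} (together with Theorem~\ref{thm : Malthusian}) gives $\psi(t)-\psi(\infty)\le D\bigl(4\lambda_{2,\max}/(1+\sqrt{\lambda_{2,\max}})^2\bigr)^t$ for some $D>0$. Substituting $\lambda_{2,\max}=\Delta$ from Step~1 and using that $4\Delta/(1+\sqrt\Delta)^2=1-\tfrac12(1-\Delta)+\mathcal{O}((1-\Delta)^2)$ behaves like $\Delta^t$ up to absolute constants yields the claimed bound $\psi(t)-\psi(\infty)\le C\,\Delta^t$, with $\Delta$ as in \eqref{eq:good_parameters}.

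The main technical obstacle is the algebraic book-keeping in Step~2, where the particular form $\tfrac{1-x}{1+x}$ on both sides must be matched exactly so that the first term in the max is \emph{forced} by the trace condition (and not merely sufficient); pinpointing the constant $\mathcal{C}=\zeta/(8(1-\zeta))$ as the threshold emerging from the factor $8(1-\zeta)/\zeta$ in the trace condition is what makes the two terms in the max tight rather than a loose sufficient condition.
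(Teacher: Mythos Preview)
Your proposal is correct and follows essentially the same approach as the paper's proof: verify the hypotheses of Proposition~\ref{prop:low_noise_regime} for the specific $(\gamma,\Delta)$ and invoke Theorem~\ref{thm : Malthusian}. The only cosmetic difference is that the paper packages the eigenvalue-range condition and the learning-rate bound into the single inequality $\gamma\le(1+\sqrt\Delta)^2/(2\zeta\sigma_{\max}^2)$ (cf.\ the Remark after the proposition), which directly produces the $\sqrt{2\kappa}$ term, whereas you separate these into the Polyak range $(1-\sqrt\Delta)^2\kappa\le(1+\sqrt\Delta)^2$ in Step~1 and the check $(1-\sqrt\Delta)^2\kappa\le 1+\Delta$ in Step~2; both routes are valid and arrive at the same constraint on $\Delta$.
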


\begin{remark}  We note that for all $\Delta$ satisfying $\frac{(1-\sqrt{\Delta})^2}{\zeta\sigma_{\min}^2} \le \frac{(1+\sqrt{\Delta})^2}{2\zeta\sigma_{\max}^2}$ with the learning rate $\gamma$ as in \eqref{eq:good_parameters}, we have that $\lambda_{2, \max} = \Delta$. By minimizing the $\Delta$ (i.e., by finding the fastest convergence rate), we get the formula for the momentum parameter in \eqref{eq:good_parameters}.
\end{remark}

% \begin{prop}\label{prop:lambda_delta_choice}
% Suppose $\Delta$ satisfies $\frac{(1-\sqrt{\Delta})^2}{\zeta\sigma_{\min}^2} \le \frac{(1+\sqrt{\Delta})^2}{2\zeta\sigma_{\max}^2}$
% and suppose that $\gamma =   \frac{(1-\sqrt{\Delta})^2}{\zeta\sigma_{\min}^2}$, then the convergence rate is given by
% \( \lambda_{2,\max} = \Delta. \)
% If we optimize over $\Delta$ such that $(\gamma,\Delta)$ is in the problem constrained regime, then the argmin is given by
% \begin{equation} \label{eq:lambda_choice}
% \Delta
% \coloneqq \max \left \{
% \left(\frac{1-\frac{\mathcal{C}}{\bar{\kappa}}}{1+\frac{\mathcal{C}}{\bar{\kappa}}}\right)^2, \left( \frac{1-\frac{1}{\sqrt{2 \kappa}}}{1+\frac{1}{\sqrt{2\kappa}}} \right )^2  \right \}, \quad \text{where $\mathcal{C} \defas \zeta/(8(1-\zeta))$.}
% \end{equation}
% \end{prop}
The exact tradeoff in convergence rates \eqref{eq:lambda_choice} occurs when 
\begin{equation}\label{eq:zeta_threshold}
\frac{\mathcal{C}}{\bar{\kappa}} = \frac{1}{\sqrt{2 \kappa}}, \quad \text{or}\quad 
\zeta = 
\frac{\tfrac{8}{\sqrt{2}}\text{ICR}}
{
1+\tfrac{8}{\sqrt{2}}\text{ICR}
}.
\end{equation}
As $\zeta \leq 1,$ this condition is only nontrivial when $\text{ICR} \ll 1$, in which case $\zeta=\tfrac{8}{\sqrt{2}}\text{ICR}$, up to vanishing errors.

%\paragraph{Momentum beats SGD in large batch setting.} %\textcolor{red}{Add optimal learning rate and momentum parameter} 
\paragraph{Large batch ($\zeta \geq \text{ICR}$).} In this regime SGD+M's performance matches the performance of the heavy-ball algorithm with the Polyak momentum parameters (up to absolute constants).  More specifically with the choices of $\gamma$ and $\Delta$ in Proposition \ref{prop:lambda_delta_choice}, the linear rate of convergence of SGD+M is $1-\frac{c}{\sqrt{\kappa}}$ for an absolute $c$.  Note that $\zeta$ does not appear in the rate, and in particular there is no gain in convergence rate by increasing the batch fraction.

%More specifically, if we define the learning rate $\gamma$ and momentum parameters $\Delta$ as
%\begin{equation}\label{eq:gamma_choice}
%\gamma =   \frac{(1-\sqrt{\Delta})^2}{\zeta\sigma_{\min}^2}
%\quad\text{and}\quad
%\Delta = \left( \frac{1-\frac{1}{\sqrt{2 %\kappa}}}{1+\frac{1}{\sqrt{2\kappa}}} \right )^2,
%\end{equation}
%the convergence rate of SGD+M is given by $\left( \frac{1-\frac{1}{\sqrt{2 \kappa}}}{1+\frac{1}{\sqrt{2\kappa}}} \right )^2$.
%By the definition of the ICR, this implies that \textit{SGD+M schieves heavy-ball performance using the these parameter choices \eqref{eq:gamma_choice}.}  We formalize this statement below:

\paragraph{Small batch ($\zeta \leq \text{ICR}$).}
In the small batch regime, the value of $\mathcal{C}$ is relatively small and the first term is dominant in \eqref{eq:lambda_choice}, and so the linear rate of convergence of SGD+M is $1-\frac{c\zeta}{{\overline{\kappa}}}$ for some absolute constant $c>0$.  In this regime, there is still benefit in increasing the batch fraction, and the rate increases linearly with the fraction.  We note that on expanding the choice of constants in small $\zeta$ the choices made in Proposition \ref{prop:lambda_delta_choice} are 
\[
\Delta \approx 1-\frac{\zeta}{8\overline{\kappa}}
\quad\text{and}
\quad 
\gamma \approx \frac{\zeta}{256 \overline{\kappa}^2\sigma_{\min}^2}.
\]
This rate can also achieved by taking $\Delta=0$, i.e.\ mini-batch SGD with no momentum.  Moreover, it is not possible to beat this by using momentum; we show the following lower bound:
\begin{prop}\label{prop:lb}
If $\zeta \leq \min\{\tfrac 12, \text{ICR}\}$ then
there is an absolute constant $C>0$ so that for convergent $(\gamma, \Delta)$ (those satisfying Proposition \ref{prop:kernel_norm}),
\(
\sqrt{\lambda_{2,\max}} \geq 1 - \tfrac{C\zeta}{\overline{\kappa}}.
\)
\end{prop}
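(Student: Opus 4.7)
}
My plan is to exhibit a single coordinate $j_\star$ for which $\MM_{j_\star}$ has a large modulus eigenvalue, taking $j_\star$ to be an index with $\sigma_{j_\star}^2 = \sigma_{\min}^2$. First, I would recall from Appendix~\ref{subsec: f_dynamics} that $\MM_j$ arises from the mean two-step recursion $\E w_{t+1,j} = B_j \E w_{t,j} - \Delta \E w_{t-1,j}$, where $B_j \defas (1+\Delta) - \gamma\zeta\sigma_j^2$. A direct $3\times 3$ determinant computation of $\det(\MM_j - \lambda I)$ shows that its characteristic polynomial factors as $(\lambda - r_1^2)(\lambda - r_2^2)(\lambda - r_1 r_2)$, where $r_1, r_2$ are the roots of the scalar polynomial $p_j(x) \defas x^2 - B_j x + \Delta$. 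Since $|r_1 r_2| = \Delta \le \max(|r_1|,|r_2|)^2$, we obtain $\sqrt{|\lambda_{2,j}|} = \max(|r_1|,|r_2|)$, so the task reduces to lower bounding the largest-modulus root of $p_{j_\star}$.

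Second, I would convert the convergence hypothesis into a usable bound on $\epsilon \defas \gamma\zeta\sigma_{\min}^2$. The trace condition of Proposition~\ref{prop:kernel_norm} reads $\tfrac{(1-\zeta)\gamma}{1-\Delta}\cdot\tfrac{1}{n}\tr(\AA\AA^T) < 1$; substituting $\tfrac{1}{n}\tr(\AA\AA^T) = \bar{\kappa}\sigma_{\min}^2$ and using $1-\zeta \ge \tfrac12$ yields
\[
    \epsilon \;\le\; \frac{2\zeta(1-\Delta)}{\bar{\kappa}}.
\]

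Third, I would split on whether $p_{j_\star}$ has complex or real roots. In the complex regime ($B_{j_\star}^2 < 4\Delta$, equivalent to $\epsilon > (1-\sqrt{\Delta})^2$), both roots have modulus $\sqrt{\Delta}$; combining $(1-\sqrt{\Delta})^2 < \epsilon \le \tfrac{2\zeta(1-\sqrt{\Delta})(1+\sqrt{\Delta})}{\bar{\kappa}}$ and dividing by $1-\sqrt{\Delta}$ gives $\sqrt{\Delta} \ge 1 - \tfrac{4\zeta}{\bar{\kappa}}$. In the real regime, $p_{j_\star}(1) = \epsilon \ge 0$ together with $r_1 r_2 = \Delta < 1$ forces both roots into $(0,1]$; setting $u = 1-r_1 \le v = 1-r_2$, Vieta's formulas give $u+v = 1-\Delta+\epsilon$ and $uv = \epsilon$, whence $v \ge \tfrac{u+v}{2}$ and
\[
    u \;=\; \frac{\epsilon}{v} \;\le\; \frac{2\epsilon}{1-\Delta+\epsilon} \;\le\; \frac{2\epsilon}{1-\Delta} \;\le\; \frac{4\zeta}{\bar{\kappa}},
\]
so $r_1 \ge 1 - \tfrac{4\zeta}{\bar{\kappa}}$. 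In either regime $\sqrt{\lambda_{2,\max}} \ge \sqrt{|\lambda_{2,j_\star}|} \ge 1 - \tfrac{4\zeta}{\bar{\kappa}}$, which is the claim with $C = 4$.

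The hypothesis $\zeta \le \text{ICR}$ plays no role in the derivation; its purpose is interpretive, ensuring that $1-\tfrac{C\zeta}{\bar{\kappa}}$ is worse than the momentum rate $1-\tfrac{1}{\sqrt\kappa}$ so the proposition substantively precludes acceleration. The main technical hurdle is the determinant computation identifying $\text{spec}(\MM_j) = \{r_1^2, r_2^2, r_1 r_2\}$; once this is in hand the remaining argument is elementary quadratic algebra leveraging the trace condition. The degenerate case of a zero eigenvalue of $\AA\AA^T$ is handled trivially: there, $p_j$ has roots $1$ and $\Delta$, so $\MM_j$ already has $1$ as an eigenvalue and $\sqrt{\lambda_{2,\max}} = 1$.
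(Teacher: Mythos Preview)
Your argument is correct and in fact cleaner than the paper's. Both proofs split on whether $\lambda_{2,j_\star}$ is complex or real (equivalently, whether $\gamma$ is above or below $\gamma_2 \defas (1-\sqrt\Delta)^2/(\zeta\sigma_{\min}^2)$), and the complex case is handled identically. The difference is in the real case: the paper further splits according to which of the two constraints in Proposition~\ref{prop:kernel_norm} is binding, i.e.\ whether $\gamma_1 \defas (1+\Delta)/(\zeta\sigma_{\max}^2)$ or $\gamma_3 \defas (1-\Delta)/((1-\zeta)\bar\kappa\sigma_{\min}^2)$ is smaller, and then carries out a separate monotonicity/concavity analysis of the function $\Delta \mapsto \sqrt{\lambda_{2,\max}}$ in each sub-case. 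You avoid this entirely by (i) recognizing that the eigenvalues of $\MM_j$ are exactly $\{r_1^2, r_2^2, r_1 r_2\}$ for the roots of $p_j(x)=x^2-\Omega_j x+\Delta$, reducing the problem to bounding the larger root of a quadratic, and (ii) observing that the trace condition alone (via $\epsilon \le 2\zeta(1-\Delta)/\bar\kappa$) is enough, so the learning-rate condition and its associated sub-case never enter. The Vieta identity $uv=\epsilon$, $u+v=1-\Delta+\epsilon$ for $u=1-r_1$, $v=1-r_2$ then gives the bound in two lines. Your route yields $C=4$; the paper's case analysis gives $C=6$ in the worst sub-case. You are also right that the hypothesis $\zeta\le\text{ICR}$ is not used in either proof and serves only to make the bound meaningful relative to the accelerated rate.
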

This is a lower bound on the rate of convergence by Theorem \ref{thm : Malthusian}.
%\paragraph{Vanishing batch ($\zeta \ll \text{ICR}$).}
%In this case, you get SGD like behavior and the effective learning rate compared with SGD is given by $\gamma^{\text{SGD}} = \frac{\gamma^{\text{SGD+M}}}{1-\Delta}$ where $\gamma^{\text{SGD}}$ and $\gamma^{\text{SGD+M}}$ are the learning rates for SGD and SGD+M respectively \citep{paquette2021dynamics}. If we substitute the relevant quantities into this, we get $\gamma^{\text{SGD}} = \frac{\mathcal{C}}{\bar{\kappa} \zeta \sigma_{\min}^2} = \frac{1}{1/n\tr(\AA^T \AA)\cdot 8(1-\zeta)}$.  This is (up to constants) the optimal learning rate for vanishing batch SGD \citep{paquette2021dynamics}. %However, when the batch size $\zeta$ exceeds the threshold \eqref{eq:zeta_threshold}, we get the \textit{momentum effect} as the convergence rate goes to $\sim 1-\sqrt{\kappa}$.

%Figure \ref{fig:convergence_rates_vs_batch_sizes} illustrates these two different regimes of convergence rates in detail. For each fixed $r = d/n$, with a large batch, the convergence rate is fixated at the Polyak convergence rate. On the other hand, as the batch size decreases, convergence becomes slower.

\section{Conclusion and future work}
 We have shown that the SGD+M method on a least squares problem demonstrates deterministic behavior in the large $n$ and $d$ limit. We described the dynamics of this algorithm through a discrete Volterra equation and for a fixed batch fraction. Moreover we characterized a dichotomy of convergence regimes depending on the learning rate and momentum parameters. Furthermore, we proved that SGD+M shows a distinguishable improvement over SGD in the large batch regime and we provided parameters which achieve acceleration. Our theory is also supported by numerical experiments on the isotropic features model and MNIST data set (see Appendix~\ref{apx:app_numerical_simulations} for details).
 
 While our analysis focuses on SGD+M algorithm applied to the least squares problems with orthogonal invariant data matrix, Figure \ref{fig:mnist_motivation} suggests that the Volterra prediction might hold in even greater generality. Removing these conditions, we leave as future work. Another direction of future work consists in finding the deterministic dynamics for generalization errors.

\bibliographystyle{plainnat}
\bibliography{reference}

\begin{thebibliography}{27}
\providecommand{\natexlab}[1]{#1}
\providecommand{\url}[1]{\texttt{#1}}
\expandafter\ifx\csname urlstyle\endcsname\relax
  \providecommand{\doi}[1]{doi: #1}\else
  \providecommand{\doi}{doi: \begingroup \urlstyle{rm}\Url}\fi

\bibitem[Adamczak(2015)]{adamczak15}
R.~Adamczak.
\newblock {A note on the Hanson-Wright inequality for random vectors with
  dependencies}.
\newblock \emph{Electronic Communications in Probability}, 20\penalty0
  (none):\penalty0 1 -- 13, 2015.
\newblock \doi{10.1214/ECP.v20-3829}.
\newblock URL \url{https://doi.org/10.1214/ECP.v20-3829}.

\bibitem[Allen-Zhu(2017)]{allen2017katyusha}
Z.~Allen-Zhu.
\newblock \href{https://jmlr.org/papers/volume18/16-410/16-410.pdf}{Katyusha:
  The first direct acceleration of stochastic gradient methods}.
\newblock \emph{The Journal of Machine Learning Research}, 18\penalty0
  (1):\penalty0 8194--8244, 2017.

\bibitem[Asmussen(2003)]{asmussen03}
S.~Asmussen.
\newblock \emph{\href{https://link.springer.com/book/10.1007/b97236}{Applied
  probability and queues}}, volume~51 of \emph{Applications of Mathematics (New
  York)}.
\newblock Springer-Verlag, New York, second edition, 2003.
\newblock Stochastic Modelling and Applied Probability.

\bibitem[Bardenet and M.(2015)]{bardenet15}
R.~Bardenet and Odalric-Ambrym M.
\newblock {Concentration inequalities for sampling without replacement}.
\newblock \emph{Bernoulli}, 21\penalty0 (3):\penalty0 1361 -- 1385, 2015.
\newblock \doi{10.3150/14-BEJ605}.
\newblock URL \url{https://doi.org/10.3150/14-BEJ605}.

\bibitem[De et~al.(2017)De, Yadav, Jacobs, and Goldstein]{de17}
S.~De, A.~Yadav, D.~Jacobs, and T.~Goldstein.
\newblock \href{http://proceedings.mlr.press/v54/de17a/de17a.pdf}{Big Batch
  SGD: Automated Inference using Adaptive Batch Sizes}.
\newblock In \emph{Proceedings of the 20th International Conference on
  Artificial Intelligence and Statistics (AISTATS)}, 2017.

\bibitem[Flammarion and Bach(2015)]{flammarion15}
N.~Flammarion and F.~Bach.
\newblock From averaging to acceleration, there is only a step-size.
\newblock In Peter Grünwald, Elad Hazan, and Satyen Kale, editors,
  \emph{Proceedings of The 28th Conference on Learning Theory}, volume~40 of
  \emph{Proceedings of Machine Learning Research}, pages 658--695, Paris,
  France, 2015. PMLR.
\newblock URL \url{https://proceedings.mlr.press/v40/Flammarion15.html}.

\bibitem[Gadat et~al.(2018)Gadat, Panloup, and Saadane]{gadat18}
S.~Gadat, F.~Panloup, and S.~Saadane.
\newblock {Stochastic heavy ball}.
\newblock \emph{Electronic Journal of Statistics}, 12\penalty0 (1):\penalty0
  461 -- 529, 2018.
\newblock \doi{10.1214/18-EJS1395}.
\newblock URL \url{https://doi.org/10.1214/18-EJS1395}.

\bibitem[Goyal et~al.(2017)Goyal, Dollár, Girshick, Noordhuis, Wesolowski,
  Kyrola, Tulloch, Jia, and He]{goyal17}
P.~Goyal, P.~Dollár, R.~Girshick, P.~Noordhuis, L.~Wesolowski, A.~Kyrola,
  A.~Tulloch, Y.~Jia, and K.~He.
\newblock \href{https://arxiv.org/abs/1706.02677}{Accurate, Large Minibatch
  SGD: Training ImageNet in 1 Hour}, 2017.

\bibitem[Jain et~al.(2018)Jain, Kakade, Kidambi, Netrapalli, and
  Sidford]{jain2018accelerating}
P.~Jain, S.~Kakade, R.~Kidambi, P.~Netrapalli, and A.~Sidford.
\newblock
  \href{http://proceedings.mlr.press/v75/jain18a/jain18a.pdf}{Accelerating
  Stochastic Gradient Descent for Least Squares Regression}.
\newblock In \emph{Proceedings of the 31st Conference On Learning Theory
  (COLT)}, volume~75 of \emph{Proceedings of Machine Learning Research}, pages
  545--604. PMLR, 2018.

\bibitem[Kidambi et~al.(2018)Kidambi, Netrapalli, Jain, and
  Kakade]{kidambi2018}
R.~Kidambi, P.~Netrapalli, P.~Jain, and S.~Kakade.
\newblock On the insufficiency of existing momentum schemes for stochastic
  optimization.
\newblock In \emph{2018 Information Theory and Applications Workshop (ITA)},
  pages 1--9, 2018.
\newblock \doi{10.1109/ITA.2018.8503173}.

\bibitem[LeCun et~al.(2010)LeCun, Cortes, and Burges]{lecun2010mnist}
Y.~LeCun, C.~Cortes, and C.~Burges.
\newblock "mnist" handwritten digit database, 2010.
\newblock URL \url{http://yann. lecun. com/exdb/mnist}.

\bibitem[{Liu} and {Belkin}(2020)]{Liu2020accelerating}
C.~{Liu} and M.~{Belkin}.
\newblock \href{https://arxiv.org/pdf/1810.13395.pdf}{Accelerating SGD with
  momentum for over-parameterized learning}.
\newblock In \emph{Proceedings of the 37th International Conference on Machine
  Learning (ICML)}, 2020.

\bibitem[Loizou and Richtarik(2020)]{loizou20}
N.~Loizou and P.~Richtarik.
\newblock Momentum and stochastic momentum for stochastic gradient, newton,
  proximal point and subspace descent methods.
\newblock \emph{Comput Optim Appl}, 77:\penalty0 653--710, 2020.
\newblock URL \url{https://doi.org/10.1007/s10589-020-00220-z}.

\bibitem[Ma et~al.(2018)Ma, Bassily, and Belkin]{ma2018}
S.~Ma, R.~Bassily, and M.~Belkin.
\newblock The power of interpolation: Understanding the effectiveness of sgd in
  modern over-parametrized learning.
\newblock In \emph{International Conference on Machine Learning}, pages
  3325--3334. PMLR, 2018.

\bibitem[Marchenko and Pastur(1967)]{marchenko1967}
V.~Marchenko and L.~Pastur.
\newblock Distribution of eigenvalues for some sets of random matrices.
\newblock \emph{Mathematics of the USSR-Sbornik}, 1967.

\bibitem[McCandlish et~al.(2018)McCandlish, Kaplan, Amodei, and
  Team]{mccandlish18}
S.~McCandlish, J.~Kaplan, D.~Amodei, and OpenAI~Dota Team.
\newblock \href{https://arxiv.org/abs/1812.06162}{An Empirical Model of
  Large-Batch Training}, 2018.

\bibitem[Nesterov(2004)]{nesterov2004introductory}
Y.~Nesterov.
\newblock \emph{\href{http://dx.doi.org/10.1007/978-1-4419-8853-9}{Introductory
  lectures on convex optimization}}.
\newblock Springer, 2004.

\bibitem[Orvieto et~al.(2020)Orvieto, Kohler, and Lucchi]{orvieto20}
A.~Orvieto, J.~Kohler, and A.~Lucchi.
\newblock The role of memory in stochastic optimization.
\newblock In Ryan~P. Adams and Vibhav Gogate, editors, \emph{Proceedings of The
  35th Uncertainty in Artificial Intelligence Conference}, volume 115 of
  \emph{Proceedings of Machine Learning Research}, pages 356--366. PMLR, 2020.
\newblock URL \url{https://proceedings.mlr.press/v115/orvieto20a.html}.

\bibitem[Paquette and Paquette(2021)]{paquette2021dynamics}
C.~Paquette and E.~Paquette.
\newblock
  \href{https://papers.nips.cc/paper/2021/hash/4cf0ed8641cfcbbf46784e620a0316fb-Abstract.html}{Dynamics
  of Stochastic Momentum Methods on Large-scale, Quadratic Models}.
\newblock In \emph{Advances in Neural Information Processing Systems
  (NeurIPS)}, volume~34, 2021.

\bibitem[Paquette et~al.(2021)Paquette, Lee, Pedregosa, and
  Paquette]{paquette21a}
C.~Paquette, K.~Lee, F.~Pedregosa, and E.~Paquette.
\newblock Sgd in the large: Average-case analysis, asymptotics, and stepsize
  criticality.
\newblock In \emph{Proceedings of Thirty Fourth Conference on Learning Theory
  (COLT)}, volume 134 of \emph{Proceedings of Machine Learning Research}, pages
  3548--3626. PMLR, 2021.
\newblock URL \url{https://proceedings.mlr.press/v134/paquette21a.html}.

\bibitem[Pedregosa(2021)]{pedregosa2021residual}
F.~Pedregosa.
\newblock A hitchhiker's guide to momentum, 2021.
\newblock URL \url{http://fa.bianp.net/blog/2021/hitchhiker/}.

\bibitem[Polyak(1964)]{Polyak1962Some}
B.T. Polyak.
\newblock \href{https://doi.org/10.1016/0041-5553(64)90137-5}{Some methods of
  speeding up the convergence of iteration methods}.
\newblock \emph{USSR Computational Mathematics and Mathematical Physics}, 04,
  1964.

\bibitem[Sebbouh et~al.(2021)Sebbouh, Gower, and Defazio]{sebbouh21}
O.~Sebbouh, R.~Gower, and A.~Defazio.
\newblock Almost sure convergence rates for stochastic gradient descent and
  stochastic heavy ball.
\newblock In \emph{Proceedings of Thirty Fourth Conference on Learning Theory
  (COLT)}, volume 134 of \emph{Proceedings of Machine Learning Research}, pages
  3935--3971. PMLR, 2021.
\newblock URL \url{https://proceedings.mlr.press/v134/sebbouh21a.html}.

\bibitem[Smith et~al.(2018)Smith, Kindermans, Ying, and Le]{smith18}
S.L. Smith, P.-J. Kindermans, C.~Ying, and Q.~V. Le.
\newblock \href{https://openreview.net/forum?id=B1Yy1BxCZ}{Don't Decay the
  Learning Rate, Increase the Batch Size}.
\newblock In \emph{International Conference on Learning Representations
  (ICLR)}, 2018.

\bibitem[Sutskever et~al.(2013)Sutskever, Martens, Dahl, and
  Hinton]{sutskever13}
I.~Sutskever, J.~Martens, G.~Dahl, and G.~Hinton.
\newblock On the importance of initialization and momentum in deep learning.
\newblock In Sanjoy Dasgupta and David McAllester, editors, \emph{Proceedings
  of the 30th International Conference on Machine Learning (ICML)}, volume~28
  of \emph{Proceedings of Machine Learning Research}, pages 1139--1147,
  Atlanta, Georgia, USA, 2013. PMLR.
\newblock URL \url{https://proceedings.mlr.press/v28/sutskever13.html}.

\bibitem[Vershynin(2018)]{vershynin18}
R.~Vershynin.
\newblock \emph{High-Dimensional Probability: An Introduction with Applications
  in Data Science}.
\newblock Cambridge Series in Statistical and Probabilistic Mathematics.
  Cambridge University Press, 2018.
\newblock \doi{10.1017/9781108231596}.

\bibitem[Zhang et~al.(2019)Zhang, Li, Nado, Martens, Sachdeva, Dahl, Shallue,
  and Grosse]{zhang19}
G.~Zhang, L.~Li, Z.~Nado, J.~Martens, S.~Sachdeva, G.~Dahl, C.~Shallue, and
  R.~Grosse.
\newblock Which algorithmic choices matter at which batch sizes? insights from
  a noisy quadratic model.
\newblock In \emph{Advances in Neural Information Processing Systems
  (NeurIPS)}, volume~32, 2019.
\newblock URL
  \url{https://proceedings.neurips.cc/paper/2019/file/e0eacd983971634327ae1819ea8b6214-Paper.pdf}.

\end{thebibliography}

\newpage
\appendix
\begin{center}
\LARGE{{Trajectory of Mini-Batch Momentum:}}\\
\Large{Batch Size Saturation and Convergence in High Dimensions}\\
\vspace{0.5em}\Large{Appendix}
\vspace{0.5em}
\end{center}
The appendix is organized into 4 sections as follows:
\begin{enumerate}
   % \item Appendix A expands upon the data distribution.
    \item Appendix \ref{apx:proof_main_result} derives the Volterra equation and proves the main concentration for the dynamics of SGD+M (Theorem \ref{thm: concentration_main}).
    \item We show in Appendix \ref{apx:error_bounds} that the error terms associated with concentration of measure on the high-dimensional orthogonal group disappear in the large-$n$ limit. 
    \item Appendix~\ref{apx:proof_main_results} derives main results including Proposition \ref{prop:low_noise_regime} and speed up of convergence rate of SGD+M (Proposition \ref{prop:lambda_delta_choice}) in the large batch regime, as well as the lower bound convergence rate in the small batch regime (Proposition \ref{prop:lb}).
    \item Appendix~\ref{apx:app_numerical_simulations} contains details on the numerical simulations.
\end{enumerate}

%\section{Data distributions}
% \paragraph{Potential societal impacts.} The results presented in this paper concern the analysis of existing methods on a simple least squares problems. The results are theoretical and we do not anticipate any direct ethical and societal issues. We believe the results will be used by machine learning practitioners and we encourage them to use it to build a more just, prosperous world.

\paragraph{Notation.} In this paper, we adhere whenever possible to the following notation. We denote vectors in lowercase boldface $(\xx)$ and matrices in upper boldface $(\AA)$. The entries of a vector (or matrix) are denoted by subscripts. Unless otherwise specified, the norm $\|\cdot\|_2$ is taken to be the standard Euclidean norm if it is applied to a vector and the operator 2-norm if it is applied to a matrix. For a matrix $\AA$ and a vector $\bb$, we denote constants depending on $\AA$ and $\bb$, $C(\AA, \bb)$, as those bounded by an absolute constant multiplied by $\|\AA\|$ and $\|\bb\|$. We say an event $B$ holds \textit{with overwhelming probability} (w.o.p.) if, for every fixed $D > 0$, $\Pr(B) \ge 1 -C_D d^{-D}$ for some $C_D$ independent of $d$. Lastly, for $n\in\mathbb{N}$, $[n]$ denotes the set of natural numbers up to $n$, i.e., $[n] \defas \{1,2,\cdots,n\}$.

\section{Derivation of the dynamics of SGD+M} \label{apx:proof_main_result}
In this section, we establish the fundamental of the proof of Theorem \ref{thm: concentration_main}. Let us state the theorem in full detail first.

\begin{theorem}[Theorem \ref{thm: concentration_main}, detailed version]\label{thm:concentraion_main_detailed}
Suppose Assumptions~\ref{assumption: Vector} and \ref{assumption: spectral_density} hold with the learning rate  $\gamma < \frac{1+\Delta}{\zeta \sigma_{max}^2}$ and the batch size satisfies $\beta/n = \zeta$ for some $\zeta >0$. Let the constant $T \in \mathbb{N}$. Then there exists $C >0$ such that for any $c >0$, there exists $D>0$ satisfying
% and \textcolor{red}{any $D>0$}, there is a $C>0$ such that (\textcolor{red}{Condition on $\alpha$})
\begin{equation}
    \Pr \bigg [  \sup_{0 \le t \le T, t\in\mathbb{N}} |f(\xx_t)-\psi(t)| > n^{-C} \bigg ] \le Dn^{-c},
\end{equation}
% \begin{equation}
%   \sup_{0 \le t \le T}  \big | f \big ( \xx_{ %\lfloor  \frac{n}{\beta}t \rfloor 
%   t} \big ) -  \psi(t) \big | \Prto[n] 0\,,
% \end{equation}
for sufficiently large $n\in \mathbb{N}$. $\xx_t$ are the iterates of SGD+M and the function $\psi$ is the solution to the Volterra equation 
\begin{equation} \begin{gathered} \label{eq:Volterra_eq_main}
    % \psi_0(t+1) = \frac{rR}{2} h_1(t+1) + \frac{\tilde{R}}{2} (rh_0(t+1) + \textcolor{red}{(1-r)}) + \sum_{k=0}^t 
    % r\gamma^2\zeta(1-\zeta)H_2(t-k) \psi_0(k).\\
    \psi(t+1) = \underbrace{\frac{R}{2} h_1(t+1) + \frac{\tilde{R}}{2} h_0(t+1) }_{\text{forcing}}+ \underbrace{\sum_{k=0}^t 
    \gamma^2\zeta(1-\zeta)H_2(t-k) \psi(k)}_{\text{noise}},\ \text{and}\  
    \psi(0) = f(\xx_0),
    % \psi_0(t) = \tfrac{R}{2} h_1(t) + \tfrac{\widetilde{R}}{2} \big ( rh_0(t)+ (1-r) \big ) + \int_0^t \gamma^2 r h_2(t-s)\psi_0(s)\, \dif s,\\
    % \quad \text{and} \quad h_k(t) = \int_0^\infty x^k e^{-2\gamma tx} \, \dif \mu(x)\,
    % \quad \text{for all}\quad k \geq 0. 
\end{gathered}
\end{equation}
 where for $k=0,1$, 
\[
\begin{aligned}
    h_k(t) &= \frac{1}{n}\sum_{j=1}^n \frac{2(\sigma_j^2)^k}{\Omega_j^2-4\Delta} \left( -\Delta\gamma\zeta (\sigma_j^2) \cdot \Delta^{t} + \frac{1}{2}(\kappa_{2,j} - \Delta)^2\cdot (\lambda_{2,j})^{t} + \frac{1}{2}(\kappa_{3,j} - \Delta)^2\cdot (\lambda_{3,j})^{t}\right),
\end{aligned}
\]
and
\[
H_2(t) = \frac{1}{n}\sum_{j=1}^n \frac{2\sigma_j^4}{\Omega_j^2 -4\Delta} \Big(-\Delta^{t+1} + \frac{1}{2}\lambda_{2,j}^{t+1} + \frac{1}{2}\lambda_{3,j}^{t+1}\Big).
\]
Here $\Omega_j, \lambda_{2,j}, \lambda_{3,j}, \kappa_{2,j}, \kappa_{3,j}, j\in[n]$ are defined as
\begin{equation*}
\begin{aligned}
    \Omega_j &= 1-\gamma\zeta \sigma_j^2 + \Delta,\  \kappa_{2,j} = \frac{\lambda_{2,j}\Omega_j}{\lambda_{2,j} + \Delta},\  \kappa_{3,j} = \frac{\lambda_{3,j}\Omega_j}{\lambda_{3,j} + \Delta},\  \text{and}\\ 
    \lambda_{2,j} &= \frac{-2\Delta + \Omega_j^2 + \sqrt{\Omega_j^2(\Omega_j^2 - 4\Delta)}}{2},\  \lambda_{3,j} = \frac{-2\Delta + \Omega_j^2 - \sqrt{\Omega_j^2(\Omega_j^2 - 4\Delta)}}{2}.
\end{aligned}
\end{equation*}
\end{theorem}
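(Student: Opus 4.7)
The plan is to track the residual $\ww_t \defas \AA\xx_t - \bb$, since $f(\xx_t) = \tfrac12\|\ww_t\|_2^2$; from \eqref{eq:oursgd} it satisfies
\[
\ww_{t+1} = (1+\Delta)\ww_t - \gamma\,\AA\AA^T\PP_t\ww_t - \Delta\ww_{t-1}.
\]
Diagonalize $\AA\AA^T = \UU\LLambda\UU^T$ with $\LLambda = \diag(\sigma_1^2,\ldots,\sigma_n^2)$. By Assumption~\ref{assumption: spectral_density}, conditional on $\LLambda$, $\UU\in O(n)$ is Haar distributed and independent of $\xx_0 - \widetilde{\xx}$ and $\eeta$. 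Setting $\tilde{\ww}_t = \UU^T\ww_t$ and $\QQ_t \defas \UU^T\PP_t\UU$ gives
\[
\tilde{\ww}_{t+1} = (1+\Delta)\tilde{\ww}_t - \gamma\,\LLambda\,\QQ_t\tilde{\ww}_t - \Delta\tilde{\ww}_{t-1},
\]
in which $\QQ_t$ is (conditionally) a Haar-uniform rank-$\beta$ orthogonal projection on $\mathbb{R}^n$, independent of the past. This is the key simplification that makes spherical concentration applicable and that replaces the coordinate-aligned $\PP_t$ by an object amenable to eigenbasis computations.

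\textbf{Step 2: Per-eigenvalue 3-vector recurrence.} For each $j\in[n]$ introduce $\tilde{\mathcal{X}}_{t,j} = (\tilde w_{t,j}^2,\ \tilde w_{t-1,j}^2,\ \tilde w_{t,j}\tilde w_{t-1,j})^T$ as in \eqref{eq:W_system}. Substituting the recurrence of Step~1 at coordinate $j$ and splitting $\QQ_t = \zeta\II + (\QQ_t - \zeta\II)$ yields
\[
\tilde{\mathcal{X}}_{t+1,j} = \MM_j\,\tilde{\mathcal{X}}_{t,j} + \mathcal{E}_{t,j},
\]
where $\MM_j$ is the $3\times 3$ deterministic operator encoding the mean dynamics --- with eigenvalues $\{-\Delta,\lambda_{2,j},\lambda_{3,j}\}$ and eigenvector coordinates producing $\kappa_{2,j},\kappa_{3,j}$ as in the theorem statement --- and $\mathcal{E}_{t,j}$ collects all terms carrying $(\QQ_t - \zeta\II)$. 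Iterating gives the Duhamel representation
\[
\tilde{\mathcal{X}}_{t,j} = \MM_j^{\,t-1}\tilde{\mathcal{X}}_{1,j} + \sum_{s=1}^{t-1} \MM_j^{\,t-1-s}\mathcal{E}_{s,j}.
\]
Summing the first coordinate over $j$ with the weights $\sigma_j^{2k}/n$ and diagonalizing $\MM_j$ produces the forcing term $\tfrac{R}{2}h_1(t+1) + \tfrac{\widetilde R}{2}h_0(t+1)$ from the first summand; the initial condition $\tilde{\mathcal{X}}_{1,j}$ provides the prefactors $R$ and $\widetilde R$ via Assumption~\ref{assumption: Vector}.

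\textbf{Step 3: Conditional covariance reproduces the Volterra kernel.} For the second summand, compute $\E[\mathcal{E}_{s,j}\mid\mathcal{F}_{s-1}]$ and $\E[\mathcal{E}_{s,j}\mathcal{E}_{s,j}^T\mid\mathcal{F}_{s-1}]$ using the standard Haar-integration formulas for a uniformly random rank-$\beta$ projection; in particular $\Var((\QQ_s)_{jk}) = \tfrac{\zeta(1-\zeta)}{n}(1+o(1))$ with the off-diagonal covariance structure contributing an $O(n^{-1})$ coupling. The conditional mean of $\mathcal{E}_{s,j}$ vanishes, and the conditional covariance contributes a leading term proportional to $\sigma_j^4\cdot\tfrac{2\zeta(1-\zeta)}{n}f(\xx_{s-1})$. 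Replacing $f(\xx_{s-1})$ by its anticipated limit $\psi(s-1)$, summing over $j$ weighted by the first-row entries of $\MM_j^{\,t-1-s}$, and recognizing the spectral decomposition produces exactly $\sum_{k=0}^t \gamma^2\zeta(1-\zeta) H_2(t-k)\psi(k)$, the noise kernel of \eqref{eq:Volterra_eq_main_1}.

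\textbf{Step 4: Concentration uniform in $t$ --- the main obstacle.} What remains is to upgrade the (semi-)heuristic identity of Step~3 into the high-probability bound \eqref{eq:probability_convergence}, uniformly for $0\le t\le T$. For each fixed $(s,j)$, concentration of measure on $O(n)$ (Hanson--Wright-type inequalities for Haar projections, available via the log-Sobolev inequality on the sphere) bounds $|\mathcal{E}_{s,j}|$ by $n^{-C_1}$ with overwhelming probability; a union bound over $s,t \leq T$ and $j\leq n$ reduces matters to a deterministic perturbation analysis for the discrete Volterra equation. The hard part is that the noise kernel depends on the very trajectory we are trying to approximate, so propagating per-step errors requires a discrete Gronwall argument that exploits the kernel-norm bound $\|\K\| < 1$ of Proposition~\ref{prop:kernel_norm} (itself implied by the learning-rate hypothesis $\gamma < (1+\Delta)/(\zeta\sigma_{\max}^2)$ together with the trace condition) and the power-boundedness of $\MM_j$ on the relevant subspace. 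This closes the loop and delivers the polynomial tail $Dn^{-c}$; I expect this uniform-in-$t$ control of the nonlinear feedback --- rather than any single-step estimate --- to be the main technical hurdle and the place where the constant $C$ in \eqref{eq:probability_convergence} is set.
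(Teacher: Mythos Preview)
Your proposal has the right overall architecture (change of basis, $3\times 3$ matrix recurrence, Duhamel representation, then concentration of errors), but there is a genuine gap in Step~1 that propagates through Steps~3--4.

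\textbf{The main gap: $\QQ_t$ is not conditionally Haar.} You claim that $\QQ_t = \UU^T\PP_t\UU$ is ``(conditionally) a Haar-uniform rank-$\beta$ orthogonal projection on $\mathbb{R}^n$, independent of the past.'' This is false. The matrix $\UU$ is determined by $\AA$ once and for all; it is the \emph{same} $\UU$ at every step and it already enters $\tilde\ww_0,\tilde\ww_1,\ldots,\tilde\ww_{t-1}$. Hence, conditional on $\mathcal{F}_{t-1}$, the only fresh randomness is the coordinate projection $\PP_t$, and $\QQ_t$ is the conjugation of a uniformly random \emph{coordinate} projection by a \emph{fixed} orthogonal matrix. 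That is not Haar, and the $\QQ_t$'s are correlated across $t$ through $\UU$. The paper therefore does the opposite conditioning: it fixes $\UU$ and computes the conditional mean over the batch $B_t$, obtaining
\[
\E\big[\textstyle\big(\sum_{l} w_{t,l}\sum_{i\in B_t}U_{ij}U_{il}\big)^2\,\big|\,\mathcal{F}_t\big]
=\Gamma_j^2 w_{t,j}^2+\frac{(1-\zeta)\gamma\sigma_j^2\Gamma_j}{n}\sum_{l}w_{t,l}^2+\EE_{KL}^{(j)}(t)+\EE_{beta}^{(j)}(t),
\]
so the Volterra kernel $\gamma^2\zeta(1-\zeta)H_2$ appears as part of the \emph{conditional mean} of the quadratic term (not its covariance), together with an explicit error $\EE_{KL}^{(j)}(t)=\gamma^2\sigma_j^4\zeta(1-\zeta)\sum_i(U_{ij}^2-\tfrac1n)(\UU\ww_t)_i^2$ that is \emph{only then} controlled by the Haar property of $\UU$ (the ``Key Lemma''). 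Your spherical-concentration toolbox (log-Sobolev on $O(n)$) is the right one for $\EE_{KL}$ but not for the per-step batch fluctuations, which the paper handles with Bernstein and Hanson--Wright inequalities for sampling \emph{without replacement}.

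\textbf{A second issue in Step~4.} Your closing Gronwall argument invokes $\|\K\|<1$, which by Proposition~\ref{prop:kernel_norm} requires the trace condition in addition to the learning-rate bound; the theorem does not assume the trace condition. The paper avoids this entirely: since $T$ is fixed, it introduces the stopping time $\vartheta=\inf\{t:\|\ww_t\|>n^\theta\}$, proves $\vartheta>T$ w.o.p.\ by a crude deterministic bound $\|\ww_{t+1}\|\le C\|\ww_t\|+\Delta\|\ww_{t-1}\|$, and then bounds each of the four error pieces $\EE_{IC},\EE_{beta},\EE_{KL},\EE_M$ directly on $\{t\le T\wedge\vartheta\}$. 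There is no feedback/Gronwall step; the ``nonlinear'' coupling you worry about is already absorbed into the Volterra recursion itself once the $\varphi_j^{(n)}\sum_l w_{t,l}^2$ term is extracted from the conditional mean.
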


\subsection{Change of basis}
Consider the singular value decomposition of $\AA = \UU \SSigma \VV^T$, where $\UU$ and $\VV$ are orthogonal matrices, i.e.\ $\VV \VV^T = \VV^T \VV = \II$ and $\SSigma$ is the $n \times d$ singular value matrix with diagonal entries $\diag(\sigma_j), j=1,\ldots,n$ (in the case $n>d$, we extend the set of singular values so that $\sigma_{d+1} = \cdots = \sigma_{n} = 0$).  We define the spectral weight vector $\nnu_k \defas \VV^T (\xx_k-\widetilde{\xx}),$ which therefore evolves like
\begin{equation}\label{eq:main_nuk}
    \nnu_{k+1} = \nnu_k 
    - \gamma \SSigma^T
    \UU^T \PP_k
    (\UU\SSigma \nnu_k - \eeta) + \Delta (\nnu_k - \nnu_{k-1}).
\end{equation}
Moreover, we can define
\begin{equation}\label{eq:w}
    \ww_{k}:=\SSigma\nnu_{k} - \UU^T\eeta ,
\end{equation}
so that
\begin{equation}\label{eq:f_sum}
f(\xx_t) = \frac{1}{2}\|\SSigma\nnu_t - \UU^T\eeta\|_2^2 = \frac{1}{2}\sum_{j=1}^n \ww_{t,j}^2. %= \frac{1}{2}\sum_{j=1}^d \ww_{t,j}^2  \textcolor{red}{+\mathbbm{1}_{n>d}\cdot\frac{\tilde{R}}{2n}(n-d)},
\end{equation}
Then \eqref{eq:main_nuk} can be translated as
\begin{equation}\label{eq:w_recurrence}
\ww_{k+1} = \ww_k - \gamma\SSigma\SSigma^T\UU^T\PP_k\UU\ww_k + \Delta (\ww_{k} - \ww_{k-1}).
\end{equation}

From this point, we focus on the evolution of $\ww$ rather than the iterates $\xx$.

\subsection{Evolution of $f$}\label{subsec: f_dynamics}

Now we would like to demonstrate the recurrence relation of $\ww_k$ and eventually that of $f(t)$, which will lead to a Volterra equation and error terms in a large scale. First, for $j \in [n]$ and $t\in \mathbb{N}$, \eqref{eq:w_recurrence} implies that
\begin{equation}\label{eq:w_recurrence_coord}
w_{t+1,j} = w_{t,j} - \gamma\sigma_j^2\sum_l w_{t,l} (\sum_{i  \in B_t} U_{ij}U_{il}) + \Delta(w_{t,j}-w_{t-1,j}),
\end{equation}
where $B_t = B$ denotes a randomly chosen mini-batch at the $t$-th iteration, whose size is given by $\beta \le n$. We interchangeably use the notation of $B_t$ and $B$, because it is independently chosen at each iteration. By taking squares on both sides, we have
\[
\begin{aligned}
w_{t+1,j}^2 &= \left( w_{t,j} - \gamma\sigma_j^2\sum_{l \in [n]} w_{t,l} (\sum_{i  \in B_t} U_{ij}U_{il}) + \Delta(w_{t,j}-w_{t-1,j}) \right)^2 \\
&= w_{t,j}^2 + \gamma^2\sigma_j^4 \big(\sum_{l\in [n]} w_{t,l} (\sum_{i\in B_t} U_{ij}U_{il} )\big)^2 -2\gamma\sigma_j^2 w_{t,j}\sum_{l\in [n]} w_{t,l} (\sum_{i\in B_t} U_{ij}U_{il} )\\
 &\quad + \Delta^2 (w_{t,j} - w_{t-1,j})^2
+ 2\Delta w_{t,j }(w_{t,j} - w_{t-1,j})\\
& \quad -2\gamma \sigma_j^2 \Delta \sum_{l\in [n]} w_{t,l} (\sum_{i\in B_t} U_{ij}U_{il})(w_{t,j} - w_{t-1,j}).
\end{aligned}
\]
Now let us denote the following error caused by mini-batching, i.e.,
\begin{equation}\label{eq:error_mini_batching}
\EE_B^{(l,j)} \defas \sum_{i\in B}U_{il}U_{ij}-\frac{\beta}{n}\delta_{l,j}.
\end{equation}
where $\delta_{l,j}$ is the Kronecker-delta symbol, meaning
\[
\text{For}\ l,j\in[n],\ \delta_{l,j} = 1\quad \text{if}\ l=j,\ \text{and}\ 0\ \text{otherwise}.
\]
Then the iteration on $w_{t+1}^2$ reduces to
\begin{align*}
    w_{t+1,j}^2 &= w_{t,j}^2 (1+\Delta^2 + 2\Delta) + w_{t-1,j}^2 \Delta^2 + w_{t,j}w_{t-1,j}(-2\Delta^2 - 2\Delta)\\
     &\qquad -2\gamma\sigma_j^2w_{t,j} \sum_{l\in [n]} w_{t,l} (\EE_B^{(l,j)} + \frac{\beta}{n}\delta_{l,j}) \\
    & \qquad - 2\gamma \sigma_j^2 \Delta \sum_{l\in [n]} w_{t,l} (w_{t,j}-w_{t-1,j}) (\EE_B^{(l,j)} + \frac{\beta}{n}\delta_{l,j})
    + \gamma^2\sigma_j^4 \big(\sum_{l\in [n]} w_{t,l} (\sum_{i\in B} U_{ij}U_{il} )\big)^2\\ 
    &= w_{t,j}^2 (1+\Delta^2 + 2\Delta -2\gamma\sigma_j^2\frac{\beta}{n} - 2\Delta\gamma\sigma_j^2\frac{\beta}{n}) + w_{t-1,j}^2 \Delta^2\\
    & \qquad + w_{t,j}w_{t-1,j}(-2\Delta^2-2\Delta+2\Delta\gamma\sigma_j^2\frac{\beta}{n})\\ 
    & \qquad +  \underbrace{\gamma^2\sigma_j^4\big(\sum_{l\in [n]} w_{t,l}(\sum_{i\in B} U_{ij}U_{il} )\big)^2}_{\defas \text{\textcircled{1}}}  
    +  \underbrace{\left(-2\gamma\sigma_j^2w_{t,j} \sum_{l\in [n]}\EE_B^{(l,j)} w_{t,l} \right)}_{\defas \EE_{B,1}^{(j)}(t)}\\
    & \qquad + \underbrace{\left( - 2\gamma \sigma_j^2 \Delta \sum_{l\in [n]} \EE_B^{(l,j)} w_{t,l} (w_{t,j}-w_{t-1,j}) \right)}_{ \defas \EE_{B,2}^{(j)}(t)}.
\end{align*}

When it comes to \textcircled{1}, we can decompose it into its expectation over the mini-batch $B$ and the error generated by it. By applying the technique from \citep[Lemma 8]{paquette21a}, we have
\begin{align*}
    \mathbb{E}[\text{\textcircled{1}}|\mathcal{F}_t] &= \gamma^2\sigma_j^4 \left[ \frac{\beta(\beta-1)}{n(n-1)}w_{t,j}^2 + \left( \frac{\beta}{n} - \frac{\beta(\beta-1)}{n(n-1)}\right) \sum_{i\in[n]} U_{ij}^2\left(\sum_{l\in[n]} U_{il}w_{t,l}\right)^2 \right]\\
    & = \Gamma_j^2 w_{t,j}^2 + \frac{(1 - \zeta)\gamma\sigma_j^2\Gamma_j}{n} \sum_{l\in[n]} w_{t,l}^2 + \EE_{KL}^{(j)}(t) + \EE_{beta}^{(j)}(t),
\end{align*}
where
\begin{gather*}
\Gamma_j \defas \gamma\zeta\sigma_j^2,\\
\EE_{beta}^{(j)}(t) \defas \gamma^2\sigma_j^4 \left[ \left( \frac{\beta(\beta-1)}{n(n-1)} - \zeta^2 \right) w_{t,j}^2 + \left( - \frac{\beta(\beta-1)}{n(n-1)} + \zeta^2 \right) \sum_{i\in[n]} U_{ij}^2\left(\sum_{l\in[n]} U_{il}w_{t,l}\right)^2\right],\ \\
\text{and} \quad \EE_{KL}^{(j)}(t) \defas \gamma^2\sigma_j^4(\zeta-\zeta^2) \sum_{i\in[n]} (U_{ij}^2 - \frac{1}{n} ) \left(\sum_l U_{il}w_{t,l}\right)^2.
\end{gather*}
Note that $\EE_{beta}^{(j)}(t)$ is generated by the error between $\beta(\beta-1)/(n(n-1))$ and $\zeta^2 = \beta^2/n^2$, whereas $\EE_{KL}^{(j)}(t)$ is generated by the replacement of $U_{ij}^2$ by $1/n$; In Appendix~\ref{sec:martingale_error}, we establish that this error can be bounded by the \textit{Key Lemma} (this is where the acronym ``$KL$'' comes from). Let $\EE_{B^2}^{(j)}(t) \defas \text{\textcircled{1}} - \mathbb{E}[\text{\textcircled{1}}|\mathcal{F}_t]$. %(same for $\textcircled{2}$)
Then observe
\begin{align*}
    \text{\textcircled{1}}
    &= \Gamma_j^2 w_{t,j}^2  + \frac{(1-\zeta)\gamma\sigma_j^2\Gamma_j}{n}  \sum_{l\in[n]} w_{t,l}^2  + \EE_{B^2}^{(j)}(t)+ \EE_{beta}^{(j)}(t) + \EE_{KL}^{(j)}(t).
\end{align*}
%where $\Gamma_j = \gamma\zeta\sigma_j^2$.
Therefore, we obtain
\begin{equation}\label{eq:w_recurrence_square}
\begin{aligned}
w_{t+1,j}^2 &= \Omega_j^2w_{t,j}^2 + \Delta^2w_{t-1,j}^2 -2\Delta\Omega_j w_{t,j}w_{t-1,j} + \frac{(1-\zeta)\gamma\sigma_j^2\Gamma_j}{n} \sum_{l\in[n]} w_{t,l}^2\\
& \qquad + \EE_{beta}^{(j)}(t) + \EE_{KL}^{(j)}(t) + \EE_{B}^{(j)}(t),
\end{aligned}
\end{equation}
where
\[
\EE_{B}^{(j)}(t) \defas \EE_{B^2}^{(j)}(t)+  \EE_{B,1}^{(j)}(t) + \EE_{B,2}^{(j)}(t).
\]
% and
% \[
% \begin{aligned}
% \EE_{beta}^{(j)}(t) &= \EE_{beta,\text{\textcircled{1}}}^{(j)}(t) + w_{t,j}^2 (-2\gamma\sigma_j^2\frac{\beta}{n} -2\Delta\gamma\sigma_j^2\frac{\beta}{n}+ 2\Gamma_j + 2\Delta\Gamma_j) + w_{t,j}w_{t-1,j}(-2\Delta\gamma\sigma_j^2\frac{\beta}{n} + 2\Delta\Gamma_j)\\
% &= \EE_{beta,\text{\textcircled{1}}}^{(j)}(t) + (w_{t,j}^2(2+2\Delta) + 2\Delta w_{t,j}w_{t-1,j} )\EE_{beta, \Gamma_j},
% \end{aligned}
% \]
% where $\EE_{beta, \Gamma_j} = \Gamma_j - \gamma\sigma_j^2\frac{\beta}{n}$.

Similarly, we have
\begin{align}\label{eq:w_recurrence_cross}
\begin{split}
w_{t+1,j}w_{t,j}&= w_{t,j}(w_{t,j} - \gamma\sigma_j^2\sum_{l\in [n]} w_{t,l} (\sum_{i\in B_t} U_{ij}U_{il}) + \Delta(w_{t,j}-w_{t-1,j}))\\
&= w_{t,j}^2 - \gamma\sigma_j^2w_{t,j}\sum_{l \in [n]} w_{t,l}  (\EE_B^{(l,j)} + \frac{\beta}{n}\delta_{l,j}) + \Delta w_{t,j}(w_{t,j}-w_{t-1,j})\\
&= \Omega_j w_{t,j}^2 - \Delta w_{t,j}w_{t-1,j} 
% + \underbrace{w_{t,j}^2 \EE_{beta, 
% \Gamma_j}}_{\EE_{beta,cross}^{(j)}(t)} 
- \underbrace{ \gamma\sigma_j^2w_{t,j}\sum_l \EE_{B}^{(l,j)} w_{t,l}}_{=\frac{1}{2} \EE_{B,1}^{(j)}(t)},
\end{split}
\end{align}
where $\Omega_j \defas 1 - \Gamma_j + \Delta$.

Therefore, \eqref{eq:w_recurrence_square} and \eqref{eq:w_recurrence_cross} imply
\begin{equation}\label{eq:system_iter_w}
\begin{pmatrix}
w_{t+1,j}^2 \\
w_{t,j}^2 \\
w_{t+1,j}w_{t,j}
\end{pmatrix} = \underbrace{\begin{pmatrix}
\Omega_j^2 &\Delta^2 &-2\Delta\Omega_j\\
1 & 0 & 0\\
\Omega_j & 0 & -\Delta 
\end{pmatrix}}_{\defas \MM_j}
\underbrace{\begin{pmatrix}
w_{t,j}^2 \\
w_{t-1,j}^2 \\
w_{t,j}w_{t-1,j}
\end{pmatrix}}_{\defas \tilde{\mathcal{X}}_{t,j}}+
\underbrace{\begin{pmatrix}
\tilde{N}_{t,j} + \EE_1^{(j)}(t)\\
0\\
\EE_2^{(j)}(t)
\end{pmatrix}}_{\defas \tilde{\mathcal{Y}}_{t,j}},
\end{equation}
where
\begin{gather*}
\tilde{N}_{t,j} \defas \frac{(1-\zeta)\gamma\sigma_j^2\Gamma_j}{n} \sum_l w_{t,l}^2 = \varphi_j^{(n)} \sum_l w_{t,l}^2,\  \text{with}\ \varphi_j^{(n)} \defas \frac{(1-\zeta)\gamma\sigma_j^2\Gamma_j}{n},\\
\EE_1^{(j)}(t) \defas \EE_{beta}^{(j)}(t) + \EE_{KL}^{(j)}(t) + \EE_{B}^{(j)}(t),\ \text{and}\\
\EE_2^{(j)}(t) \defas - \frac{1}{2} \EE_{B,1}^{(j)}(t).
\end{gather*}

Let us rewrite \eqref{eq:system_iter_w} as
\[
\begin{aligned}
\tilde{\mathcal{X}}_{t+1,j} &= \MM_j\tilde{\mathcal{X}}_{t,j} + \tilde{\mathcal{Y}}_{t,j}\\
&= \MM_j^2 \tilde{\X}_{t-1,j} + \MM_j\tilde{\Y}_{t-1,j} + \tilde{\Y}_{t,j}\\
&= \MM_j^t\tilde{\X}_{1,j} + \sum_{k=1}^t \MM_j^{t-k}\tilde{\Y}_{k,j}.
\end{aligned}
\]

%\textcolor{red}{(Need to insert diagonalization of $\MM_j$)}
The eigendecomposition of $\MM_j$ is given by $\MM_j = \XX_j\Lambda_j \XX_j^{-1}$, $\Lambda_j = \diag (\lambda_{1,j},\lambda_{2,j},\lambda_{3,j})$, $\XX_j = (x_{1,j}, x_{2,j}, x_{3,j})^T$ where 

\begin{equation}
\begin{gathered} \label{eq:lambda_values}
\lambda_{1,j} = \Delta, \lambda_{2,j} = \frac{-2\Delta + \Omega_j^2 + \sqrt{(\Omega_j^2)(\Omega_j^2-4\Delta)}}{2},
\lambda_{3,j} = \frac{-2\Delta + \Omega_j^2 - \sqrt{(\Omega_j^2)(\Omega_j^2-4\Delta)}}{2},\\
%= -\Delta + \Omega_j\mu_{1,2}\footnote{Assuming $\Omega_j > 0.$} = \textcolor{red}{(\mu_{1,2})^2}\ 
\text{and}\ \XX_j = \begin{pmatrix}
\Delta & \lambda_{2,j} & \lambda_{3,j}\\
1 & 1 & 1\\
\frac{\Omega_j}{2} & \kappa_{2,j} & \kappa_{3,j}
\end{pmatrix}\ \text{with}\ \kappa_{i,j} = \frac{\lambda_i \Omega_j}{\lambda_i + \Delta}.
\end{gathered}
\end{equation}

%\textcolor{red}{(regardless of the sign of $\Omega_j$)} 
Also, its inverse, assuming $\det \XX_j \neq 0$, is given by 
\[
\XX_j^{-1} = \frac{2}{\Omega_j^2-4\Delta}\begin{pmatrix}
- 1 & - \Delta & \Omega_j\\
\frac{1}{2} & \frac{\lambda_{3,j}}{2} & -\kappa_{3,j}\\
\frac{1}{2} & \frac{\lambda_{2,j}}{2} & -\kappa_{2,j}
\end{pmatrix}.
\]

Note that for each $j\in [n]$ and $i\in \{2,3\}$, $\kappa_{i,j}$ satisfies 
\begin{itemize}
    \item $\kappa_{i,j}^2 = \lambda_{i,j}$ and $\kappa_{i,j} = \sqrt{\lambda_{i,j}}$ when $\Omega_j \ge 0$,
    \item $\kappa_{2,j} + \kappa_{3,j} = \Omega_j$, and
    \item $\kappa_{2,j}\kappa_{3,j} = \Delta$.
\end{itemize}
This implies that
\[
\begin{aligned}
\MM_j^{t-k}\tilde{\Y}_{k,j} &= \XX_j\Lambda_j^{t-k}\XX_j^{-1} \begin{pmatrix}
\tilde{N}_{k,j} + \EE_1^{(j)}(k)\\
0\\
\EE_2^{(j)}(k)
\end{pmatrix}\\
&= \XX_j \cdot \frac{2}{\Omega_j^2-4\Delta} \begin{pmatrix}
-\lambda_{1,j}^{t-k} \left(\tilde{\mathcal{N}}_{k,j} +\EE_1^{(j)}(k)\right) + \Omega_j \lambda_{1,j}^{t-k}\EE_2^{(j)}(k) \\
\frac{1}{2}\lambda_{2,j}^{t-k}  \left(\tilde{\mathcal{N}}_{k,j} +\EE_1^{(j)}(k)\right) -\kappa_{3,j} \lambda_{2,j}^{t-k}\EE_2^{(j)}(k) \\
\frac{1}{2}\lambda_{3,j}^{t-k}  \left(\tilde{\mathcal{N}}_{k,j} +\EE_1^{(j)}(k)\right) -\kappa_{2,j} \lambda_{3,j}^{t-k}\EE_2^{(j)}(k)
\end{pmatrix}.
\end{aligned}
\]

In particular, if we just focus on the (first coordinate of $\tilde{\mathcal{X}}_{t+1,j}) = w_{t+1,j}^2$, we have
\[
\begin{aligned}
w_{t+1,j}^2 &= ( \MM_j^t\tilde{\mathcal{X}}_{1,j})_1 + \frac{2}{\Omega_j^2-4\Delta} \sum_{k=1}^t (-\lambda_{1,j}\cdot \lambda_{1,j}^{t-k} + \frac{\lambda_{2,j}}{2}\cdot\lambda_{2,j}^{t-k} + \frac{\lambda_{3,j}}{2} \cdot\lambda_{3,j}^{t-k})\varphi_j^{(n)} \sum_{l\in[n]} w_{k,l}^2\\
&+ \frac{2}{\Omega_j^2-4\Delta} \sum_{k=1}^t (-\lambda_{1,j}\cdot\lambda_{1,j}^{t-k}+ \frac{\lambda_{2,j}}{2}\cdot \lambda_{2,j}^{t-k} + \frac{\lambda_{3,j}}{2}\cdot \lambda_{3,j}^{t-k}) \EE_1^{(j)}(k) \\
&+ \frac{2}{\Omega_j^2-4\Delta} \sum_{k=1}^t (\Omega_j \lambda_{1,j}\cdot\lambda_{1,j}^{t-k}-\kappa_{3,j}\lambda_{2,j}\cdot\lambda_{2,j}^{t-k} -\kappa_{2,j}\lambda_{3,j}\cdot \lambda_{3,j}^{t-k})\EE_2^{(j)}(k)
\end{aligned}
\]
( Here $(\cdot)_1$ denotes the first coordinate of a vector). Summing over $j \in[n]$ and dividing both sides by 2 gives
\begin{align}\label{eq:voltera_w}
\begin{split}   
    \frac{1}{2}&\sum_{j=1}^{n} w_{t+1,j}^2 = \frac{1}{2}\sum_{j=1}^{n} ( \MM_j^t\tilde{\mathcal{X}}_{1,j})_1\\
    & + \sum_{k=1}^t \left(\sum_{j=1}^{n} \frac{2\varphi_j^{(n)}}{\Omega_j^2-4\Delta} (-\Delta\cdot\lambda_{1,j}^{t-k} + \frac{\lambda_{2,j}}{2}\cdot\lambda_{2,j}^{t-k} + \frac{\lambda_{3,j}}{2}\cdot\lambda_{3,j}^{t-k}) \right) f(k)\\
    &+ \sum_{k=1}^t \left(\sum_{j=1}^{n} \frac{1}{\Omega_j^2-4\Delta} (-\Delta\cdot\lambda_{1,j}^{t-k} + \frac{\lambda_{2,j}}{2}\cdot\lambda_{2,j}^{t-k} + \frac{\lambda_{3,j}}{2}\cdot\lambda_{3,j}^{t-k})\EE_1^{(j)}(k) \right)\\
    &+ \sum_{k=1}^t \left(\sum_{j=1}^{n} \frac{1}{\Omega_j^2-4\Delta} (\Omega_j \lambda_{1,j}\cdot\lambda_{1,j}^{t-k}-\kappa_{3,j}\lambda_{2,j}\cdot\lambda_{2,j}^{t-k} -\kappa_{2,j}\lambda_{3,j}\cdot \lambda_{3,j}^{t-k})\EE_2^{(j)}(k) \right).
\end{split}    
\end{align}

Note that $\sum_{j=1}^{n} ( \MM_j^t\tilde{\mathcal{X}}_{1,j})_1$ describes the \textit{forcing} term (see Section \ref{sec:dynamics}). In order to analyze this term, observe
\[
\begin{aligned}
\tilde{\mathcal{X}}_{1,j} &= \begin{pmatrix}
w_{1,j}^2\\
w_{0,j}^2\\
w_{1,j}w_{0,j}
\end{pmatrix} = \begin{pmatrix}
(1-\Gamma_j)^2 w_{0,j}^2 + \varphi_j^{(n)} \sum_l w_{0,l}^2 + \EE_{beta}^{(j)}(0) + \EE_{KL}^{(j)}(0) + \EE_B^{(j)}(0)\\
w_{0,j}^2\\
(1-\Gamma_j)w_{0,j}^2 - \frac{1}{2}\EE_{B,1}^{(j)}(0)
\end{pmatrix}\\ 
&= 
\begin{pmatrix}
(1-\Gamma_j)^2 \Big(\frac{R}{n}\sigma_j^2 + \frac{\tilde{R}}{n} \Big) +2\varphi_j^{(n)} f(0)%}_{\to\text{attributed to}\ f(0)} 
+ \EE_{beta}^{(j)}(0) + \EE_{KL}^{(j)}(0) + \EE_B^{(j)}(0) + (1-\Gamma_j)^2\EE_{w_0}^{(j)}\\
\sigma_j^2\frac{R}{n} +\frac{\tilde{R}}{n} + \EE_{w_0}^{(j)}\\
(1-\Gamma_j)\Big( \frac{R}{n}\sigma_j^2 + \frac{\tilde{R}}{n} \Big) + (1-\Gamma_j)\EE_{w_0}^{(j)} - \frac{1}{2}\EE_{B,1}^{(j)}(0)
\end{pmatrix} \\
&= \Big(\frac{R}{n}\sigma_j^2 + \frac{\tilde{R}}{n}\Big) 
\begin{pmatrix}
(1-\Gamma_j)^2\\
1\\
(1-\Gamma_j)
\end{pmatrix} +
\begin{pmatrix}
2\varphi_j^{(n)} f(0) + \EE^{(j)}_1(0) \\
0\\
0
\end{pmatrix} + \EE_{w_0}^{(j)}
\begin{pmatrix}
(1-\Gamma_j)^2\\
1\\
(1-\Gamma_j)
\end{pmatrix} + \EE_2^{(j)}(0) \begin{pmatrix}
0\\
0\\
1
\end{pmatrix},
\end{aligned}
\]
where
\begin{equation}
\begin{gathered}
    \EE_{w_0}^{(j)} \defas w_{0,j}^2 - \mathbb{E}[w_{0,j}^2] = w_{0,j}^2 - \Big(\frac{R}{n}\sigma_j^2 + \frac{\tilde{R}}{n} \Big),\\
\EE^{(j)}_1(0) = \EE_{beta}^{(j)}(0) + \EE_{KL}^{(j)}(0) + \EE_B^{(j)}(0), \quad\text{and}\ \EE_2^{(j)}(0) = -\frac{1}{2}\EE_{B,1}^{(j)}(0).
\end{gathered}
\end{equation}

Therefore, by using the eigendecomposition of $\MM_j$ again, the first coordinate of $\MM_j^t\tilde{\mathcal{X}}_{1,j}$ is given by
\[
\begin{aligned}
&(\MM_j^t\tilde{\mathcal{X}}_{1,j})_1
= \Bigg[ \XX_j\Lambda_j^t \begin{pmatrix}
-(1-\Gamma_j)^2 - \Delta + \Omega_j(1-\Gamma_j)\\
\frac{1}{2}(1-\Gamma_j)^2 + \frac{\lambda_{3,j}}{2} - \kappa_{3,j}(1-\Gamma_j)\\
\frac{1}{2}(1-\Gamma_j)^2 + \frac{\lambda_{2,j}}{2} - \kappa_{2,j}(1-\Gamma_j)
\end{pmatrix}\cdot \frac{2}{\Omega_j^2-4\Delta} \Big(\frac{R}{n}\sigma_j^2 + \frac{\tilde{R}}{n} + \EE_{w_0}^{(j)}\Big)\\
& \qquad + \XX_j\Lambda_j^t \begin{pmatrix}
-1\\
1/2\\
1/2
\end{pmatrix} \cdot \frac{2}{\Omega_j^2-4\Delta} \Big( 2\varphi_j^{(n)} f(0) + \EE^{(j)}_1(0) \Big) + \XX_j\Lambda_j^t \begin{pmatrix}
\Omega_j\\
-\kappa_{3,j}\\
-\kappa_{2,j}
\end{pmatrix}\cdot \frac{2}{\Omega_j^2-4\Delta} \EE_2^{(j)}(0) \Bigg]_1 \\
&= \frac{2(\frac{R}{n}\sigma_j^2 + \frac{\tilde{R}}{n})}{\Omega_j^2-4\Delta} \left( -\Delta\Gamma_j\cdot \lambda_{1,j}^{t+1} + \frac{1}{2}(1-\Gamma_j - \kappa_{3,j})^2\cdot \lambda_{2,j}^{t+1} + \frac{1}{2}(1-\Gamma_j - \kappa_{2,j})^2\cdot \lambda_{3,j}^{t+1}\right)\\
& \qquad + \frac{2\EE_{w_0}^{(j)}}{\Omega_j^2-4\Delta} \left( -\Delta\Gamma_j\cdot \lambda_{1,j}^{t+1} + \frac{1}{2}(1-\Gamma_j - \kappa_{3,j})^2\cdot \lambda_{2,j}^{t+1} + \frac{1}{2}(1-\Gamma_j - \kappa_{2,j})^2\cdot \lambda_{3,j}^{t+1}\right)\\
& \qquad + \left(\frac{2\varphi_j^{(n)}}{\Omega_j^2-4\Delta} (-\lambda_{1,j}^{t+1} + \frac{1}{2}\cdot\lambda_{2,j}^{t+1} + \frac{1}{2}\cdot\lambda_{3,j}^{t+1}) \right) 2f(0)\\
& \qquad + \left(\frac{2}{\Omega_j^2-4\Delta} (-\lambda_{1,j}^{t+1} + \frac{1}{2}\cdot\lambda_{2,j}^{t+1} + \frac{1}{2}\cdot\lambda_{3,j}^{t+1}) \right) \EE^{(j)}_1(0)\\
&\qquad + \left( \frac{2}{\Omega_j^2 - 4\Delta} (\Omega_j\Delta\cdot\Delta^{t} -\kappa_{3,j}\lambda_{2,j}\cdot\lambda_{2,j}^{t} -\kappa_{2,j} \lambda_{3,j}\cdot\lambda_{3,j}^{t})\right)\EE_2^{(j)}(0)
.
\end{aligned}
\]
Simple algebra shows
$
1-\Gamma_j - \kappa_{3,j} = (\Omega_j - \Delta) - (\Omega_j - \kappa_{2,j}) = \Delta - \kappa_{2,j},
$ and similarly, $1-\Gamma_j - \kappa_{2,j} = \Delta - \kappa_{3,j}$. Hence, we conclude that
\[
f(t+1) = \frac{R}{2} h_1(t+1) + \frac{\tilde{R}}{2}h_0(t+1) + \sum_{k=0}^t  \gamma^2 \zeta(1-\zeta)H_2(t-k)f(k) + \EE(t).
\]
Here for $k=0,1$,
\[
\begin{aligned}
    h_k(t) = \frac{1}{n}\sum_{j=1}^n \frac{2(\sigma_j^2)^k}{\Omega_j^2-4\Delta} \left( -\Delta\Gamma_j\cdot \Delta^{t} + \frac{1}{2}(\Delta - \kappa_{2,j})^2\cdot \lambda_{2,j}^{t} + \frac{1}{2}(\Delta - \kappa_{3,j})^2\cdot \lambda_{3,j}^{t}\right),
\end{aligned}
\]
and
\[
H_2(t) = \frac{1}{n}\sum_{j=1}^n  \frac{2\sigma_j^4}{\Omega_j^2-4\Delta} \Big(-\lambda_{1,j}^{t+1} + \frac{1}{2}\lambda_{2,j}^{t+1} + \frac{1}{2}\lambda_{3,j}^{t+1}\Big).
\]
Also, the error term $\EE(t)$ is defined as
\begin{equation}\label{eq:errors}
\EE(t) \defas \EE_{IC}(t) + \EE_{beta}(t) + \EE_{KL}(t) + \EE_M(t),
\end{equation}
where
\[
\begin{aligned}
\EE_{IC}(t) &\defas \sum_{j\in[n]} \frac{1}{\Omega_j^2-4\Delta} \left( -\Delta\Gamma_j\cdot \Delta^{t+1} + \frac{1}{2}(\Delta - \kappa_{2,j})^2\cdot \lambda_{2,j}^{t+1} + \frac{1}{2}(\Delta - \kappa_{3,j})^2\cdot \lambda_{3,j}^{t+1}\right)\EE_{w_0}^{(j)},\\
\EE_{beta}(t) &\defas \sum_{k=0}^t \left(\sum_{j\in[n]} \frac{1}{\Omega_j^2-4\Delta} (-\Delta\cdot\Delta^{t-k} + \frac{\lambda_{2,j}}{2}\cdot\lambda_{2,j}^{t-k} + \frac{\lambda_{3,j}}{2}\cdot\lambda_{3,j}^{t-k})\EE_{beta}^{(j)}(k)
\right),\\
% & + \sum_{k=0}^t \left(\sum_{j\in[n]} \frac{1}{\Omega_j^2-4\Delta}(\Omega_j\Delta\cdot\lambda_{1,j}^{t-k} -\kappa_{3,j}\\lambda_{2,j}\cdot\lambda_{2,j}^{t-k} -\kappa_{2,j} \lambda_{3,j}\cdot\lambda_{3,j}^{t-k}) \EE_{beta, cross}^{(j)}(k) \right),\\
\EE_{KL}(t) &\defas\sum_{k=0}^t \left(\sum_{j\in[n]} \frac{1}{\Omega_j^2-4\Delta} (-\Delta\cdot\Delta^{t-k} + \frac{\lambda_{2,j}}{2}\cdot\lambda_{2,j}^{t-k} + \frac{\lambda_{3,j}}{2}\cdot\lambda_{3,j}^{t-k})\EE_{KL}^{(j)}(k)
\right),\ \text{and}\\
\EE_{M}(t) &\defas \sum_{k=0}^t \left(\sum_{j\in[n]} \frac{1}{\Omega_j^2-4\Delta} (-\Delta\cdot\Delta^{t-k} + \frac{\lambda_{2,j}}{2}\cdot\lambda_{2,j}^{t-k} + \frac{\lambda_{3,j}}{2}\cdot\lambda_{3,j}^{t-k})\EE_{B}^{(j)}(k)
\right)\\
&+  \sum_{k=0}^t \left(\sum_{j\in [n]} \frac{1}{\Omega_j^2-4\Delta}(\Omega_j\Delta\cdot\Delta^{t-k} -\kappa_{3,j}\lambda_{2,j}\cdot\lambda_{2,j}^{t-k} -\kappa_{2,j} \lambda_{3,j}\cdot\lambda_{3,j}^{t-k}) \EE_2^{(j)}(k) \right).
\end{aligned}
\]
A few comments on the naming of errors: $IC$ in $\EE_{IC}(t)$ stands for \textit{initial condition}. This error is generated from the initial bias on $w_{0,j}^2$. On the other hand, $M$ in $\EE_M(t)$ stands for \textit{Martingale}; the error is an accumulation of martingales over each time iteration. We deal with these errors in detail in following sections. And note that Theorem \ref{thm:concentraion_main_detailed} can be proved once we control the error $\EE(t)$ \textit{with overwhelming probability}. 

\section{Concentration of measure for the high--dimensional orthogonal group} \label{sec:martingale_error}
In this section, we give a high-level overview of the errors and how to bound them with overwhelming probability. Recall that we have the following error pieces:
\begin{equation}
\EE(t) \defas \EE_{IC}(t) + \EE_{beta}(t) + \EE_{KL}(t) + \EE_M(t).
\end{equation}

In order to bound the errors, we follow the methods that are used in \citep{paquette21a}: we would like to make an \textit{a priori} estimate that shows the function values remain bounded. Thus, we define the stopping time, for any fixed $\theta >0$ and \textit{large enough} $n\in\mathbb{N}$, by
\[
\vartheta \defas \inf \left\{ t\ge 0:\|\ww_t\|  (= \|\UU\SSigma\nnu_t - \eeta\|) > n^\theta \right\}.
\]
We then need to show:
\begin{lemma}\label{lemma: stoppingtime_norm}
    For any $\theta >0$, and for any $T>0$, $\vartheta >T$ with overwhelming probability.
\end{lemma}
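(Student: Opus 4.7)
The plan is to obtain a purely deterministic, worst-case recursion for $\|\ww_t\|_2$ from \eqref{eq:w_recurrence}, and then observe that the only random quantities entering that recursion (the initial data $\|\ww_0\|_2$, $\|\ww_1\|_2$, and the spectral edge $\sigma_{\max}^2$) are all $O(1)$ with overwhelming probability. Since $T$ is fixed in $n$, iterating the recursion $T$ times preserves a sub-polynomial bound, which easily lies below $n^\theta$ for any fixed $\theta>0$ once $n$ is sufficiently large.

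First, starting from \eqref{eq:w_recurrence}, I would use the elementary operator-norm bounds $\|\UU^T \PP_k \UU\|_2 \le 1$ (the product of an orthogonal matrix, a coordinate projection, and another orthogonal matrix) and $\|\SSigma \SSigma^T\|_2 = \sigma_{\max}^2$ to obtain the deterministic estimate
\begin{equation*}
\|\ww_{k+1}\|_2 \le (1 + \gamma \sigma_{\max}^2 + \Delta)\|\ww_k\|_2 + \Delta\|\ww_{k-1}\|_2.
\end{equation*}
Introducing $M_k \defas \max(\|\ww_k\|_2, \|\ww_{k-1}\|_2)$ converts this two-step recursion into the single-step bound $M_{k+1} \le (1+\gamma\sigma_{\max}^2+2\Delta)\,M_k$, so that $M_t \le (1+\gamma\sigma_{\max}^2+2\Delta)^t\,M_1$ for every $t\ge 1$. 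Note this step uses only the deterministic operator-norm inequalities, so no probability is consumed here.

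Next, I would argue each ingredient is $O(1)$ w.o.p. For $\sigma_{\max}^2$, the orthogonal invariance of Assumption~\ref{assumption: spectral_density} together with the row normalization $\E\|\aa_i\|_2^2 = 1$ (which forces $\E\tr(\AA\AA^T)=n$) yields, via standard spectral-edge concentration for orthogonally invariant ensembles, $\sigma_{\max}^2 = O(1)$ w.o.p.; in fact, any sub-polynomial upper bound would be sufficient. For the initial data, writing $\ww_0 = \SSigma\VV^T(\xx_0-\widetilde{\xx})-\UU^T\eeta$, Assumption~\ref{assumption: Vector} gives $\E\|\ww_0\|_2^2 = (R/n)\tr(\SSigma^T\SSigma) + \widetilde{R} = O(R+\widetilde{R})$, and the distributional hypotheses (sub-Gaussianity of $\xx_0-\widetilde{\xx}$ and $\eeta$) provide concentration around this mean w.o.p., yielding $\|\ww_0\|_2 = O(1)$. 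A single application of the recurrence with the w.o.p. bound on $\PP_0$ and $\sigma_{\max}^2$ then gives $\|\ww_1\|_2 = O(1)$ w.o.p.\ as well.

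Combining the two ingredients: on the intersection of these w.o.p. events (still w.o.p.\ by the union bound), $\|\ww_t\|_2 \le C^T\cdot O(1) = O(1)$ uniformly over $0\le t\le T$, for a constant $C$ depending only on $\gamma,\Delta$ and the absolute constants bounding $\sigma_{\max}^2$. Since $O(1) < n^\theta$ for any $\theta>0$ once $n$ is large enough, the supremum never reaches $n^\theta$, so $\vartheta>T$ w.o.p. The main obstacle is not the recursion itself but upgrading each ``bounded-in-expectation'' estimate of the middle step to \emph{overwhelming probability} in the precise sense defined before the lemma (faster than any polynomial tail). For the extreme singular value this relies on the concentration of the edge of orthogonally invariant ensembles; for the initial conditions this uses sub-Gaussian (or at least sufficiently strong polynomial moment) tails for the entries of $\xx_0-\widetilde{\xx}$ and $\eeta$. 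Since only a sub-polynomial upper bound is ultimately needed, these tail estimates can be rather crude.
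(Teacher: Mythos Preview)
The proposal is correct and follows essentially the same route as the paper: a deterministic worst-case operator-norm bound on the two-step recursion \eqref{eq:w_recurrence} (the paper uses direct induction with $\|\ww_l\|\le C^l n^\epsilon$, you use the equivalent $M_k=\max(\|\ww_k\|,\|\ww_{k-1}\|)$ trick), followed by w.o.p.\ control of the initial data $\|\ww_0\|,\|\ww_1\|$. Two minor points of difference: the paper tacitly conditions on $\SSigma$, so $\sigma_{\max}^2$ is treated as deterministic there rather than controlled via spectral-edge concentration; and you are more explicit than the paper about the tail assumptions on $\xx_0-\widetilde{\xx}$ and $\eeta$ required to upgrade the second-moment bound of Assumption~\ref{assumption: Vector} to an overwhelming-probability statement.
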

%In the SGD in the Large paper, we prove this by showing that $\psi(t) = \frac{1}{2}\|w_t\|^2$ is a supermartingale (with some constant) and apply Optional Stopping theorem. Here, iterative equation on $w_{t+1,j}$ entail not only $w_{t,j}$ but also $w_{t-1,j}$, so it doesn't seem that obvious. 

\begin{proof}
From \eqref{eq:w_recurrence}, we have
\[
\ww_{k+1} = \left( (1+\Delta)\II_n - \gamma\SSigma\SSigma^T\UU^T\PP_k\UU \right)\ww_k - \Delta \ww_{k-1},
\]
where $\II_n$ denotes an identity matrix of dimension $n\times n$. Therefore, by taking norm on both sides and applying triangle inequality, we have
\[
\|\ww_{k+1}\| \le \left( 1+\Delta + \gamma\|\SSigma\|_2^2 \right) \|\ww_k\| + \Delta \|\ww_{k-1}\|.
\]
Let $ C:= 1+ 2\Delta + \gamma\|\SSigma\|_2^2$ and $\epsilon>0$ is small enough so that $ C^T \cdot n^\epsilon \le n^\theta$. By induction hypothesis, if we are given $\|\ww_l\| \le C^{l} n^\epsilon$ for $l= 0,\cdots,k < T$, we have
\[
\|\ww_{k+1}\| \le (1+ 2\Delta + \gamma\sigma_{\max}^2) C^{k} n^\epsilon \le C^{k+1} n^{\epsilon},
\]
and this finishes the proof once we check the initial conditions, i.e., $\|\ww_0\|, \|\ww_1\|$ are small enough with overwhelming probability. Observe, for any $\epsilon > 0$ and sufficiently large $n$,
\[
\begin{aligned}
\|\ww_0\|^2 = \sum_{j\in[n]} \Big(\sigma_j\nu_{0,j} - (\UU^T\eeta)_j\Big)^2 \le 2(\sigma_{\max}^2\|\nnu_0\|_2^2 + \|\eeta\|_2^2) = \mathcal{O}(1) \le n^\epsilon,
\end{aligned}
\]
w.o.p. by assumption \ref{assumption: Vector}. Similarly, $\ww_1$ is generated by the following formula
\[
\ww_1 = \left( \II_n - \gamma\SSigma\SSigma^T\UU^T\PP_k\UU \right)\ww_0,
\]
and applying norm on both sides gives
\[
\|\ww_1\| \le (1+ \gamma\sigma_{\max}^2) \|\ww_0\| \le (1+ \gamma\sigma_{\max}^2)n^\epsilon \le C n^\epsilon.
\] 
\end{proof}

We will need the result in what follows. Also, as an input, we work with the stopped process defined for any $t\ge 0$ by $\ww_t^\vartheta \defas \ww_{t\wedge\vartheta}$. Moreover, we condition on $\SSigma$ going forward. 

\label{apx:error_bounds}
\subsection{Control of the errors from the Initial Conditions}
In this section, we focus on controlling the errors generated by the initial conditions:
\[\EE_{IC}(t) = \sum_{j=1}^n \frac{1}{\Omega_j^2-4\Delta} \left( -\Delta\Gamma_j\cdot \lambda_{1,j}^{t+1} + \frac{1}{2}(\Delta - \kappa_{2,j})^2\cdot \lambda_{2,j}^{t+1} + \frac{1}{2}(\Delta - \kappa_{3,j})^2\cdot \lambda_{3,j}^{t+1}\right)\EE_{w_0}^{(j)},
\]
where
\[
  \EE_{w_0}^{(j)} =w_{0,j}^2 - \mathbb{E}[w_{0,j}^2] = w_{0,j}^2 - \Big(\frac{R}{n}\sigma_j^2 + \frac{\tilde{R}}{n} \Big).
\]
The next Proposition shows that the error $\EE_{IC}(t)$ can be bounded w.o.p.
\begin{prop}\label{prop:error_initial}
For any $T>0$ and for any $\epsilon >0$, with overwhelming probability,
\[
\max_{0\le t\le T}|\EE_{IC}(t)| \le n^{\epsilon-1/2}.
\]
\end{prop}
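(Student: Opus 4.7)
The plan is to view $\EE_{IC}(t) = \sum_{j=1}^n c_j(t)\, \EE_{w_0}^{(j)}$ as a linear combination of centered random variables whose randomness comes from the Haar-distributed orthogonal matrix $\UU$ (and from the initialization/noise), then apply concentration of measure on $O(n)$ to obtain a $\mathcal{O}(n^{\epsilon-1/2})$ bound uniformly in $t \le T$.

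First, I would establish that the coefficients
\[
c_j(t) \defas \frac{1}{\Omega_j^2-4\Delta}\Bigl(-\Delta\Gamma_j\,\Delta^{t+1} + \tfrac12(\Delta-\kappa_{2,j})^2\,\lambda_{2,j}^{t+1} + \tfrac12(\Delta-\kappa_{3,j})^2\,\lambda_{3,j}^{t+1}\Bigr)
\]
are uniformly bounded in $j\in[n]$ and $t\in[0,T]$ by a constant depending only on the hyperparameters. The learning rate condition $\gamma<(1+\Delta)/(\zeta\sigma_{\max}^2)$ controls $|\lambda_{i,j}|\le 1$, so the time factors are harmless. The only subtlety is the apparent singularity at $\Omega_j^2 = 4\Delta$, which is removable: at that point $\lambda_{2,j}=\lambda_{3,j}=\Delta$ and $\kappa_{2,j}=\kappa_{3,j}=\sqrt{\Delta}$, so the numerator factors $(\Delta-\kappa_{i,j})^2$ vanish to the same order as the denominator and the ratio extends continuously.

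Next, I would view, conditional on $\SSigma$, $\VV$, $\xx_0-\widetilde\xx$, and $\eeta$, the map
\[
g(\UU) \defas \sum_{j=1}^n c_j(t)\bigl(\sigma_j\nu_{0,j} - (\UU^T\eeta)_j\bigr)^2
\]
as a Lipschitz function on the Haar-distributed orthogonal group $O(n)$. A direct gradient calculation gives $\|\nabla_\UU g\|_F^2 = 4\|\eeta\|^2 \sum_l c_l(t)^2\, w_{0,l}^2$, which is $\mathcal{O}(1)$ using Assumption~\ref{assumption: Vector} (so $\|\eeta\|=\mathcal{O}(1)$ w.o.p.), the boundedness of the $c_l(t)$, and the estimate $\|\ww_0\|=\mathcal{O}(1)$ from the proof of Lemma~\ref{lemma: stoppingtime_norm}. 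Gromov--Milman/log-Sobolev concentration on $O(n)$ then yields sub-Gaussian deviations of $g(\UU)$ from its conditional mean at scale $1/\sqrt{n}$, hence at most $n^{\epsilon-1/2}$ with overwhelming probability. Comparing the conditional mean to $\sum_j c_j(t)\E[w_{0,j}^2] = \sum_j c_j(t)(R\sigma_j^2+\widetilde R)/n$ produces a further $\mathcal{O}(n^{\epsilon-1/2})$ error: the Haar identity $\E[(\UU^T\eeta)_j^2\mid\eeta] = \|\eeta\|^2/n$ together with concentration of $\|\eeta\|^2$ around $\widetilde R$ handles the noise part, and a parallel argument for $\sum_j c_j(t)\sigma_j^2\nu_{0,j}^2$ handles the signal part.

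The uniform-in-$t$ statement follows from a union bound over the finite set $t\in\{0,1,\ldots,T\}$, which is harmless since $T$ is fixed and each failure event has super-polynomially small probability. The main obstacle is the second step: establishing the Lipschitz constant of $g$ uniformly on the relevant orbit without losing polynomial factors in $n$. Concretely one works on the event $\{\vartheta>T\}$ from Lemma~\ref{lemma: stoppingtime_norm} and must balance the stopping-time exponent $\theta$ against the target exponent $\epsilon$ so that the Lipschitz constant contributes only a subleading power of $n$. A secondary subtlety is the coordinate-level concentration of $\nu_{0,j}^2$ entering the signal part, which is clean under an effective Haar assumption on $\VV$ (available in the Gaussian model and implicit in the stated form of $\E[w_{0,j}^2]$) and otherwise requires a direct rotational-invariance argument for the initialization.
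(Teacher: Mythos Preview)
Your approach is correct and genuinely different from the paper's. The paper does not use Lipschitz concentration on $O(n)$ at all for this error term; instead it bounds the \emph{variance} of $W(t)=\sum_j C^{(j)}(t)w_{0,j}^2$ by $\mathcal{O}(1/n)$ (using that $\nnu_0^2$ is Dirichlet-distributed, hence $\E[w_{0,j}^4]=\mathcal{O}(n^{-2})$, together with Cauchy--Schwarz) and then applies Chebyshev's inequality followed by a union bound over $t\in[T]$. Your route via Gromov--Milman/log-Sobolev concentration on $O(n)$ is more work but actually buys more: it delivers sub-Gaussian tails and hence the ``overwhelming probability'' (super-polynomial decay) claimed in the statement, whereas the paper's Chebyshev argument as written only yields $\Pr[|\EE_{IC}(t)|>n^{\epsilon-1/2}]=\mathcal{O}(n^{-2\epsilon})$, which is polynomial in $n$ for fixed $\epsilon$. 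Your treatment of the uniform boundedness of $c_j(t)$ (including the removable singularity at $\Omega_j^2=4\Delta$) is also more careful than the paper's, which simply takes this for granted. One point worth tightening: the Lipschitz constant $\|\nabla_\UU g\|_F$ depends on $\UU$ through $w_{0,l}$, but since $\|\ww_0\|^2\le 2\sigma_{\max}^2\|\nnu_0\|^2+2\|\eeta\|^2$ uniformly in $\UU$, you get a $\UU$-free Lipschitz bound after conditioning on $\nnu_0,\eeta$---state this explicitly rather than appealing to $\{\vartheta>T\}$, which is a stopping time for the SGD iterates, not for the initialization. Finally, your caveat about needing rotational invariance for the signal part $\sum_j c_j(t)\sigma_j^2\nu_{0,j}^2$ is apt; the paper handles this implicitly via the Dirichlet claim for $\nnu_0^2$, so both arguments share this assumption.
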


\begin{proof}
The proof is similar to that of \citep[Lemma 10]{paquette21a}. We rely on Chebyshev's inequality and the law of total probability to control the error. Fix $t\in[T]$ and let
\[
C^{(j)}(t) \defas \frac{1}{\Omega_j^2-4\Delta} \left( -\Delta\Gamma_j\cdot \lambda_{1,j}^{t+1} + \frac{1}{2}(\Delta - \kappa_{2,j})^2\cdot \lambda_{2,j}^{t+1} + \frac{1}{2}(\Delta - \kappa_{3,j})^2\cdot \lambda_{3,j}^{t+1}\right),
\]
and
\[
W(t) \defas \sum_{j=1}^n C^{(j)}(t) w_{0,j}^2,
\]
so that $\EE_{IC}(t) = W(t) - \mathbb{E}[W(t)]$. From \citep[Lemma 10]{paquette21a}, we know that the vector $\nnu_0^2$ follows the Dirichlet distribution (recall $\nnu_k = \VV^T (\xx_k-\widetilde{\xx})$), and in particular, $\mathbb{E}(\nu_{0,j}^4) \le \mathcal{O}(n^{-2})$ leads to $\mathbb{E}(w_{0,j}^4) \le \mathcal{O}(n^{-2})$ (also recall $\ww_k = \SSigma\nnu_k - \UU^T\eeta$, \eqref{eq:w}). Therefore, the (conditional) variance of $W(t)$ is bounded by
\[
\begin{aligned}
\text{Var}\Big[W(t)\Big] 
&= \mathbb{E}\left[  \left( \sum_{j=1}^n C^{(j)}(t) w_{0,j}^2 - \sum_{j=1}^n C^{(j)}(t) \Big(\frac{R}{n}\sigma_j^2 + \frac{\tilde{R}}{n} \Big) \right)^2 \right]\\
&= \mathbb{E}\left[ \left( \frac{1}{n}\sum_{j=1}^n C^{(j)}(t) \Big( n w_{0,j}^2 - \big(R\sigma_j^2 + \tilde{R} \big) \Big)\right)^2 \right]\\
&\le \frac{1}{n^2} \mathbb{E}\left[ \sum_{j=1}^n  \big(C^{(j)}(t)\big)^2 \Big( n w_{0,j}^2 - \big(R\sigma_j^2 + \tilde{R} \big) \Big)^2  \right]\\
&= \frac{1}{n} \left[ \frac{1}{n} \sum_{j=1}^n \big(C^{(j)}(t)\big)^2 \Big( n^2 \mathbb{E}[w_{0,j}^4] - \big(R\sigma_j^2 + \tilde{R} \big)^2 \Big) \right] =\mathcal{O}\Big(\frac{1}{n}\Big),
\end{aligned}
\]
where the Cauchy-Schwarz inequality was used in the second last line. Therefore, for $\epsilon >0$, Chebyshev inequality gives
\[
\text{Pr}\left[ \Bigg| \sum_{j=1}^n C^{(j)}(t) w_{0,j}^2 - \sum_{j=1}^n C^{(j)}(t) \Big(\frac{R}{n}\sigma_j^2 + \frac{\tilde{R}}{n} \Big) \Bigg| \ge n^{\epsilon-1/2} \right] \le \frac{1}{n^{2\epsilon-1}} \text{Var}\Big[W(t)\Big] \xrightarrow[n\to\infty]{} 0.
\]
Now applying the law of total probability (over $t=1,\cdots,T$) to this gives the claim.
\end{proof}

\subsection{Control of the beta errors}
In this section, we control the errors generated by the difference of $\frac{\beta(\beta-1)}{n(n-1)}$ and $\zeta^2 = (\frac{\beta}{n})^2$. For $t\in[T\wedge\vartheta]$, recall
\[
\EE_{beta}(t) = \sum_{k=0}^t \left(\sum_{j=1}^{n} \frac{1}{\Omega_j^2-4\Delta} (-\Delta\cdot\lambda_{1,j}^{t-k} + \frac{\lambda_{2,j}}{2}\cdot\lambda_{2,j}^{t-k} + \frac{\lambda_{3,j}}{2}\cdot\lambda_{3,j}^{t-k})\EE_{beta}^{(j)}(k)
\right),
\]
with
\[
\EE_{beta}^{(j)}(t) = \gamma^2\sigma_j^4 \left[ \left( \frac{\beta(\beta-1)}{n(n-1)} - \zeta^2 \right) w_{t,j}^2 + \left( - \frac{\beta(\beta-1)}{n(n-1)} + \zeta^2 \right) \sum_i U_{ij}^2\left(\sum_l U_{il}w_{t,l}\right)^2\right].
\]
First of all, note that
\[
\delta \defas \frac{\beta(\beta-1)}{n(n-1)} - \zeta^2 = \frac{\beta}{n}\cdot \frac{(\beta-1)n - \beta(n-1)}{n(n-1)} = \frac{\zeta(\zeta-1)}{n-1} = \mathcal{O}(n^{-1}).
\]
Then we can show the following:
\begin{prop}\label{prop:betaerror}
For any $T>0$ and for any $\epsilon >0$, with overwhelming probability,
\[
\max_{0\le t\le T\wedge\vartheta}|\EE_{beta}(t)| \le n^{\alpha-1/2},
\]
for some $1/4 > \alpha >\epsilon$.
\end{prop}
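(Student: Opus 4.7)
The approach exploits the $\mathcal{O}(n^{-1})$ smallness of $\delta=\zeta(\zeta-1)/(n-1)$, the \emph{a priori} norm bound from Lemma \ref{lemma: stoppingtime_norm}, and Lévy-type sphere concentration for the Haar matrix $\UU$. First I would pull $\delta$ out of each $\EE_{beta}^{(j)}(k)$ and absorb the $\gamma^{2}\sigma_j^{4}$ weight into a time-dependent coefficient $\tilde c_j(s)$, rewriting
\[
\EE_{beta}(t) \;=\; \delta\sum_{k=0}^{t}\Bigl[\sum_{j}\tilde c_j(t-k)\,w_{k,j}^{2}\;-\;\sum_{i}\Bigl(\sum_{l}U_{il}w_{k,l}\Bigr)^{\!2}\sum_{j}\tilde c_j(t-k)\,U_{ij}^{2}\Bigr].
\]
Under the learning-rate assumption, each $|\lambda_{i,j}|\le 1$ (equal to $\sqrt{\Delta}$ in the complex-conjugate regime) and the apparent singularity at $\Omega_j^{2}=4\Delta$ is removable because the numerator vanishes to matching order, so $|\tilde c_j(s)|\le C$ uniformly. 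On the event $\{t\le \vartheta\}$, Lemma \ref{lemma: stoppingtime_norm} gives $\|\ww_k\|^{2}\le n^{2\theta}$ for every $k\le t$ with overwhelming probability, so the first inner sum is bounded by $Cn^{2\theta}$, while orthogonality of $\UU$ gives $\sum_i(\sum_l U_{il}w_{k,l})^{2}=\|\UU\ww_k\|^{2}=\|\ww_k\|^{2}\le n^{2\theta}$. It therefore suffices to bound $\max_{i,s}|\sum_{j}\tilde c_j(s)U_{ij}^{2}|$ uniformly.

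For that step, Assumption \ref{assumption: spectral_density} combined with conditioning on $\SSigma$ makes each column $(U_{ij})_{j\in[n]}$ uniform on $S^{n-1}$. The map $u\mapsto\sum_j\tilde c_j(s)u_j^{2}$ is $\mathcal{O}(1)$-Lipschitz on the sphere, so Lévy's inequality yields, for any $\epsilon'>0$,
\[
\Pr\Bigl[\bigl|\textstyle\sum_{j}\tilde c_j(s)U_{ij}^{2}-\tfrac{1}{n}\sum_{j}\tilde c_j(s)\bigr|>n^{\epsilon'-1/2}\Bigr]\le 2\exp(-c\,n^{2\epsilon'}),
\]
with the mean being $\mathcal{O}(1)$. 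A union bound over the $n$ values of $i$ and the $T+1$ values of $s$ preserves overwhelming probability, giving $\max_{i,s}|\sum_{j}\tilde c_j(s)U_{ij}^{2}|\le C$ w.o.p. Assembling everything, $|\EE_{beta}(t)|\le |\delta|\cdot(T+1)\cdot Cn^{2\theta}\le Cn^{2\theta-1}$ on $\{t\le T\wedge\vartheta\}$; picking $\theta>0$ small enough that $2\theta-1\le\alpha-1/2$ (always possible since $\alpha>\epsilon>0$) delivers the claimed bound, after a final union bound over $t\in\{0,\dots,T\}$.

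The main obstacle is the uniform-in-$i$ sphere-concentration step: because the estimate must hold for every row index simultaneously, variance-only (Chebyshev) arguments of the kind used for $\EE_{IC}$ in Proposition \ref{prop:error_initial} are not sharp enough, and one must appeal to the exponentially strong Lévy concentration on the high-dimensional sphere. The remainder of the argument is essentially bookkeeping around the $\mathcal{O}(n^{-1})$ prefactor $\delta$ and the stopping-time norm control.
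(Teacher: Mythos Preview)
Your argument is correct but takes a different route from the paper for the second inner sum. The paper does not invoke L\'evy concentration on the rows of $\UU$; instead it cites Lemma~\ref{lemma:uwt}, the delocalization estimate $\max_i|(\UU\ww_k^\vartheta)_i|=\mathcal{O}(n^{\alpha-1/2})$. With that lemma in hand, the paper bounds
\[
\sum_{j}C^{(j)}(t,k)\sum_i U_{ij}^2(\UU\ww_k)_i^2
\;\le\; \Big(\sum_j |C^{(j)}(t,k)|\Big)\,\max_i(\UU\ww_k)_i^2
\;\le\; Cn\cdot n^{2\alpha-1},
\]
using only $\sum_i U_{ij}^2=1$, and then multiplies by $|\delta|=\mathcal{O}(n^{-1})$ to land at $\mathcal{O}(n^{2\alpha-1})\le n^{\alpha-1/2}$. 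Your version instead swaps the order of summation the other way, shows $\sup_{i,s}\bigl|\sum_j\tilde c_j(s)U_{ij}^2\bigr|\le C$ w.o.p.\ by sphere concentration, and then uses $\sum_i(\UU\ww_k)_i^2=\|\ww_k\|^2\le n^{2\theta}$, arriving at $\mathcal{O}(n^{2\theta-1})$.

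Both are valid. Your argument is self-contained and actually yields a slightly sharper exponent (any $\theta>0$ rather than the $\alpha$ inherited from Lemma~\ref{lemma:uwt}), at the cost of its own concentration step and union bound. The paper's version is shorter because Lemma~\ref{lemma:uwt} is needed anyway for the martingale errors $\EE_{B,1}$ and $\EE_{B^2}$, so the paper simply reuses it here. If you are writing this proof in isolation, your approach is cleaner; if you are writing the full appendix, the paper's approach avoids duplicated effort.
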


\begin{proof}
Let
\[
C^{(j)}(t, k) \defas  \frac{\gamma^2\sigma_j^4}{\Omega_j^2-4\Delta} (-\Delta\cdot\lambda_{1,j}^{t-k} + \frac{\lambda_{2,j}}{2}\cdot\lambda_{2,j}^{t-k} + \frac{\lambda_{3,j}}{2}\cdot\lambda_{3,j}^{t-k}).
\]
Then $C^{(j)}(t, k), j\in[n]$ are uniformly bounded by our assumptions, and we have
\[
\EE_{beta}(t) = \sum_{k=0}^t \sum_{j=1}^{n} C^{(j)}(t,k) \left[ \delta w_{t,j}^2 -\delta \sum_i U_{ij}^2\left(\sum_l U_{il}w_{t,l}\right)^2\right].
\]
Now Lemma \ref{lemma: stoppingtime_norm} (boundedness on the norm of $\ww_t$) and Lemma \ref{lemma:uwt} (uniform boundedness on the coordinates of $\UU\ww_t$) gives
\[
\EE_{beta}(t\wedge \vartheta) \le C\delta \big(\|\ww_t^\vartheta\|^2 + n\cdot \max_i (\UU\ww_t^\vartheta)_i^2 \big) = \mathcal{O}(n^{2\alpha -1}),
\]
for some $C>0$, which shows our claim.
\end{proof}

\subsection{Control of the Key lemma errors}

In this section, we show that $\EE_{KL}(t)$ can be bounded with overwhelming probability. The following Key Lemma from \citep[Lemma 14]{paquette21a} will be useful in the following:

\begin{lemma}[Key Lemma] \label{lemma:errorKH}
For any $T > 0$ and for any $\epsilon > 0,$ for some $\{C^{(j)}(t)\}, j\in[n], 0\le t\le T$ that are uniformly bounded, with overwhelming probability
\[
\max_{1 \leq i \leq n}
\max_{0 \leq t \leq T}
\biggl|
\sum_{j=1}^n 
    C^{(j)}(t)
    \biggl(\bigl(\mathbb{e}_j^T
    \UU^T 
    \mathbb{e}_{i}
    \bigr)^2-\frac{1}{n}\biggr)
\biggr|
\leq n^{\epsilon-1/2}.
\]
\end{lemma}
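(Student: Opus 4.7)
The plan is to exploit Assumption~\ref{assumption: spectral_density}, which forces the matrix of left singular vectors $\UU$ from the SVD $\AA = \UU \SSigma \VV^T$ to be Haar distributed on the orthogonal group $O(n)$ (conditional on $\SSigma$). Consequently, for any fixed $i \in [n]$, the row $\UU^T \mathbb{e}_i = (U_{i1},\ldots,U_{in})$ is uniformly distributed on the unit sphere $S^{n-1} \subset \mathbb{R}^n$, and the target quantity $\mathbb{e}_j^T \UU^T \mathbb{e}_i = U_{ij}$ is just the $j$-th coordinate of this uniform vector. The collection $\{(i,t) : 1 \le i \le n,\ 0 \le t \le T\}$ has cardinality $n(T+1)$, which is polynomial in $n$; so a union bound reduces the statement to proving that for each fixed $(i,t)$,
\[
\Big|\sum_{j=1}^n C^{(j)}(t)\big(V_j^2 - 1/n\big)\Big| \le n^{\epsilon-1/2}
\]
w.o.p., where $V = (V_1,\ldots,V_n)$ is uniform on $S^{n-1}$.

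Next, I would pass to the Gaussian representation $V_j = g_j / \|g\|$ with $g = (g_1,\ldots,g_n) \sim \mathcal{N}(0, \II_n)$ an i.i.d.\ standard normal vector. A direct algebraic rearrangement gives
\[
\sum_j C^{(j)}(t)\big(V_j^2 - 1/n\big) = \frac{1}{\|g\|^2}\, g^\top \big(D_t - \bar{C}_t \II_n\big)\, g, \quad \text{with } D_t = \diag(C^{(j)}(t)),\ \bar{C}_t = \tfrac{1}{n}\sum_j C^{(j)}(t).
\]
The centering matrix $D_t - \bar{C}_t \II_n$ is traceless, so the quadratic form $g^\top(D_t - \bar{C}_t \II_n)g$ has mean zero. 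This reformulation is crucial: it turns a sphere-uniform statement into a standard Gaussian quadratic form that is amenable to sharp concentration.

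I would then apply the Hanson–Wright inequality. Since the $C^{(j)}(t)$ are uniformly bounded (say by $M$), we have $\|D_t - \bar{C}_t \II_n\|_{op} \le 2M$ and $\|D_t - \bar{C}_t \II_n\|_F^2 \le 4 M^2 n$. Hanson–Wright then yields, for some absolute $c>0$,
\[
\Pr\Big(\big|g^\top(D_t - \bar{C}_t \II_n)g\big| > n^{1/2 + \epsilon/2}\Big) \le 2 \exp(-c\, n^{\epsilon/2}),
\]
which is overwhelming. Combining this with standard chi-square concentration ($\|g\|^2 \ge n/2$ w.o.p.) produces the single-pair bound $\le 4\, n^{-1/2 + \epsilon/2}$ w.o.p., and the union bound over the $n(T+1)$ pairs is absorbed into the w.o.p.\ notion, giving the claim.

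The only genuine obstacle is bookkeeping the tolerances cleanly: the Hanson–Wright step must be executed at level $\epsilon/2$ rather than $\epsilon$ so that, after the polynomial-sized union bound and after dividing by the factor $\|g\|^2 \asymp n$, the final bound is still the stated $n^{\epsilon - 1/2}$. Apart from this bookkeeping the proof is a direct invocation of two standard Gaussian concentration tools (chi-square tail + Hanson–Wright) enabled by the Haar structure that Assumption~\ref{assumption: spectral_density} places on $\UU$.
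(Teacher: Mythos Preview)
Your argument is correct. The Haar structure from Assumption~\ref{assumption: spectral_density} makes each row of $\UU$ uniform on $S^{n-1}$ (conditionally on $\SSigma$), your Gaussian representation $V_j=g_j/\|g\|$ turns the centered sum into the traceless quadratic form $\|g\|^{-2}\,g^\top(D_t-\bar C_t\II)g$, and Hanson--Wright together with chi-square concentration for $\|g\|^2$ gives the $n^{\epsilon-1/2}$ bound for a single $(i,t)$; the union bound over the polynomially many pairs then finishes. One small cosmetic point: with $t=n^{1/2+\epsilon/2}$ the Hanson--Wright exponent is actually of order $n^{\epsilon}$, not $n^{\epsilon/2}$, so your tail is even better than stated; and you should remark that the $C^{(j)}(t)$ used in the paper are $\SSigma$-measurable, so the argument is to be read conditionally on $\SSigma$, which is harmless since $\UU$ is Haar independently of $\SSigma$.

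As for the comparison: the paper does not prove this lemma at all --- it is imported verbatim as \cite[Lemma~14]{paquette21a}. Your self-contained Gaussian-quadratic-form proof is a clean and standard route to this kind of bound and is entirely adequate here.
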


Given this lemma, combined with the Key Lemma, we can bound the error $\EE_{KL}(t)$ with overwhelming probability.
\begin{prop}\label{prop:error_KL}
For any $T>0$ and for any $\epsilon >0$, with overwhelming probability,
\[
\max_{0\le t\le T\wedge \vartheta}|\EE_{KL}(t)| \le n^{\epsilon-1/2}.
\]
\end{prop}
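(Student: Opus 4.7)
The plan is to rewrite $\EE_{KL}(t)$ in a form that matches the hypothesis of the Key Lemma (Lemma~\ref{lemma:errorKH}), then feed the conclusion back into the stopped-process bound of Lemma~\ref{lemma: stoppingtime_norm}. Concretely, introduce for each $0\le k\le t\le T$ the coefficient
\[
C^{(j)}(t,k) \defas \frac{\gamma^2\zeta(1-\zeta)\sigma_j^4}{\Omega_j^2-4\Delta}\Bigl(-\Delta\cdot\Delta^{t-k} + \tfrac{\lambda_{2,j}}{2}\lambda_{2,j}^{t-k} + \tfrac{\lambda_{3,j}}{2}\lambda_{3,j}^{t-k}\Bigr),
\]
so that after swapping the order of the sums over $i$ and $j$,
\[
\EE_{KL}(t) = \sum_{k=0}^t \sum_{i=1}^n \Bigl(\sum_{l}U_{il}w_{k,l}\Bigr)^2 \cdot \underbrace{\sum_{j=1}^n C^{(j)}(t,k)\Bigl(U_{ij}^2 - \tfrac{1}{n}\Bigr)}_{=:S_i(t,k)}.
\]

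Next I would verify that $\{C^{(j)}(t,k)\}_{j\in[n]}$ is uniformly bounded independent of $n$. Under the learning rate hypothesis $\gamma < (1+\Delta)/(\zeta\sigma_{\max}^2)$, one has $|\Omega_j|\le 1+\Delta$ and hence $|\lambda_{2,j}|, |\lambda_{3,j}|\le 1$ for every $j$; away from the set $\{\Omega_j^2 = 4\Delta\}$ this gives uniform bounds directly. On the critical set the roots collide ($\lambda_{2,j} = \lambda_{3,j} = \Delta$) so that both numerator and denominator of $C^{(j)}(t,k)$ vanish, and an expansion around the double root shows the singularity is removable, yielding a bound uniform in $j$, $n$, and (polynomial in) $t-k\le T$.

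With the $C^{(j)}(t,k)$ uniformly bounded, the Key Lemma applies for each fixed pair $(t,k)$: with overwhelming probability, $\max_{i}|S_i(t,k)|\le n^{\epsilon'-1/2}$ for any $\epsilon'>0$. A union bound over the $O(T^2)$ pairs $(t,k)$ and over $t\in[T]$ preserves overwhelming probability since $T$ is fixed. On the stopped event this gives
\[
\max_{0\le t\le T\wedge\vartheta}|\EE_{KL}(t)| \le (T+1)\cdot\max_{(t,k),i}|S_i(t,k)|\cdot \max_{0\le k\le T\wedge\vartheta}\|\UU\ww_k^\vartheta\|^2 \le (T+1)\, n^{\epsilon'-1/2}\cdot n^{2\theta},
\]
using $\|\UU\ww_k^\vartheta\|_2 = \|\ww_k^\vartheta\|_2 \le n^\theta$ from Lemma~\ref{lemma: stoppingtime_norm}. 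Since the stopping-time parameter $\theta$ in that lemma is arbitrary, choosing $\theta$ and $\epsilon'$ with $\epsilon'+2\theta < \epsilon$ yields the claimed bound $n^{\epsilon-1/2}$.

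The main obstacle is the uniform boundedness of $C^{(j)}(t,k)$ near the degenerate locus $\Omega_j^2 = 4\Delta$ where the eigendecomposition of $\MM_j$ breaks down; this has to be handled via the removable-singularity expansion and checked to be uniform in $n$. Everything else is bookkeeping: interchange of summations, a polynomial union bound, and a sharp-enough choice of $\theta$ relative to $\epsilon$ in the \emph{a priori} norm control.
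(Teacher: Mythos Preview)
Your proposal is correct and follows essentially the same route as the paper: rewrite $\EE_{KL}(t)$ so that the inner sum over $j$ matches the Key Lemma, bound $\sum_i(\UU\ww_k)_i^2=\|\ww_k\|^2$ via the stopped-process estimate, and absorb the $(T+1)$ and $n^{2\theta}$ factors by taking $\theta,\epsilon'$ small. The only difference is that you are explicit about the removable singularity at $\Omega_j^2=4\Delta$ in the coefficients $C^{(j)}(t,k)$, whereas the paper simply asserts uniform boundedness; your care there is warranted but does not change the argument.
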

\begin{proof}
By definition, we have
\[
\EE_{KL}(t) =\sum_{k=0}^t \left(\sum_{j\in[n]} \frac{1}{\Omega_j^2-4\Delta} (-\lambda_{1,j}\cdot\lambda_{1,j}^{t-k} + \frac{\lambda_{2,j}}{2}\cdot\lambda_{2,j}^{t-k} + \frac{\lambda_{3,j}}{2}\cdot\lambda_{3,j}^{t-k})\EE_{KL}^{(j)}(k)
\right),
\]
with
\[
\EE_{KL}^{(j)}(t) = \gamma^2\sigma_j^4(\zeta-\zeta^2) \sum_{i\in[n]} (U_{ij}^2 - \frac{1}{n} ) \left(\sum_{l\in[n]} U_{il}w_{t,l}\right)^2.
\]
Thus for a sufficiently small $\tilde{\epsilon} >0$ and some $C>0$, and by applying Lemma \ref{lemma:errorKH} and Lemma \ref{lemma: stoppingtime_norm},
\[
\begin{aligned}
|\EE_{KL}^{(n)}(t\wedge \vartheta)| \le C\sum_{k=0}^t \sum_{i=1}^n \left(e_i^T\UU\ww_t^\vartheta\right)^2\cdot n^{\tilde{\epsilon}-1/2} \le CTn^{\tilde{\epsilon}-1/2}\cdot \|\ww_t^\vartheta\|^2 \le n^{\epsilon-1/2}.
\end{aligned}
\]
\end{proof}

\subsection{Control of the Martingale error}
In this section, we bound the error caused by Martingale terms. Recall that
\[
\begin{aligned}
\EE_{M}(t) &= \sum_{k=0}^t \left(\sum_{j\in[n]} \frac{1}{\Omega_j^2-4\Delta} (-\Delta\cdot\Delta^{t-k} + \frac{\lambda_{2,j}}{2}\cdot\lambda_{2,j}^{t-k} + \frac{\lambda_{3,j}}{2}\cdot\lambda_{3,j}^{t-k})\EE_{B}^{(j)}(k)
\right)\\
&+  \sum_{k=0}^t \left(\sum_{j\in [n]} \frac{1}{\Omega_j^2-4\Delta}(\Omega_j\Delta\cdot\Delta^{t-k} -\kappa_{3,j}\lambda_{2,j}\cdot\lambda_{2,j}^{t-k} -\kappa_{2,j} \lambda_{3,j}\cdot\lambda_{3,j}^{t-k}) \EE_2^{(j)}(k) \right),
\end{aligned}
\]
where
\[
\EE_{B}^{(j)}(t) = \EE_{B^2}^{(j)}(t)+  \EE_{B,1}^{(j)}(t) + \EE_{B,2}^{(j)}(t),
\]
with
\[
\begin{aligned}
\EE_{B}^{(j)}(t) &= \EE_{B^2}^j(t)+  \EE_{B,1}^{(j)}(t) + \EE_{B,2}^{(j)}(t),\ \text{and}\ \EE_2^{(j)}(t) = -\frac{1}{2} \EE_{B,1}^{(j)}(t),\ \text{with}\\
\EE_{B,1}^{(j)}(t) &= -2\gamma\sigma_j^2w_{t,j} \sum_{l\in [n]}\EE_B^{(l,j)} w_{t,l},\ \EE_B^{(l,j)} = \sum_{i\in B}U_{il}U_{ij}-\zeta\delta_{l,j},\\
\EE_{B,2}^{(j)}(t) &= - 2\gamma \sigma_j^2 \Delta \sum_{l\in [n]} \EE_B^{(l,j)} w_{t,l} (w_{t,j}-w_{t-1,j}),\  \text{and}\\
\EE_{B^2}^{(j)}(t) &= \underbrace{\gamma^2\sigma_j^4\big(\sum_{l\in [n]} w_{t,l}(\sum_{i\in B} U_{ij}U_{il} )\big)^2}_{\defas\text{\textcircled{1}}} - \mathbb{E}[\text{\textcircled{1}}]. 
\end{aligned}
\]

In view of the expression of $\EE_M(t)$, we define
\[
\EE_{B,1}(t) \defas \sum_{k=0}^t \left(\sum_{j\in[n]} \frac{1}{\Omega_j^2-4\Delta} (-\Delta\cdot\lambda_{1,j}^{t-k} + \frac{\lambda_{2,j}}{2}\cdot\lambda_{2,j}^{t-k} + \frac{\lambda_{3,j}}{2}\cdot\lambda_{3,j}^{t-k})\EE_{B,1}^{(j)}(k)
\right),
\]
and
\[
\EE_{B^2}(t) \defas \sum_{k=0}^t \left(\sum_{j\in[n]} \frac{1}{\Omega_j^2-4\Delta} (-\Delta\cdot\lambda_{1,j}^{t-k} + \frac{\lambda_{2,j}}{2}\cdot\lambda_{2,j}^{t-k} + \frac{\lambda_{3,j}}{2}\cdot\lambda_{3,j}^{t-k})\EE_{B^2}^{(j)}(k)
\right).
\]
Then it is easy to see that controlling these two terms will lead to the control of the entire Martingale error. Control of $\EE_{B,2}(t)$, which can be defined similarly to $\EE_{B,1}(t)$, can be done with exactly the same as that of $\EE_{B,1}(t)$. As for the second term of $\EE_M(t)$ which includes $\EE_2^{(j)}(t)$, our analysis will show that the coefficients won't play an important rule in the control of the error; so that term can be controlled for the same reason as $\EE_{B,1}(t)$.

We organize the proof as follows. First, we introduce a proposition from \citep{bardenet15} that gives an overwhelming probability concentration for sampling with replacement. Also, we claim that $\{\UU\ww_t\}, t\in[T\wedge\vartheta]$ is uniformly distributed with overwhelming probability over different coordinates. This lemma will lead to bounding the ``first-order'' error $\EE_{B,1}(t)$ (similarly for $\EE_{B,2}(t)$). As for bounding the ``second-order'' error $\EE_{B^2}(t)$, we will use the Hanson-Wright inequality for sampling without replacement \citep{adamczak15}. 

% Also, as an input, we use Lemma \ref{lemma: stoppingtime_norm}, and so we work with the stopped process defined for any $t\ge 0$ by $\ww_t^\vartheta \defas \ww_{t\wedge\vartheta}$.

\subsubsection{Control of $\EE_{B,1}(t)$}
The Martinagle error originates from randomly sampling a mini-batch at every iteration. We begin by presenting the following Bernstein-type concentration result for sampling without replacement so that we see that randomness does not deviate too much from the ``expectation''.

\begin{prop}[Proposition 1.4, \citep{bardenet15}]\label{prop:concent}
Let $\mathcal{X} = (x_1, \cdots, x_n)$ be a finite population of $n$ points and $X_1, \cdots, X_\beta$ be a random sample drawn without replacement from $\mathcal{X}$. Let
\[
a = \min_{1\le i\le n} x_i\ \text{and}\ b=\max_{1\le i\le n} x_i.
\]
Also let
\[
\mu = \frac{1}{n}\sum_{i=1}^n x_i\ \text{and}\ \sigma^2 = \frac{1}{n}\sum_{i=1}^n (x_i-\mu)^2
\]
be the mean and variance of $\mathcal{X}$, respectively. Then for all $\epsilon >0$,
\[
\mathbb{P}\left(\frac{1}{\beta}\sum_{i=1}^\beta X_i - \mu \ge \epsilon \right) \le \exp \left(-\frac{\beta\epsilon^2}{2\sigma^2 + (2/3)(b-a)\epsilon} \right).
\]
\end{prop}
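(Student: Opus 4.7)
The cleanest route is to reduce sampling without replacement to sampling with replacement and then invoke the classical Bernstein inequality for i.i.d.\ bounded summands. The reduction rests on a convex-ordering lemma of Hoeffding: if $S_{\text{wo}} = \sum_{i=1}^\beta X_i$ denotes the sum under sampling without replacement from $\mathcal{X}$ and $S_{\text{w}} = \sum_{i=1}^\beta Y_i$ the sum under sampling with replacement from the same population, then $\mathbb{E}\,\phi(S_{\text{wo}}) \le \mathbb{E}\,\phi(S_{\text{w}})$ for every convex $\phi$. This is what lets us trade away the dependence in the without-replacement sample while keeping exponential-moment control.

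Concretely, I would argue as follows. By the Chernoff bound, for $\lambda > 0$,
\[
\mathbb{P}\!\left(\tfrac{1}{\beta}\sum_{i=1}^\beta X_i - \mu \ge \epsilon\right) \le e^{-\lambda \beta \epsilon}\, \mathbb{E}\bigl[e^{\lambda(S_{\text{wo}} - \beta\mu)}\bigr].
\]
Since $x \mapsto e^{\lambda x}$ is convex, Hoeffding's reduction gives
\[
\mathbb{E}\bigl[e^{\lambda(S_{\text{wo}} - \beta\mu)}\bigr] \le \mathbb{E}\bigl[e^{\lambda(S_{\text{w}} - \beta\mu)}\bigr] = \bigl(\mathbb{E}[e^{\lambda(Y_1 - \mu)}]\bigr)^\beta,
\]
using independence in the with-replacement sum. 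Since $Y_1 - \mu$ is centered, has variance $\sigma^2$, and satisfies $|Y_1 - \mu| \le b - a$, the standard Bernstein MGF estimate gives $\mathbb{E}[e^{\lambda(Y_1 - \mu)}] \le \exp\!\bigl(\tfrac{\lambda^2 \sigma^2 / 2}{1 - (b-a)\lambda/3}\bigr)$ for $0 < \lambda < 3/(b-a)$. Plugging this in and optimizing over $\lambda$ with the choice $\lambda = \epsilon / \bigl(\sigma^2 + (b-a)\epsilon/3\bigr)$ produces exactly the stated tail bound.

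The one genuine obstacle is Hoeffding's convex-ordering lemma itself, since everything after it is the textbook Bernstein derivation. The classical way to establish it is to realize $S_{\text{wo}}$ as a convex combination of deterministic permutations applied to an i.i.d.\ sample, and then pass the convex function through the average via Jensen. A cleaner alternative is to build the Doob martingale $Z_i = \mathbb{E}[S_{\text{wo}} \mid X_1, \ldots, X_i] - \mathbb{E}[S_{\text{wo}} \mid X_1, \ldots, X_{i-1}]$, bound each increment by $b-a$, and apply a Freedman-type martingale Bernstein inequality. This avoids the convex-ordering step but forces one to track how the conditional variance of the increments shrinks as the residual population decreases, which is the subtle bookkeeping.
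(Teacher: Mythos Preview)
Your argument is correct and is essentially the standard proof: Hoeffding's convex-ordering reduction from sampling without replacement to sampling with replacement, followed by the classical Bernstein moment-generating-function bound and optimization over $\lambda$. This is precisely the route taken in the cited reference \citep{bardenet15}.

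Note, however, that the paper itself does \emph{not} prove this proposition at all: it is quoted verbatim as Proposition~1.4 of \citep{bardenet15} and used as a black-box concentration tool in the error analysis (Lemma~\ref{lemma:uwt} and Proposition~\ref{prop:error_EB1}). So there is no ``paper's own proof'' to compare against---your write-up simply supplies the argument that the authors chose to outsource. The martingale/Freedman alternative you sketch at the end is a legitimate second route, but it is not needed here and is not what the cited source does either.
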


Now we can show that $\UU\ww_t$ is more or less uniformly distributed over coordinates.

\begin{lemma}\label{lemma:uwt}
$\max_{k}|(\UU\ww_t^\vartheta)_k| = \mathcal{O}(n^{\alpha-1/2})$ with overwhelming probability  for some $1/4 > \alpha > \epsilon$. %\textcolor{red}{(also do we need to use $t\wedge \vartheta$)}
\end{lemma}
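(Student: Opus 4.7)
The argument proceeds by induction on $t\in\{0,1,\ldots,T\wedge\vartheta\}$, after conditioning on the singular values $\SSigma$ and the mini-batch sequence $\{B_s\}_{s<t}$. By Assumption~\ref{assumption: spectral_density}, $\UU$ is then Haar-distributed on the orthogonal group of dimension $n$. Write $\yy_t \defas \UU\ww_t^\vartheta$ and $\KK \defas \AA\AA^T = \UU\SSigma\SSigma^T\UU^T$. Multiplying the recurrence~\eqref{eq:w_recurrence} on the left by $\UU$ yields
\begin{equation*}
\yy_{t+1}=(1+\Delta)\yy_t-\gamma\KK\PP_t\yy_t-\Delta\yy_{t-1},\qquad
y_{t+1,k}=(1+\Delta)y_{t,k}-\gamma\sum_{j\in B_t}K_{kj}y_{t,j}-\Delta y_{t-1,k},
\end{equation*}
so the task reduces to controlling $\max_k|y_{t,k}|$ inductively in $t$, using the \emph{a priori} norm bound $\|\ww_t^\vartheta\|\le n^\theta$ from Lemma~\ref{lemma: stoppingtime_norm}.

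\textbf{Base case.} Since $\xx_0-\widetilde{\xx}$ and $\eeta$ are independent of $\AA$, one has $\yy_0 = \AA(\xx_0-\widetilde{\xx}) - \eeta$. The $k$-th coordinate $\aa_k^T(\xx_0-\widetilde{\xx})$ is the inner product of a fixed vector of norm $O(1)$ (by Assumption~\ref{assumption: Vector} and $d/n=\Theta(1)$) with the $k$-th row of the Haar-distributed $\UU$ acting through $\SSigma\VV^T$; standard Haar concentration therefore gives $|\aa_k^T(\xx_0-\widetilde{\xx})| = O(n^{\epsilon-1/2})$ w.o.p., and a union bound over $k$ handles the full maximum. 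The noise coordinates satisfy $|\eta_k| = O(n^{\epsilon-1/2})$ w.o.p.\ from the i.i.d.\ structure in Assumption~\ref{assumption: Vector}. The case $t=1$ follows from $\yy_1$ via the same recurrence and the inductive-step argument below applied at degree $t=0$.

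\textbf{Inductive step.} Suppose $\max_k|y_{s,k}|\le n^{\alpha-1/2}$ w.o.p.\ for $s\le t$. The terms $(1+\Delta)y_{t,k}$ and $-\Delta y_{t-1,k}$ inherit the bound directly. Split $\sum_{j\in B_t}K_{kj}y_{t,j}$ into the diagonal piece $K_{kk}y_{t,k}\mathbf{1}_{k\in B_t}$ and the off-diagonal sum. Uniformly in $k$, $K_{kk}=\|\aa_k\|^2$ is close to $1$ w.o.p.\ (another application of Haar concentration), so the diagonal piece is $O(n^{\alpha-1/2})$. For $k\ne j$, $K_{kj}=\aa_k^T\aa_j$ concentrates at $0$ on scale $n^{-1/2}$ — essentially the content of the Key Lemma~\ref{lemma:errorKH} applied to pairs of rows of $\UU$. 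A purely absolute bound $|B_t|\cdot\max_j|K_{kj}|\cdot\max_j|y_{t,j}|=O(n^{\alpha})$ is too weak; the saving must come from cancellations. The cleanest way to formalize this is to view $\UU\mapsto\sum_{j\in B_t,j\ne k}K_{kj}y_{t,j}$ as a mean-zero function on the orthogonal group with Lipschitz constant $O(n^{-1/2}\cdot\sqrt{\beta}\cdot\max_j|y_{t,j}|)=O(n^{\alpha-1/2})$, and invoke the Gaussian-type concentration inequality for Haar measure, yielding an $O(n^{\alpha-1/2+\epsilon})$ bound w.o.p. Absorbing the $\epsilon$-loss per iteration into $\alpha$, and using that $T$ is a fixed constant independent of $n$, closes the induction for any choice $\epsilon<\alpha<1/4$; the upper bound $\alpha<1/4$ ensures that the product of $T$ concentration slacks stays below $n^{1/2}$, and it also matches the threshold needed by the companion Proposition~\ref{prop:betaerror}.

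\textbf{Main obstacle.} The principal difficulty is the statistical entanglement between $\KK$ and $\yy_t$, both of which are polynomial functions of the same Haar matrix $\UU$. The Lipschitz-concentration approach sidesteps this cleanly, but requires careful tracking of the Lipschitz constant of the whole map $\UU\mapsto y_{t,k}(\UU)$ through the two-step recurrence; its growth along $t$ must be controlled by the stopping-time bound $\|\ww_t^\vartheta\|\le n^\theta$ together with the finiteness of $T$. The interplay of the parameters $\theta$, $\alpha$, and $\epsilon$ then has to be calibrated simultaneously so that (i) the induction closes, (ii) the stopping time $\vartheta$ does not bind within $[0,T]$, and (iii) the resulting $\ell^\infty$ bound is compatible with the uses downstream in Propositions~\ref{prop:betaerror} and~\ref{prop:error_KL}.
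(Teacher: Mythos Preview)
Your inductive skeleton and base case are in the right spirit, but the inductive step has a genuine gap, and it is precisely the one you flag in your ``Main obstacle'' paragraph without resolving.

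You propose to concentrate $\sum_{j\in B_t,j\ne k}K_{kj}y_{t,j}$ using Haar randomness of $\UU$. The stated Lipschitz constant $O(n^{-1/2}\sqrt{\beta}\max_j|y_{t,j}|)$ is computed as if $\{y_{t,j}\}$ were constants, but $y_{t,j}=e_j^T\UU\ww_t(\UU)$ is itself a polynomial of degree $\sim 2t$ in the entries of $\UU$; the same is true of $K_{kj}$ through both rows. Once you differentiate the whole map $\UU\mapsto y_{t+1,k}(\UU)$, the Lipschitz constant is governed not by $\max_j|y_{t,j}|$ but by $\|\ww_t\|$ (and its $\UU$-derivative), which only gives $O(n^\theta)$. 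More seriously, the mean $\mathbb{E}_\UU[y_{t,k}]$ has to be identified as $O(n^{\alpha-1/2})$; you do not address this, and there is no simple sign symmetry on the orthogonal group that makes it vanish once the dynamics are coupled to coordinate projections $\PP_s$. The stopping-time truncation also breaks smoothness of the map. None of this is obviously fatal, but it is real work you have not done, and ``absorbing the $\epsilon$-loss'' is not a substitute.

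The paper avoids all of this by switching the source of randomness in the inductive step: it conditions on $\mathcal{F}_t$ (including $\UU$) and concentrates over the \emph{fresh} mini-batch $B_{t+1}$ using a Bernstein inequality for sampling without replacement (their Proposition~\ref{prop:concent}). Writing $X_{i,k,m}=(\UU\SSigma_m^2\UU^T)_{ik}(\UU\ww_t)_i$, one has $\sum_{i\in B_{t+1}}X_{i,k,m}$ with all ingredients $\mathcal{F}_t$-measurable except the batch; the required variance and range bounds reduce to (i) the inductive $\ell^\infty$ bound on $\UU\ww_t$ and (ii) $|(\UU\SSigma_m^2\UU^T)_{ik}|=O(n^{\epsilon-1/2})$ for $i\ne k$, which is a one-shot Haar estimate on $\UU$ alone (no dependence on the dynamics). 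This decouples the two sources of randomness and makes the induction close cleanly. The paper also strengthens the statement to control the partial sums $B_{k,m}^{(t)}=\sum_{j\le m}U_{kj}w_{t,j}$ uniformly in $m$, which is what lets Abel's inequality handle the weighted mean $\mu=\tfrac{1}{n}\sum_j\sigma_j^2U_{kj}w_{t,j}$; your formulation with $y_{t,k}=B_{k,n}^{(t)}$ alone would not give that.
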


\begin{proof}
We show a more general result, which is
\begin{equation}
\begin{gathered}\label{eq:maxbm}
MB^{(t)}\defas \max_{1\le k \le n}\max_{1\le m\le n}|B_{k,m}^{(t)}| = \mathcal{O}(n^{\alpha(t)-1/2})\ \text{w.o.p.},\\
\text{where}\ B_{k,m}^{(t)} \defas \sum_{j=1}^m U_{kj}w_{t,j}^\vartheta\ \text{and}\ 
1/4 > \alpha(T\wedge \vartheta) > \alpha((T\wedge\vartheta)-1)> \cdots > \alpha(0) > \epsilon.
\end{gathered}
\end{equation}
Note that $B_{k,n}^{(t)} = (\UU\ww_t^\vartheta)_k$, so $\max_{1\le k\le n}|(\UU\ww_t^\vartheta)_k| \le MB^{(t)}$. One approach is to apply the Proposition \ref{prop:concent} and the induction hypothesis. Note that the initial condition for the induction hypothesis will be treated later. From \eqref{eq:w_recurrence_coord}, we have
\[
w_{t+1,j}^\vartheta = w_{t,j}^\vartheta - \gamma\sigma_j^2\sum_{l\in[n]} w_{t,l}^\vartheta (\sum_{i  \in B_{t+1}} U_{ij}U_{il}) + \Delta(w_{t,j}^\vartheta-w_{t-1,j}^\vartheta).
\]
By multiplying $U_{kj}$ and summing over $j=1,\cdots,m,$ on both sides, we have
\begin{equation}\label{eq:B}
B_{k,m}^{(t+1)} = B_{k,m}^{(t)} - \gamma \underbrace{ \sum_{j=1}^m \sigma_j^2 U_{kj} \sum_{l\in[n]} w_{t,l}^\vartheta (\sum_{i  \in B_{t+1}} U_{ij}U_{il})}_{\defas\text{\textcircled{1}}} + \Delta(B_{k,m}^{(t)} - B_{k,m}^{(t-1)}).
\end{equation}
Let
\[
X_{i,k,m} \defas \sum_{j=1}^m \sigma_j^2 U_{kj}U_{ij} \sum_{l\in[n]} U_{il}w_{t,l}^\vartheta = (\UU\SSigma_m^2\UU^T)_{ik}(\UU\ww_t^\vartheta)_i,
\]
where $\SSigma_m = diag(\sigma_1^2,\cdots,\sigma_m^2,0,\cdots,0),$ so that \textcircled{1} $= \sum_{i\in B_{t+1}}X_{i,k,m}$. Note that we can assume that $k\notin B_{t+1}$ so that $k\neq i$, because we can deal with the term $X_{k,k,m} = (\UU\SSigma_m^2\UU^T)_{kk}(\UU\ww_t^\vartheta)_k$ separately. In order to use Proposition \ref{prop:concent}, we evaluate
\[
\begin{aligned}
\mu = \frac{1}{n}\sum_{i\in[n]} X_{i,k,m} 
= \frac{1}{n} \sum_{j=1}^m \sigma_j^2 U_{kj}\sum_{l\in[n]} w_{t,l}^\vartheta\delta_{j,l}
= \frac{1}{n} \sum_{j=1}^m \sigma_j^2 U_{kj}w_{t,j}^\vartheta,
\end{aligned}
\]
and
\[
\sigma^2 = \frac{1}{n}\sum_i (X_{i,k,m})^2 - \mu^2 = \frac{1}{n}\sum_i (\UU\SSigma_m^2\UU^T)_{ik}^2(\UU\ww_t^\vartheta)_i^2 - \mu^2.
\]
Now observe,
\begin{enumerate}
 \item As for $\mu$, by applying Abel's inequality,
\[
|\mu| \le \frac{1}{n}\sigma_{\max}^2\max_m|B_{k,m}^{(t)}| \le \frac{1}{n}\sigma_{\max}^2 MB^{(t)}.
\]
\item When it comes to controlling $\sigma^2$, by using Lemma \ref{lemma: stoppingtime_norm},
\[
\sigma^2 \le \frac{1}{n} \left( \max_{i\neq k} |(\UU\SSigma_m^2\UU^T)_{ik}|^2 \right) \|\UU\ww_t^\vartheta\|_2^2 + \mu^2 \le n^{-1+2\theta} \left( \max_{i\neq k} |(\UU\SSigma^2\UU^T)_{ik}|^2 \right).
\]
When $i\neq k$, by referring to \citep[Lemma 25]{paquette21a},
\[
|(\UU\SSigma^2 \UU^T)_{ik}| = | \sum_{j=1}^m \sigma_j^2 U_{ij}U_{kj}| = \mathcal{O}(n^{-1/2+\epsilon})\ \text{w.o.p.}
\]
Therefore, we have, with overwhelming probability, 
\[
\sigma^2 = \mathcal{O}(n^{-2+2\theta+2\epsilon}).
\]
\item Observe,
\[
\begin{aligned}
b &= \max_i X_{i,k,m} = \max_i (\UU\SSigma_m^2\UU^T)_{ik}(\UU\ww_t^\vartheta)_i \le \mathcal{O}(n^\epsilon) \max_i|(\UU\ww_t^\vartheta)_i| \le \mathcal{O}(n^\epsilon)\cdot MB^{(t)} \\
&= \mathcal{O}(n^{\epsilon + \alpha(t) -1/2})\ \text{w.o.p.},\ \text{and similar for}\ a = \min_{i} |X_{i,k,m}|.
\end{aligned}
\]
\end{enumerate}

Now applying Proposition \ref{prop:concent} gives 
\[
\mathbb{P}\left(\frac{1}{\beta}\sum_{i=1}^\beta X_{i,k,m} - \mu \ge t \right) \le \exp \left(-\frac{\beta t^2}{2\sigma^2 + (2/3)(b-a) t} \right),
\]
where the concentration with overwhelming probability is attained for $t = n^{-3/2+\alpha'(t)},\alpha'(t) > \alpha(t) > \theta + \epsilon$, and therefore
\[
\mathbb{P}\left(\sum_{i=1}^\beta X_{i,k,m} - \beta\mu \ge \tilde{\epsilon} \right) \to 0\ \text{as}\ n\to\infty\ \text{when}\ \tilde{\epsilon} = n^{-1/2+\alpha'(t)}.
\]
So applying this to \eqref{eq:B} gives
\[
B_{k,m}^{(t+1)} = B_{k,m}^{(t)} - \mathbbm{1}_{i=k}\cdot X_{k,k,m} - \Big( \beta\mu + \mathcal{O}(n^{-1/2+\alpha'(t)}) \Big) + \Delta(B_{k,m}^{(t)} - B_{k,m}^{(t-1)}),
\]
or
\[
\begin{aligned}
|B_{k,m}^{(t+1)}| &\le MB^{(t)} +  \sigma_{\max}^2MB^{(t)} + \Big( \frac{\beta}{n}\sigma_{\max}^2 MB^{(t)} + \mathcal{O}(n^{-1/2+\alpha'(t)}) \Big)\\
& \qquad + \Delta ( MB^{(t)} + MB^{(t-1)})\\
&\le C^{(k,m)}\mathcal{O}(n^{-1/2+\alpha'(t)}),
\end{aligned}
\]
for some $C^{(k,m)}>0$. Now taking maximum on $k$ and $m$ gives
\[
MB^{(t+1)} \le \Big(\max_{k,m} C^{(k,m)}\Big)\mathcal{O}(n^{-1/2+\alpha'(t)}) = \mathcal{O}(n^{-1/2+\alpha(t+1)})\ \text{w.o.p.},
\]
for some $\alpha(t+1) > \alpha'(t)$. Now once we show that the initial value $MB^{(0)}$ is small enough, by the induction hypothesis, we prove the theorem. Note that as $n\to \infty$, we can always make the increment $\alpha(t+1) - \alpha(t), t\in[T\wedge\vartheta-1]$ small enough so that $\alpha(T\wedge\vartheta) < 1/4$.

Now it suffices to check the initial condition, i.e., $MB^{(0)}$ is small enough:

\textbf{Claim.} $MB^{(0)} = \max_k\max_m|B_{k,m}^{(0)}| = \mathcal{O}(n^{\alpha(0)-1/2})\ \text{w.o.p.},\quad \alpha(0)>\theta + \epsilon$.

First note that $w_{0,j} = \sigma_j\nu_{0,j} - (\UU^T\eeta)_j, \nnu_t = \VV^T(\xx_t-\tilde{\xx})$. Therefore
\[
B_{k,m}^{(0)} = \sum_{j=1}^m U_{kj}(\sigma_j\nu_{0,j} - \sum_{l\in[n]} U_{lj}\eta_l) = \underbrace{\sum_{j=1}^m \sigma_jU_{kj}\nu_{0,j}}_{\defas\text{\textcircled{1}}} - \underbrace{\sum_{j=1}^m U_{kj} (\sum_{l\in[n]} U_{lj}\eta_l)}_{\defas\text{\textcircled{2}}}.
\]
We first show that $B_{k,m} = B_{k,m}^{(0)}$ for a fixed $k$ and $m$ attains the desired error order. As for \textcircled{1}, we show that $f_m(\UU_k) \defas$ \textcircled{1} is a Lipschitz function on $S^{n-1}$: observe, for $\UU_k, \UU_k' \in S^{n-1}$,
\[
\begin{aligned}
f_m(\UU_k) - f_m(\UU_k') &= \sum_{j=1}^m \sigma_j(U_{kj} - U_{kj}')\nu_{0,j}\\
&\le \sqrt{\sum_{j=1}^m \sigma_j^2\nu_{0,j}^2} \sqrt{\sum_{j=1}^m(U_{kj}-U_{kj}')^2} \le C\|\UU_k-\UU_k'\|_2,
\end{aligned}
\]
for some $C>0$. Therefore, the concentration result for Lipschitz function (\citep[Ex 5.1.12]{vershynin18}) gives
\[
\text{Pr} \{ |f_m(\UU_k) - \mathbb{E}f_m(\UU_k)| \ge t \} \le 2\exp(-cnt^2),
\]
and the overwhelming probability concentration is attained for $t = n^{-1/2+\epsilon}$, $\epsilon>0$.

As for \textcircled{2}, observe that
\[
\text{\textcircled{2}} = \sum_{j=1}^n g_j(t) (\aa^T \UU)_j (\bb^T \UU)_j,
    %-\E [(a^T U)_j (b^T U)_j]\bigr)
\]
where $g_j(t) = 1$ for $1\le j \le m$ and 0 otherwise, $\aa = e_k$, and $\bb = \eeta$. Given $\eeta$ fixed, we have $\mathbb{E}_\eta[\text{\textcircled{2}}|\eeta] = \frac{m}{n}\eta_k$. Therefore, by \citep[Lemma 25]{paquette21a}, \textcircled{2} $= \frac{m}{n}\eta_k + \mathcal{O}(n^{\epsilon-1/2})$ w.o.p. As $\max_k |\eta_k| \le n^{\epsilon-1/2}$ w.o.p. ($f(x) = \max_i |x_i|,\ x\in S^{n-1}$ is a Lipschitz function on $S^{n-1}$ with Lipschitz constant 1), we conclude that \textcircled{2} $=\mathcal{O}(n^{\epsilon-1/2})$ w.o.p. Therefore $B_{k,m}^{(0)} = \mathcal{O}(n^{\alpha(0)-1/2})$ w.o.p. for arbitrarily small enough $\epsilon + \theta < \alpha(0) < 1/4$ and taking maximum over $k$ and $m$ shows our claim.
\end{proof}

Above lemma leads to the control of $\EE_{B,1}(t)$. Note that control of $\EE_{B,2}(t)$ can be done very similarly to $\EE_{B,1}(t)$.

\begin{prop}[Error bound for $\EE_{B,1}(t)$]\label{prop:error_EB1}
\[
\max_{0\le t\le T\wedge\vartheta} |\EE_{B,1}(t)| = \mathcal{O}(n^{1/2-\alpha'})\ \text{w.o.p.},
\]
where $1/2 > \alpha' > \alpha$, with $\alpha$ from Lemma \ref{lemma:uwt}. 
\end{prop}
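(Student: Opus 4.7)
The plan is to exploit the conditional martingale structure in $k$ and apply the Bernstein sampling-without-replacement bound (Proposition~\ref{prop:concent}) at each step. For fixed $t$, write $\EE_{B,1}(t)=\sum_{k=0}^{t\wedge\vartheta} M_{t,k}$ where $M_{t,k}=\sum_{j=1}^n c_{t,k}^{(j)}\EE_{B,1}^{(j)}(k)$ and the deterministic coefficients
\[
c_{t,k}^{(j)}\;=\;\frac{1}{\Omega_j^2-4\Delta}\Bigl(-\Delta\cdot\Delta^{t-k}+\tfrac{\lambda_{2,j}}{2}\lambda_{2,j}^{t-k}+\tfrac{\lambda_{3,j}}{2}\lambda_{3,j}^{t-k}\Bigr)
\]
are uniformly bounded in $j,k,t$ under the hypothesis $\gamma<(1+\Delta)/(\zeta\sigma_{\max}^2)$, which forces $|\lambda_{i,j}|\leq 1$. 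Each $M_{t,k}$ is a martingale difference in $k$ because $\mathbb{E}\bigl[\sum_{i\in B_k}U_{il}U_{ij}\mid\mathcal{F}_{k-1}\bigr]=\tfrac{\beta}{n}\delta_{l,j}=\zeta\delta_{l,j}$ by the orthogonality of $\UU$.

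Expanding $\EE_{B,1}^{(j)}(k)$ and swapping the order of summation over $j,l$ and $i\in B_k$ recasts $M_{t,k}$ as a single centered batch statistic
\[
M_{t,k}\;=\;-2\gamma\Bigl(\sum_{i\in B_k}Y_i\;-\;\zeta\sum_{i=1}^n Y_i\Bigr),\qquad Y_i\;:=\;(\UU\DD\ww_k^\vartheta)_i\,(\UU\ww_k^\vartheta)_i,
\]
where $\DD$ is the diagonal matrix with entries $c_{t,k}^{(j)}\sigma_j^2$. This puts $M_{t,k}$ in exactly the form to which Proposition~\ref{prop:concent} applies, conditionally on $\mathcal{F}_{k-1}$. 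To feed Bernstein, I control the range and variance of $\{Y_i\}$ using the a priori estimates: Lemma~\ref{lemma: stoppingtime_norm} gives $\|\UU\DD\ww_k^\vartheta\|_2\leq Cn^{\theta}$ and Lemma~\ref{lemma:uwt} gives $\max_i|(\UU\ww_k^\vartheta)_i|\leq Cn^{\alpha-1/2}$ with overwhelming probability, so $\max_i|Y_i|=O(n^{\alpha+\theta-1/2})$, and $\sum_i Y_i^2\leq\max_i(\UU\ww_k^\vartheta)_i^2\cdot\|\UU\DD\ww_k^\vartheta\|_2^2=O(n^{2\alpha+2\theta-1})$, whence $\sigma_Y^2=O(n^{2\alpha+2\theta-2})$.

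Choosing the tolerance $\epsilon=n^{-3/2+\delta}$ with $\delta$ slightly larger than $\alpha+\theta$, both the variance ratio $\beta\epsilon^2/\sigma_Y^2$ and the range ratio $\beta\epsilon/(b-a)$ grow polynomially like $n^{\delta-\alpha-\theta}$, so Proposition~\ref{prop:concent} gives $|M_{t,k}|\leq 2\gamma\beta\epsilon=O(n^{-1/2+\delta})$ with overwhelming probability. Because the number of $(t,k)$ pairs is $O(T^2)$ (a constant as $n\to\infty$), a union bound preserves overwhelming probability, and summing $|M_{t,k}|$ over $k$ delivers the claimed bound $\max_{0\leq t\leq T\wedge\vartheta}|\EE_{B,1}(t)|=O(n^{\alpha'-1/2})$ for $\alpha'=\delta>\alpha$.

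\textbf{Main obstacle.} The crux is the range estimate $\max_i|Y_i|=O(n^{\alpha+\theta-1/2})$, which demands the uniform delocalization $\max_i|(\UU\ww_k^\vartheta)_i|=O(n^{\alpha-1/2})$ of Lemma~\ref{lemma:uwt}. That lemma is itself proved by an inductive application of Proposition~\ref{prop:concent}, so this proposition essentially reuses the same Bernstein machinery one level up with the extra diagonal weighting $\DD$. A subtle point is that Lemma~\ref{lemma:uwt} does not directly furnish a coordinate-wise bound on $\UU\DD\ww_k^\vartheta$ (since $\DD\ww_k^\vartheta$ no longer satisfies the specific two-step recurrence used in that induction); it is fortunate that the variance estimate only needs the $\ell_2$ norm of the weighted factor, available from Lemma~\ref{lemma: stoppingtime_norm} via $\|\UU\DD\ww_k^\vartheta\|_2=\|\DD\ww_k^\vartheta\|_2\leq\|\DD\|_{\mathrm{op}}\cdot n^{\theta}$, while the $\ell_\infty$ bound is only applied to the unweighted factor. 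Verifying the uniform boundedness of $c_{t,k}^{(j)}$ reduces to the learning-rate condition keeping $\Omega_j^2-4\Delta$ bounded away from zero and $|\lambda_{i,j}|\leq 1$, which is exactly the parameter regime guaranteed by the hypothesis.
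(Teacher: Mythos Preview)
Your approach is correct and essentially identical to the paper's: both rewrite $\EE_{B,1}(t)$ as a sum over $k$ of centered batch statistics $\sum_{i\in B_k}Y_i-\beta\mu$ with $Y_i=(\UU\DD\ww_k^\vartheta)_i(\UU\ww_k^\vartheta)_i$, bound the range via Lemma~\ref{lemma:uwt} and the variance via Lemma~\ref{lemma: stoppingtime_norm}, apply the Bernstein sampling-without-replacement bound (Proposition~\ref{prop:concent}) at each $k$, and conclude by a union bound over the $O(T^2)$ pairs $(t,k)$. Your exponent bookkeeping matches (indeed your $\sigma_Y^2=O(n^{2\alpha+2\theta-2})$ is the correct order; the paper's displayed $O(n^{2\alpha+2\theta-1})$ appears to be a typo).
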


\begin{proof}
Our strategy is to apply Proposition \ref{prop:concent} as well as Lemma \ref{lemma:uwt}. Recall that
\[
\begin{aligned}
\EE_{B,1}(t) &= \sum_{k=0}^t \left(\sum_{j\in[n]} \frac{1}{\Omega_j^2-4\Delta} (-\Delta\cdot\lambda_{1,j}^{t-k} + \frac{\lambda_{2,j}}{2}\cdot\lambda_{2,j}^{t-k} + \frac{\lambda_{3,j}}{2}\cdot\lambda_{3,j}^{t-k})\EE_{B,1}^{(j)}(k)
\right),\\
&= \sum_{k=0}^t \left(\sum_{j\in[n]} C^{(j)}(t,k) w_{t,j}\sum_{l\in[n]}\EE_B^{(l,j)}w_{t,l} \right),  
\end{aligned}
\]
where $C^{(j)}(t,k) \defas \frac{1}{\Omega_j^2-4\Delta} (-\Delta\cdot\lambda_{1,j}^{t-k} + \frac{\lambda_{2,j}}{2}\cdot\lambda_{2,j}^{t-k} + \frac{\lambda_{3,j}}{2}\cdot\lambda_{3,j}^{t-k}) \cdot (-2\gamma\sigma_j^2)$. Let us define
\[
X_i^{(t,k)} \defas \sum_{j\in[n]} C^{(j)}(t,k) U_{ij}w_{t,j}^\vartheta \sum_{l\in[n]} U_{il}w_{t,l}^\vartheta,\ \text{and}\ \mu_{(t,k)} = \frac{1}{n}\sum_{i\in[n]} X_i^{(t,k)} = \frac{1}{n}\sum_{j\in[n]} C^{(j)}(t,k) (w_{t,j}^\vartheta)^2,
\]
so that $\EE_{B,1}(t\wedge\vartheta) = \sum_{k=0}^t \left(\sum_{i\in B} X_i^{(t,k)} - \beta \mu_{(t,k)} \right)$. Let $\sigma_{(t,k)}^2$ be the variance of $X_i^{(t,k)}$:
\[
\sigma_{(t,k)}^2 \defas \frac{1}{n}\sum_{i\in[n]} \left( \sum_{j\in[n]} C^{(j)}(t,k) U_{ij}w_{t,j}^\vartheta \sum_{l} U_{il}w_{t,l}^\vartheta \right)^2 - \left(\frac{1}{n}\sum_{j\in[n]} C^{(j)}(t,k) (w_{t,j}^\vartheta)^2 \right)^2.
\]
In order to determine its order, note that
\[
\frac{1}{n}\sum_{i\in[n]} \left( \sum_{j\in[n]} C^{(j)}(t,k) U_{ij}w_{t,j}^\vartheta \sum_{l} U_{il}w_{t,l}^\vartheta \right)^2 = \frac{1}{n}\sum_{i\in[n]} (\UU\SSigma_{C}\ww_t^\vartheta)_i^2(\UU\ww_t^\vartheta)_i^2,
\]
where $\SSigma_{C} \defas \diag \{C^{(j)}(t,k)\}_{j\in[n]}$. By applying Lemma \ref{lemma:uwt}, we have with overwhelming probability
\[
\sigma^2 \le \max_i \|(\UU\ww_t^\vartheta)_i\|^2 \frac{1}{n}\|\UU\SSigma_{C}\ww_t^\vartheta\|_2^2 \le \|(\UU\ww_t^\vartheta)_i\|^2 \frac{1}{n} \|\SSigma_{C}\|_2^2 \|\ww_t^\vartheta\|_2^2  \le   \mathcal{O}(n^{2\alpha(t)+2\theta-1}). 
\]
Now Proposition \ref{prop:concent} gives
\[
\begin{aligned}
\text{Pr} \left(\frac{1}{\beta}\sum_{i=1}^\beta X_i^{(t,k)} - \mu_{(t,k)} \ge \tilde{\epsilon} \right) &\le \exp \left(-\frac{\beta\tilde{\epsilon}^2}{2\sigma_{(t,k)}^2 + (2/3)(b-a)\tilde{\epsilon}} \right),
\end{aligned}
\]
where, by using Chebyshev's inequality and applying Lemma \ref{lemma:uwt} again,
\[ 
\begin{aligned}
b &= \max_{1\le i\le n} \left( \sum_{j\in[n]} C^{(j)}(t,k) U_{ij}w_{t,j}^\vartheta \sum_{l} U_{il}w_{t,l}^\vartheta \right)\\ 
&\le
\max_i |(\UU\ww_t^\vartheta)_i|\sqrt{\sum_{j\in[n]} (C^{(j)}(t,k))^2(w_{t,j}^\vartheta)^2}\sqrt{\sum_{j\in[n]} U_{ij}^2 } = \mathcal{O}(n^{\alpha(t)+\theta-1/2})\ \text{w.o.p.}
\end{aligned}
\]
So by applying the same argument used in Proposition \ref{prop:concent}, and applying the union bound, we have
\[
\mathbb{P}\left(|\EE_{B,1}(t)| \ge \tilde{\epsilon} \right) \le T \cdot \mathbb{P}\left(\left| \sum_{i=1}^\beta X_i^{(t,k)} - \beta\mu_{(t,k)}\right| \ge c\tilde{\epsilon} \right)  \searrow 0\ \text{as}\ n\to\infty\ \text{when}\ \tilde{\epsilon} = n^{-1/2+\alpha'(t)},
\]
for $c = 1/t$ and any $1/2 > \alpha'(t) > \alpha(t) + \theta$. Note that $\theta$ can be taken as small as possible. Now taking maximum over $t, 0\le t\le T\wedge \vartheta, t\in\mathbb{N}$, gives the claim, with $\alpha' \defas \alpha'(T\wedge \vartheta)$.
\end{proof}

\subsubsection{Control of $\EE_{B^2}^{(j)}(t)$}
This section deals with controlling the error $\EE_{B^2}^{(j)}(t)$. Recall that
\[
\begin{aligned}
\EE_{B^2}(t) &= \sum_{k=0}^t \left(\sum_{j\in[n]} \frac{1}{\Omega_j^2-4\Delta} (-\lambda_{1,j}\cdot\lambda_{1,j}^{t-k} + \frac{\lambda_{2,j}}{2}\cdot\lambda_{2,j}^{t-k} + \frac{\lambda_{3,j}}{2}\cdot\lambda_{3,j}^{t-k})\EE_{B^2}^{(j)}(k)
\right)\\
&= \sum_{k=0}^t \left( \sum_{j\in[n]} C^{(j)}(t,k) \Bigg( \big(\sum_{l\in [n]} w_{t,l}(\sum_{i\in B_k} U_{ij}U_{il})\big)^2 - \mathbb{E}\Big[\big(\sum_{l\in [n]} w_{t,l}(\sum_{i\in B_k} U_{ij}U_{il})\big)^2 \Big| \mathcal{F}_k \Big]\Bigg)  \right),
\end{aligned}
\]
where $ C^{(j)}(t,k) \defas \frac{\gamma^2\sigma_j^4}{\Omega_j^2-4\Delta} (-\lambda_{1,j}\cdot\lambda_{1,j}^{t-k} + \frac{\lambda_{2,j}}{2}\cdot\lambda_{2,j}^{t-k} + \frac{\lambda_{3,j}}{2}\cdot\lambda_{3,j}^{t-k})$. Observe that the expression in the summand of $k$ can be translated as a \textit{quadratic form}:
\[
\begin{aligned}
\sum_{j\in[n]} C^{(j)}(t,k)\big(\sum_{l\in [n]} w_{k,l}(\sum_{i\in B_k} U_{ij}U_{il} )\big)^2 &= \sum_{j\in[n]} C^{(j)}(t,k) \left( e_j^T \UU^T\PP_k\UU\ww_k\right)^2\\
&= (\UU\ww_k)^T\PP_k \UU \SSigma_C \UU^T \PP_k (\UU\ww_k),
\end{aligned}
\]
where $\SSigma_C \defas \diag \{C^{(j)}(t,k)\}_{j\in[n]}$. Let $\XX_k \defas \PP_k (\UU\ww_k)$ and $\DD \defas \UU\SSigma_C \UU^T$. Note that, for a fixed time $t$ and $k$, and conditioned on $\UU$, $\DD$ is a fixed symmetric matrix and $\XX_k$ has a randomness only depending on $\PP_k$. Therefore, our error $\EE_{B^2}(t)$ can be expressed as
\begin{equation}\label{eq:error_squared}
\EE_{B^2}(t) = \sum_{k=0}^t \left(\XX_k^T\DD\XX_k - \mathbb{E}[\XX_k^T\DD\XX_k|\mathcal{F}_k] \right).
\end{equation}
As we did in the previous section, in view of union bounds, it suffices to impose bounds on each summand of \eqref{eq:error_squared} at $k=0,\cdots,t$. In order to have the \textit{Hanson-Wright} type concentration for our expression, we introduce the concept of \textit{Convex concentration property}.

\begin{definition}[Convex concentration property, \citep{adamczak15}]
Let $\XX$ be a random vector in $\mathbb{R}^n$. We will say that $\XX$ has the convex concentration property with constant $K$ if for every $1-$Lipschitz convex function $\varphi: \mathbb{R}^n \to \mathbb{R}$. we have $\mathbb{E}[\varphi(\XX)]<\infty$ and for every $t>0$,
\[
\text{Pr}(|\varphi(\XX)-\mathbb{E}\varphi(\XX)| \ge t) \le 2 \exp(-t^2/K^2).
\]
\end{definition}

\begin{remark}
 By a simple scaling, the previous remark can extend to $x_1,\cdots, x_n \in [a,b]$, in which case $K$ in the definition above will be replaced by $K(b-a)$.
\end{remark}

What is interesting for us is that vectors obtained via sampling without replacement follow the convex concentration property (\citep[Remark 2.3]{adamczak15}). More precisely, if $x_1,\cdots,x_n\in[0,1]$ and for $m\le n$ the random vector $\XX = (X_1,\cdots, X_m)$ is obtained by sampling without replacement $m$ numbers from the set $\{x_1, \cdots, x_n\}$, then $\XX$ satisfies the convex concentration property with an absolute constant $K$. In this sense, the following lemma (\citep[Theorem 2.5]{adamczak15}) will be useful to us.

\begin{lemma}[Hanson-Wright concentration for sampling without replacement]\label{lemma: convexHW}
Let $\XX$ be a mean zero random vector in $\mathbb{R}^n$. If $\XX$ has the convex concentration property with constant $K$, then for any $n\times n$ matrix $\AA$ and every $t>0$,
\[
\mathbb{P}(|\XX^T\AA\XX - \mathbb{E}\XX^T\AA\XX | \ge t) \le 2\exp \left( -\frac{1}{C}\min\left(\frac{t^2}{2K^4\|\AA\|_{HS}^2}, \frac{t}{K^2\|\AA\|} \right)\right),
\]
for some universal constant $C$.
\end{lemma}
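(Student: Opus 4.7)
The plan is to follow the Adamczak--Wolff strategy for lifting the convex concentration hypothesis on $\XX$ to a Hanson--Wright-type bound on the quadratic form $\XX^T\AA\XX$. Since $\XX^T\AA\XX = \XX^T((\AA+\AA^T)/2)\XX$, I may assume $\AA$ is symmetric at the cost of absolute constants in $\|\AA\|$ and $\|\AA\|_{HS}$. I would then decompose $\AA = \AA_+ - \AA_-$ into positive/negative-semidefinite parts, each with operator norm at most $\|\AA\|$ and Hilbert--Schmidt norm at most $\|\AA\|_{HS}$, and prove the bound separately for each PSD summand before recombining.

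For a PSD matrix $\BB$, the map $g(\xx) = \|\BB^{1/2}\xx\|_2$ is convex and $\|\BB\|^{1/2}$-Lipschitz, so the convex concentration property directly yields
\[
\Pr(|g(\XX) - \mathbb{E} g(\XX)| \geq s) \leq 2\exp(-s^2/(K^2\|\BB\|)).
\]
Squaring via $g(\XX)^2 - (\mathbb{E} g(\XX))^2 = (g(\XX)-\mathbb{E} g(\XX))(g(\XX)+\mathbb{E} g(\XX))$, together with the variance bound $\mathbb{E} g(\XX)^2 - (\mathbb{E} g(\XX))^2 \leq CK^2\|\BB\|$ that follows from sub-Gaussian tails, transfers the deviation bound to $\XX^T\BB\XX = g(\XX)^2$. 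Summing the two PSD contributions reconstructs the subexponential (operator-norm) branch of the $\min$ in the statement. To get the sharper sub-Gaussian branch with $\|\AA\|_{HS}^2$, I would introduce an independent copy $\XX'$ of $\XX$ and decouple, writing $\XX^T\AA\XX - \mathbb{E}\XX^T\AA\XX$ as a sum of the bilinear form $\XX^T\AA\XX'$ and a diagonal correction absorbed by the previous step. Conditionally on $\XX'$, the map $\xx \mapsto \xx^T\AA\XX'$ is linear and $\|\AA\XX'\|_2$-Lipschitz, so convex concentration gives sub-Gaussian tails with variance proxy $K^2\|\AA\XX'\|_2^2$. Integrating over $\XX'$ and using $\mathbb{E}\|\AA\XX'\|_2^2 \leq CK^2\|\AA\|_{HS}^2$ (obtained by applying convex concentration to coordinate-linear functions to bound the covariance of $\XX'$) yields the $K^4\|\AA\|_{HS}^2$ factor. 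A Chernoff/Laplace-transform combination of the two tails produces the stated $\min$ form.

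The main obstacle is the decoupling step needed to obtain the Frobenius-norm dependence: convex concentration is not preserved under quadratic maps, so direct squaring of a Lipschitz function only reveals the operator-norm factor $\|\AA\|$. Replacing the quadratic form by the conditionally linear form $\XX^T\AA\XX'$ is essential, and requires a careful truncation to the event $\{\|\AA\XX'\|_2 \lesssim K\|\AA\|_{HS}\}$ with matching Laplace-transform control on its complement to prevent tail inflation. Everything else then reduces to routine sub-Gaussian moment bookkeeping and unioning the two exponential rates.
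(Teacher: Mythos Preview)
The paper does not prove this lemma at all; it is quoted verbatim as \cite[Theorem~2.5]{adamczak15} and used as a black box. So there is no ``paper's own proof'' to match.

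As an independent attempt at Adamczak's result, your operator-norm branch is fine: $g(\xx)=\|\BB^{1/2}\xx\|$ is convex and $\|\BB\|^{1/2}$-Lipschitz, convex concentration gives sub-Gaussian tails for $g(\XX)-\mathbb{E}g(\XX)$, and squaring produces the $t/(K^2\|\AA\|)$ rate. That part is exactly Adamczak's argument.

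The gap is in the Hilbert--Schmidt branch. Your plan is to ``decouple, writing $\XX^T\AA\XX-\mathbb{E}\XX^T\AA\XX$ as a sum of the bilinear form $\XX^T\AA\XX'$ and a diagonal correction.'' There is no such algebraic identity, and the standard probabilistic decoupling inequalities that compare tails of $\XX^T\AA\XX$ and $\XX^T\AA\XX'$ (de la Pe\~na--Gin\'e, or the version in Vershynin's proof of Hanson--Wright) all require the coordinates of $\XX$ to be independent. Under the bare convex concentration hypothesis no independence is available, so the decoupling step simply does not go through; the truncation and Laplace bookkeeping you describe do not address this. If you try instead to square the Lipschitz bound directly, the sub-Gaussian variance proxy you get is $K^2\|\BB\|\cdot(\mathbb{E}\|\BB^{1/2}\XX\|)^2\le CK^4\|\BB\|\,\tr(\BB)$, which for a general PSD $\BB$ is strictly worse than $K^4\|\BB\|_{HS}^2$.

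Adamczak's actual proof avoids decoupling entirely. After the PSD reduction he performs a dyadic peeling of the spectrum, writing $\BB=\sum_k \BB_k$ with eigenvalues of $\BB_k$ in $[2^{-k-1}\|\BB\|,2^{-k}\|\BB\|]$; on each piece $\|\BB_k\|\,\tr(\BB_k)\asymp\|\BB_k\|_{HS}^2$, so the squared-Lipschitz bound is sharp level by level, and summing over $k$ recovers the $\|\AA\|_{HS}^2$ dependence. That spectral decomposition, not decoupling, is the missing ingredient in your outline.
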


\begin{remark}
The assumption that $\XX$ is centered is introduced just to simplify the statement of the theorem. Note that if $\XX$ has the convex concentration property with constant $K$, then so does $\tilde{\XX} = \XX - \mathbb{E}\XX$. Moreover, observe,
\[
\XX^T\AA\XX = (\tilde{\XX} + \E \XX)^T\AA(\tilde{\XX} + \E \XX) = \tilde{\XX}^T\AA\XX + \tilde{\XX}^T\AA(\E \XX) + (\E \XX)^T \AA \XX + (\E \XX)^T\AA(\E \XX),
\]
and this implies
\[
\begin{aligned}
\PP(|&\XX^T\AA\XX - \E \XX^T\AA\XX | \ge t) \\
&\le
\PP(|\tilde{\XX}^T\AA\tilde{\XX} - \E \XX^T\AA\XX | \ge t/3) + \PP(|\tilde{\XX}^TA(\E \XX) - \E \tilde{\XX}^T\AA(\E \XX) | \ge t/3) \\
&+ \PP(|(\E \XX)^T\AA\XX - \E (\E \XX)^T\AA\XX | \ge t/3) \\
&\le 2\exp \left( -\frac{1}{C}\min\left(\frac{t^2}{2\cdot 9K^4\|\AA\|_{HS}^2}, \frac{t}{3K^2\|\AA\|} \right)\right) + 2\cdot 2\exp\left(-\frac{t^2}{9K^2 \|\AA(\E \XX)\|_2^2}\right).
\end{aligned}
\]
\end{remark}

Finally, we can bound the error $\EE_{B^2}(t)$ using Lemma \ref{lemma: convexHW}.

\begin{prop}\label{prop:HWconcent}
For any $\epsilon>0$, we have
\[ \max_{0\le t\le T\wedge \vartheta}|\EE_{B^2}(t)| = \mathcal{O}(n^{-1/2+2\tilde{\alpha}})\ \text{w.o.p.},
\]
where $1/4 > \tilde{\alpha} > \alpha$, with $\alpha$ from Lemma \ref{lemma:uwt}. %\textcolor{red}{So $\alpha$ has to be less than $1/4$ actually?!}.
\end{prop}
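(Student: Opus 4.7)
The plan is to apply the Hanson--Wright style inequality (Lemma~\ref{lemma: convexHW}) to each summand of $\EE_{B^2}(t)$, conditionally on $\mathcal{F}_k$, and then take a union bound over $(t,k)\in\{0,\dots,T\}^2$. Recall that $\EE_{B^2}(t)=\sum_{k=0}^t Q_{t,k}$ with
\[
Q_{t,k}\defas \XX_k^T \DD_{t,k}\XX_k-\mathbb{E}[\XX_k^T \DD_{t,k}\XX_k\mid \mathcal{F}_k],\quad \XX_k\defas \PP_k\UU\ww_k^{\vartheta},\quad \DD_{t,k}\defas \UU\SSigma_{C,t,k}\UU^T,
\]
where $\SSigma_{C,t,k}\defas\diag\{C^{(j)}(t,k)\}_{j\in[n]}$ has uniformly bounded entries under the learning-rate hypothesis. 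Conditionally on $\mathcal{F}_k$, the matrix $\UU$ and the vector $\UU\ww_k^{\vartheta}$ are fixed, and $\XX_k$ is formed by sampling $\beta$ out of $n$ coordinates of $\UU\ww_k^{\vartheta}$ uniformly without replacement. By Lemma~\ref{lemma:uwt}, on an overwhelming-probability $\mathcal{F}_k$-measurable event we have $\max_i|(\UU\ww_k^{\vartheta})_i|=\mathcal{O}(n^{\alpha-1/2})$, so by the rescaling remark following the definition of convex concentration, $\XX_k$ enjoys the convex concentration property with constant $K=\mathcal{O}(n^{\alpha-1/2})$.

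Next, I would observe that $\DD_{t,k}$ is unitarily conjugate to the diagonal $\SSigma_{C,t,k}$, so $\|\DD_{t,k}\|=\mathcal{O}(1)$ and $\|\DD_{t,k}\|_{HS}=\mathcal{O}(\sqrt{n})$; moreover $\|\DD_{t,k}\mathbb{E}[\XX_k\mid \mathcal{F}_k]\|_2\le \zeta\|\DD_{t,k}\|\|\UU\ww_k^{\vartheta}\|_2=\mathcal{O}(n^{\theta})$ by Lemma~\ref{lemma: stoppingtime_norm}. Applying Lemma~\ref{lemma: convexHW} in the non-centered form of the subsequent remark with $s\defas n^{-1/2+2\tilde{\alpha}}$, for any $\tilde{\alpha}\in(\alpha+\theta,1/4)$ and with $\theta$ chosen sufficiently small, the three exponents
\[
\frac{s^2}{K^4\|\DD_{t,k}\|_{HS}^2}\asymp n^{4(\tilde{\alpha}-\alpha)},\quad \frac{s}{K^2\|\DD_{t,k}\|}\asymp n^{\frac12+2(\tilde{\alpha}-\alpha)},\quad \frac{s^2}{K^2\|\DD_{t,k}\mathbb{E}\XX_k\|_2^2}\asymp n^{4\tilde{\alpha}-2\alpha-2\theta}
\]
all grow polynomially in $n$, so $\Pr(|Q_{t,k}|\ge s\mid \mathcal{F}_k)\le \exp(-n^c)$ on the event of Lemma~\ref{lemma:uwt} for some $c>0$. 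The law of total probability then gives $|Q_{t,k}|\le n^{-1/2+2\tilde{\alpha}}$ w.o.p.

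Finally, I would take a union bound over the $\mathcal{O}(T^2)$ pairs $(t,k)$ and invoke $\max_{0\le t\le T\wedge\vartheta}|\EE_{B^2}(t)|\le (T+1)\max_{(t,k)}|Q_{t,k}|$; for fixed $T$, the constant factor $T+1$ is absorbed by enlarging $\tilde{\alpha}$ by an infinitesimal amount still within $(\alpha,1/4)$, yielding the claimed bound. The main obstacle is verifying that the convex concentration constant genuinely scales like $n^{\alpha-1/2}$ rather than $\mathcal{O}(1)$: one must carefully rescale the sampling-without-replacement vector so that the coordinate-wise bound from Lemma~\ref{lemma:uwt} is reflected inside $K$, and one must intersect the $\mathcal{F}_k$-measurable overwhelming event with the Hanson--Wright bound cleanly to avoid a circular conditioning argument.
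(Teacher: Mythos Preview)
Your proposal is correct and follows essentially the same route as the paper: apply the Hanson--Wright inequality for sampling without replacement (Lemma~\ref{lemma: convexHW}) to each summand $Q_{t,k}$, with the convex concentration constant rescaled by $M_k=\max_i|(\UU\ww_k^\vartheta)_i|=\mathcal{O}(n^{\alpha-1/2})$ from Lemma~\ref{lemma:uwt}, the bounds $\|\DD_{t,k}\|=\mathcal{O}(1)$, $\|\DD_{t,k}\|_{HS}^2=\mathcal{O}(n)$, $\|\DD_{t,k}\mathbb{E}\XX_k\|_2=\mathcal{O}(n^\theta)$, and then union-bound over $(t,k)$. Your explicit verification that all three exponents grow polynomially for $\tilde{\alpha}\in(\alpha+\theta,1/4)$ is in fact slightly more careful than the paper's write-up, and your remark about cleanly intersecting the $\mathcal{F}_k$-measurable overwhelming event from Lemma~\ref{lemma:uwt} with the conditional Hanson--Wright bound is exactly the right caveat.
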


\begin{proof}
Recall that
\[
\EE_{B^2}(t) = \sum_{k=0}^t \left(\XX_k^T\DD\XX_k - \mathbb{E}[\XX_k^T\DD\XX_k|\mathcal{F}_k] \right),
\]
and we apply Lemma \ref{lemma: convexHW} to each summand of $\EE_{B^2}(t\wedge\vartheta)$. More precisely,

\begin{itemize}
    \item $K$ is replaced by $K\cdot M_k$, where $M_k \defas \max_{l\in[n]} |(\UU\ww_k^\vartheta)_l| = \mathcal{O}(n^{\alpha(k)-1/2}),$ by Lemma \ref{lemma:uwt}.
    \item Observe that 
    \[
    \|\DD\|_{HS}^2 \le \|\SSigma_{C}\|_{HS}^2 = \mathcal{O}(n),
    \]
    and
    \[
     \|\DD\| = \|\SSigma_{C}\| = \mathcal{O}(1).
     \]
    \item $\E \XX_k = (\mu_1, \cdots, \mu_n)$ where $\mu_l = \frac{\beta}{n} (\UU\ww_k^\vartheta)_l, l\in[n]$, so that $\|\DD \E \XX\|_2 \le \|\DD\|_2 \|\E \XX\|_2 \le \mathcal{O}(n^{\theta})$.
\end{itemize}

Therefore, by using Lemma \ref{lemma: convexHW}, we have
\[
\begin{aligned}
\PP(|&\XX_k^T\DD\XX_k - \E \XX_k^T\DD\XX_k | \ge \tilde{\epsilon} | \mathcal{F}_k) \\
&\le 2\exp \left( -\frac{1}{C}\min\left(\frac{\tilde{\epsilon}^2}{2\cdot 9M_k^4K^4\|\DD\|_{HS}^2}, \frac{\tilde{\epsilon}}{3M_k^2K^2\|\DD\|} \right)\right)\\
& \quad + 2\cdot 2\exp\left(-\frac{\tilde{\epsilon}^2}{M_k^2K^2 \|\DD(\E \XX_k)\|_2^2}\right),
\end{aligned}
\]

and for $\tilde{\epsilon} = n^{2\tilde{\alpha}(k) -1/2}, 1/4> \tilde{\alpha}(k) >\alpha(k)$, we obtain the desired concentration result. Now taking union bound over $k=0,\cdots, T\wedge \vartheta$ gives the desired result, with $\tilde{\alpha} \defas \tilde{\alpha}(T\wedge \vartheta)$.
\end{proof}

\subsection{Proof of Theorem \ref{thm:concentraion_main_detailed}}
\begin{proof}[Proof of Theorem \ref{thm:concentraion_main_detailed}]
We have observed that Proposition \ref{prop:error_initial}, Proposition \ref{prop:betaerror}, Proposition \ref{prop:error_KL}, Proposition \ref{prop:error_EB1} and Proposition \ref{prop:HWconcent} imply that there exists $C>0$ such that for any $c>0$, there exists $D>0$ such that
\[
\text{Pr}\left[ \sup_{0\le t\le T\wedge \vartheta, t\in\mathbb{N}}|\EE(t)| > n^{-C} \right] < Dn^{-c}.
\]
Now combining this result with Lemma \ref{lemma: stoppingtime_norm} proves the Theorem.
\end{proof}

\section{Proof of Main Results}\label{apx:proof_main_results}
In this section, we prove various statements from Section \ref{sec:convolution_volterra_analysis}. First, we analyze assumptions on the learning rate $\gamma$ so that the kernel $\mathcal{K}$ is convergent (Proposition \ref{prop:kernel_norm}). Second, we define the Malthusian exponent and show under which conditions the convergence rate of our algorithm is determined by $\lambda_{2,\max}$ (Proposition \ref{prop:low_noise_regime}). Third, We find an optimal set of learning rate and momentum parameter so that the SGD+M outperforms SGD in the large batch regime (Proposition \ref{prop:lambda_delta_choice}). Lastly, we show the lower bound of the convergence rate of SGD+M in the small batch regime (Proposition \ref{prop:lb}).
% In this section, we prove various statements for Theorem \ref{thm : convrate}. Before jumping into the main theorem, it is worthwhile to establish conditions for the convergence of our solution to the Volterra equation \eqref{eq:Volterra_eq_main}. Our equation can be seen as the \textit{renewal equation} (\cite{asmussen03}). Let us translate \eqref{eq:Volterra_eq_main} as the expression of renewal equation as follows:
% \begin{equation}\label{eq: renewal_eq}
%     \psi(t+1) = F(t+1) + (\K\ast\psi)(t),
% \end{equation}
% where $F(t) := \frac{R}{2}h_1(t) + \frac{\tilde{R}}{2}h_0(t)$ denotes the \textit{forcing term} and $\K(t) := \gamma^2\zeta(1-\zeta)H_2(t)$ denotes the \textit{noise term}. And let $\|\K\| = \sum_{t=0}^\infty \K(t)$ denote its \textit{norm}. By Proposition 7.4 in \cite{asmussen03}, we see that $\|\K\| < 1$ is necessary for our solution to the Volterra equation to be convergent. In this section, we deduce a convergence threshold based on two conditions (1) a stepsize assumption involving the largest eigenvalue of $\AA^T \AA$  and (2) a trace condition on $\AA\AA^T$.

\subsection{Learning rate assumption and kernel bound}
First, we show that the kernel $\K$ is always a nonnegative function, regardless of whether the eigenvalues $\{\lambda_{2,j}, \lambda_{3,j}\}, j\in[n]$ are real or complex values. 

\begin{lemma}[Positivity of the kernel] \label{lemma: kernel_positive}
The kernel function satisfies $\K(t) \ge 0$ for any $t\ge0$.
\end{lemma}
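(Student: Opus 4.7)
The plan is to show each summand of $\mathcal{K}(t) = \gamma^{2}\zeta(1-\zeta)H_{2}(t)$ is nonnegative for every $j\in[n]$ and every $t\ge 0$. The prefactor $\gamma^{2}\zeta(1-\zeta)$ is clearly nonnegative for batch fractions $\zeta\in[0,1]$, and $\sigma_{j}^{4}\ge 0$. So after peeling these off, the task reduces to verifying that
\[
g_{j}(t)\ \defas\ \frac{2}{\Omega_{j}^{2}-4\Delta}\Bigl(-\Delta^{\,t+1}+\tfrac{1}{2}\lambda_{2,j}^{\,t+1}+\tfrac{1}{2}\lambda_{3,j}^{\,t+1}\Bigr)\ \ge\ 0
\]
for each $j$. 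The essential structural fact I will use is the identity $\lambda_{2,j}\lambda_{3,j}=\Delta^{2}$, which follows directly from the explicit formulas in \eqref{eq:lambda_values} (or equivalently from the characteristic polynomial of $\MM_{j}$, which factors as $(\lambda-\Delta)(\lambda^{2}+(2\Delta-\Omega_{j}^{2})\lambda+\Delta^{2})$).

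I would then split into the two natural cases determined by the sign of $\Omega_{j}^{2}-4\Delta$. If $\Omega_{j}^{2}>4\Delta$ the roots $\lambda_{2,j},\lambda_{3,j}$ are real and (since their product $\Delta^{2}\ge 0$ and their sum $\Omega_{j}^{2}-2\Delta>0$) both nonnegative. Writing $\lambda_{2,j}=r\Delta$, so $\lambda_{3,j}=\Delta/r$ for some $r>0$, gives
\[
\tfrac{1}{2}\lambda_{2,j}^{\,t+1}+\tfrac{1}{2}\lambda_{3,j}^{\,t+1}-\Delta^{\,t+1}\ =\ \tfrac{\Delta^{\,t+1}}{2}\bigl(r^{\,t+1}+r^{-(t+1)}-2\bigr)\ \ge\ 0
\]
by AM--GM (or the elementary inequality $x+1/x\ge 2$), and the denominator $\Omega_{j}^{2}-4\Delta$ is also positive, so $g_{j}(t)\ge 0$. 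The boundary case $\Delta=0$ collapses to $\tfrac{1}{2}\Omega_{j}^{2(t+1)}\ge 0$ and is immediate.

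If $\Omega_{j}^{2}<4\Delta$, the roots are complex conjugates and $\lambda_{2,j}\lambda_{3,j}=\Delta^{2}$ forces $|\lambda_{2,j}|=|\lambda_{3,j}|=\Delta$. Writing $\lambda_{2,j}=\Delta e^{i\theta}$ for some real $\theta$, the numerator becomes
\[
-\Delta^{\,t+1}+\Delta^{\,t+1}\cos\bigl((t+1)\theta\bigr)\ \le\ 0,
\]
while the denominator $\Omega_{j}^{2}-4\Delta$ is negative, so $g_{j}(t)\ge 0$ again. The borderline $\Omega_{j}^{2}=4\Delta$ is a removable indeterminacy: all three eigenvalues of $\MM_{j}$ coincide at $\Delta$, and $g_{j}(t)$ extends continuously (equivalently, compute via the confluent limit $r\to 1$, or via the Jordan form). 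So the conclusion $g_{j}(t)\ge 0$ passes through by continuity.

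The only mildly delicate step is the complex case, where one must be careful that the ratio of a non-positive numerator to a negative denominator is still nonnegative, and that the identification $|\lambda_{2,j}|=\Delta$ is used correctly; this is where the identity $\lambda_{2,j}\lambda_{3,j}=\Delta^{2}$ does all the work. Everything else is routine algebra.
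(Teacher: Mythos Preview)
Your proposal is correct and follows essentially the same approach as the paper: split on the sign of $\Omega_{j}^{2}-4\Delta$, use $\lambda_{2,j}\lambda_{3,j}=\Delta^{2}$ together with AM--GM in the real case, and the polar form $\lambda_{2,j}=\Delta e^{i\theta}$ in the complex case. If anything, you are slightly more careful than the paper, which folds the degenerate boundary $\Omega_{j}^{2}=4\Delta$ into the real case without commenting on the $0/0$ indeterminacy; your continuity remark handles this cleanly.
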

\begin{proof}
Fix $j\in[n]$ and let
\begin{equation*}
  H_{2,j}(t) \defas \frac{2\sigma_j^4}{\Omega_j^2-4\Delta} \Big(-\Delta\cdot\Delta^{t} + \frac{1}{2}\lambda_{2,j}\cdot\lambda_{2,j}^{t} + \frac{1}{2}\lambda_{3,j}\cdot\lambda_{3,j}^{t}\Big)
\end{equation*}
be the $j$-th summand of $H_2(t)$. We address two cases. In the first case, assume $\Omega_j^2 - 4\Delta \geq 0$. Then $\lambda_{2,j}$ and $\lambda_{3,j}$ are positive real numbers and one can easily verify that $\lambda_{2,j} \ge \Delta \ge \lambda_{3,j}$ and $\lambda_{2,j}\lambda_{3,j} = \Delta^2$. By the arithmetic-geometric inequality, we have
\begin{equation*}
    H_{2,j}(t) \geq \frac{2\sigma_j^4}{\Omega_j^2-4\Delta} \Big(-\Delta^{t+1} + \sqrt{\lambda_{2,j}^{t+1}\lambda_{3,j}^{t+1}}\Big) = \frac{2\sigma_j^4}{\Omega_j^2-4\Delta} \Big(-\Delta^{t+1} + \Delta^{t+1}\Big) = 0.
\end{equation*}

In the second case, we assume $\Omega_j^2 - 4\Delta < 0$. In this case, $\lambda_{2,j}$ and $\lambda_{3,j}$ are complex conjugates with magnitude $\Delta$, and therefore we have the relation 
\begin{equation*}
    \lambda_{2,j}^{t} = \Delta^t e^{i\theta_j t}, \quad\text{and}\ \lambda_{3,j}^t = \Delta^t e^{-i\theta_j t},
\end{equation*}
for some $\theta_j \in \mathbb{R}$. By Euler's formula, we obtain
\begin{equation*}
    -\Delta^{t+1} + \frac{1}{2} \Big(\lambda_{2,j}^{t+1} + \lambda_{3,j}^{t+1}\Big) = -\Delta^{t+1} + \Delta^{t+1} \cos(\theta_j t) \le 0.
\end{equation*}
and combined with the condition $\Omega_j^2-4\Delta <0$ gives $H_{2,j}(t) \ge 0$. Hence these two cases give the claim.
\end{proof}

% Two conditions will be needed to establish a convergence threshold:
% \begin{equation}\begin{aligned}
%     \text{(learning rate condition)} \, \, \gamma \le \frac{1+\Delta}{\zeta \sigma_{max}^2} \quad \text{and} \quad 
%      \text{(trace condition)} \, \, \frac{(1-\zeta)\gamma}{1-\Delta} \frac{1}{n} \tr(\AA \AA^T) < 1.
% \end{aligned}
% \end{equation}
% \textcolor{red}{State the trace condition and the stepsize condition}

The next proposition establishes that, under an upper bound on the learning rate, the maximum of the eigenvalues $\{\lambda_{2,j}\}$ for $j\in[n]$ has its magnitude less than one. Let $\lambda_{2,\max} \defas \max_j |\lambda_{2,j}|$. A simple computation shows that when $\lambda_{2,j}$ is complex then $|\lambda_{2,j}| = \Delta$.  
In particular, when all the eigenvalues $\lambda_{2,j}$ are complex numbers, $\lambda_{2,\max} = \Delta$. Otherwise, $\lambda_{2,\max} > \Delta$.
Recall again that $\sigma_{\max}^2$ and $\sigma_{\min}^2$ be the largest and smallest (nonzero) eigenvalue of $\AA\AA^T$, respectively.

\begin{prop}\label{prop:lambda_max}
If $\gamma < \frac{2(1+\Delta)}{\zeta \sigma_{max}^2}$ and $0\le\Delta<1$, then $ \lambda_{2,\max} < 1$.
\end{prop}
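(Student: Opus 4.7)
The plan is to split into the two cases already identified in Lemma~\ref{lemma: kernel_positive}, according to the sign of the discriminant $\Omega_j^2-4\Delta$, and in each case derive $|\lambda_{2,j}|<1$ from a single uniform bound on $|\Omega_j|$.

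First, I would observe that the learning rate hypothesis together with $0\le \gamma\zeta\sigma_j^2 \le \gamma\zeta\sigma_{\max}^2<2(1+\Delta)$ yields
\[
-(1+\Delta) \;<\; \Omega_j \;=\; 1+\Delta-\gamma\zeta\sigma_j^2 \;\le\; 1+\Delta,
\]
so that $|\Omega_j|<1+\Delta$ for every $j\in[n]$ (the upper bound is not attained strictly, but what matters is $|\Omega_j|<1+\Delta$, which follows since $\Delta\ge0$ and the left inequality is strict).

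In the complex case $\Omega_j^2-4\Delta<0$, the roots $\lambda_{2,j},\lambda_{3,j}$ are complex conjugates satisfying $\lambda_{2,j}\lambda_{3,j}=\Delta^2$ (by direct multiplication in \eqref{eq:lambda_values}), hence $|\lambda_{2,j}|=\Delta<1$ by hypothesis.

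In the real case $\Omega_j^2-4\Delta\ge 0$, I would use $\sqrt{\Omega_j^2(\Omega_j^2-4\Delta)}=|\Omega_j|\sqrt{\Omega_j^2-4\Delta}$ to rewrite
\[
\lambda_{2,j} \;=\; \tfrac{1}{2}\bigl(\Omega_j^2-2\Delta+|\Omega_j|\sqrt{\Omega_j^2-4\Delta}\bigr),
\]
which is manifestly nonnegative (the bracket is at least $\Omega_j^2-2\Delta\ge 4\Delta-2\Delta=2\Delta\ge 0$). So it suffices to show $\lambda_{2,j}<1$, or equivalently
\[
|\Omega_j|\sqrt{\Omega_j^2-4\Delta} \;<\; 2(1+\Delta)-\Omega_j^2.
\]
Since $\Delta<1$ we have $\Omega_j^2<(1+\Delta)^2<2(1+\Delta)$, so the right-hand side is strictly positive and squaring is legitimate. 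A short algebraic simplification (the $\Omega_j^4$ terms cancel, and $-4\Delta\Omega_j^2$ cancels against part of $-4(1+\Delta)\Omega_j^2$) reduces the squared inequality to exactly $\Omega_j^2<(1+\Delta)^2$, which is what we already established.

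The main obstacle is purely bookkeeping: one has to keep the two cases in the right order, check that the squaring step is justified by noting $2(1+\Delta)-\Omega_j^2>0$ (which uses $\Delta<1$, not only $\Delta\ge0$), and verify that $\lambda_{2,j}$ is the correct branch with maximum modulus (in the real case, $\lambda_{2,j}\ge\lambda_{3,j}\ge 0$ since $\lambda_{2,j}\lambda_{3,j}=\Delta^2\ge 0$, and in the complex case both roots already have modulus $\Delta$). Taking the maximum over $j\in[n]$ then yields $\lambda_{2,\max}<1$.
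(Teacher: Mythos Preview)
Your proof is correct and follows essentially the same route as the paper: both arguments reduce to the bound $\Omega_j^2<(1+\Delta)^2$ and then feed it into the formula for $\lambda_{2,j}$. Your explicit real/complex case split and the justification of the squaring step are in fact more careful than the paper's direct substitution, which glosses over the complex-discriminant case. One minor wording issue: your parenthetical about the upper bound on $\Omega_j$ is muddled (if $\sigma_j=0$ then $\Omega_j=1+\Delta$ exactly, so $|\Omega_j|<1+\Delta$ fails), but the paper itself tacitly assumes $\sigma_j^2>0$ at the same step, so this is not a gap relative to the original.
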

\begin{proof}
     First observe that
\begin{align*}
    \begin{split}
        \gamma < \frac{2(1+\Delta)}{\zeta \sigma_{\max}^2} 
        &\iff \Omega_{\min} \defas 1 - \gamma \zeta \sigma_{\max}^2 + \Delta > -1 - \Delta,
    \end{split}
\end{align*}
so we conclude $\Omega_j > -1 -\Delta$ for all $j\in [n]$. Note that $\Omega_j$ increases as $\sigma_j$ decreases. Fix $j \in [n]$. First, when $\Omega_{j}$ is non-positive, i.e. 
\begin{equation*}
    0 \geq \Omega_j > -1 -\Delta,
\end{equation*}
this implies $0 \leq \Omega_j < (1+\Delta)^2$. Second, let $\Omega_j \ge 0$. Then by the definition of $\Omega_j = 1-\gamma\zeta\sigma_j^2 + \Delta$, and as $\sigma_j^2 >0$, we have $\Omega_j \le 1 + \Delta$, or $\Omega_j^2 < (1 + \Delta)^2$. So in both cases, we have 
\begin{equation}\label{bound}
    \Omega_j^2 < (1+\Delta)^2.
\end{equation}
Then plugging in \eqref{bound} into the expression of $\lambda_{2,j}$ gives
\begin{align*}
    \begin{split}
        |\lambda_{2,j}| = \bigg|\frac{-2\Delta + \Omega_j^2 + \sqrt{\Omega_j^2(\Omega_j^2 - 4\Delta)}}{2} \bigg| &< \bigg|\frac{\Delta^2 + 1 \sqrt{(1+\Delta)^2(\Delta^2 - 2\Delta +1)}}{2} \bigg| \\
        &= \frac{\Delta^2 + 1 \sqrt{(1+\Delta)^2(\Delta - 1)^2}}{2} \\
        &= \frac{\Delta^2 + 1 + (1+\Delta)(1-\Delta)}{2} \\
        &= 1,
    \end{split}
\end{align*}
where the second last inequality comes from the constraint $0\le\Delta<1$. 
\end{proof}
%The next proposition establishes conditions for the kernel norm to be less than 1.
Now we are ready to prove Proposition \ref{prop:kernel_norm}. 
\paragraph{Proof of Proposition \ref{prop:kernel_norm}}
\begin{proof}
    Note that $\gamma < \frac{1+\Delta}{\zeta \sigma_{\max}^2}$ implies not only $\lambda_{2,\max} < 1$ from Proposition \ref{prop:lambda_max}, but also $\Omega_j >0$ for all $j\in[n]$. %Note that the condition $\sigma_{min} >0$ can be lifted in applying Proposition \ref{prop:lambda_max}, because $H_{2,j}(t) = 0$ when $\sigma_j = 0$. 
    Let $\tilde{C}_j \defas \gamma^2\zeta(1-\zeta)\sigma_j^4/(\Omega_j^2-4\Delta)$ for the following. Using the the fact that $\lambda_{2,j}\lambda_{3,j} = \Delta^2$ and $\lambda_{2,j} + \lambda_{3,j} = -2\Delta + \Omega_j^2,$ we have
    \begin{align*}
    \begin{split}
        \sum_{t=0}^\infty \K(t) 
        &= \sum_{t=0}^\infty \frac{1}{n}\bigg(\sum_{j=1}^{n} \tilde{C}_j(-2\Delta\cdot \Delta^{t} + \lambda_{2,j} \cdot \lambda_{2,j}^{t} + \lambda_{3,j} \cdot \lambda_{3,j}^{t}) \bigg) \\
        &= \frac{1}{n}\sum_{j=1}^{n} \tilde{C}_j\bigg(-2 \frac{\Delta}{1-\Delta} + \frac{\lambda_{2,j}}{1-\lambda_{2,j}} + \frac{\lambda_{3,j}}{1-\lambda_{3,j}} \bigg) \\
        &= \frac{1}{n}\sum_{j=1}^{n}\tilde{C}_j \bigg( \frac{-2 \Delta}{1- \Delta} + \frac{-2\Delta + \Omega_j^2 - 2\Delta^2}{1 + 2\Delta - \Omega_j^2 + \Delta^2} \bigg) \\
        &= \frac{1}{n}\sum_{j=1}^{n}\frac{(1-\zeta)\zeta \gamma^2 \sigma_j^4}{\Omega_j^2 - 4\Delta}\cdot\frac{(1 + \Delta)(\Omega_j^2 - 4\Delta)}{(1-\Delta)(1+\Delta + \Omega_j)(1+\Delta - \Omega_j)}\\
        &= \frac{1}{n}\sum_{j=1}^{n}\frac{(1-\zeta) \gamma \sigma_j^2}{ \Omega_j^2 - 4\Delta}\cdot\frac{(1 + \Delta)(\Omega_j^2 - 4\Delta)}{(1-\Delta)(1+\Delta + \Omega_j)} \\
         &= \frac{1}{n}\sum_{j=1}^{n} \frac{(1-\zeta)\gamma \sigma_j^2 (1+\Delta)}{(1-\Delta)(1+\Delta + \Omega_j)} \\
         &\leq \frac{1}{n}\sum_{j=1}^{n} \frac{(1-\zeta)\gamma \sigma_j^2 }{1-\Delta} = \frac{(1-\zeta)\gamma}{1-\Delta} \cdot \frac{1}{n}\tr(\AA^T\AA) < 1, 
         \end{split}
         \end{align*}
    where $\Omega_j > 0$ was used in the last inequality.
\end{proof}

When the norm of the kernel is less than 1, we can specify the limit of the solution $\psi(t)$ to the Volterra equation when $t\to \infty$, as Proposition \ref{prop:psi_limit} states.

\paragraph{Proof of Proposition \ref{prop:psi_limit}}
\begin{proof}
    This is immediate from \cite[Proposition 7.4]{asmussen03}. In particular, from our expression of the renewal equation \eqref{eq: renewal_eq}, we have
    \[
    \psi(t) \to \frac{F(\infty)}{1-\|\K\|}\quad\text{as}\quad t\to\infty.
    \]
    Now the proof is done once we evaluate the limit of $F(t) = \frac{R}{2}h_1(t) + \frac{\tilde{R}}{2}h_0(t)$. Note that $\lim_{t\to\infty}h_1(t) = 0$. On the other hand, as for $h_0(t)$, if $n>d$, $\sigma_j = 0$ for $j = d+1,\cdots, n$. And for such $j$'s satisfying $\sigma_j = 0$, we can easily verify that $\lambda_{2,j} = 1, \lambda_{3,j} = \Delta^2, \Omega_j = 1+\Delta, \kappa_{2,j} = 1, \kappa_{3,j} = \Delta$. Therefore,
    \[
    \lim_{t\to\infty}h_0(t) = \lim_{t\to\infty} \left\{ \frac{1}{n}\sum_{j=d+1}^n \frac{2}{\Omega_j^2-4\Delta} \big( 0 + \frac{1}{2}(1 - \Delta)^2\cdot 1 +0 \big) \right\} = \frac{n-d}{n} = 1-r,
    \]
    and this proves the claim.
\end{proof}

\subsection{Malthusian exponent and convergence rate}
In this section, we show that the Malthusian exponent $\Xi$ is always smaller than $\lambda_{2,\max}^{-1}$ for a finite dimension $n$. Also, in the problem constrained regime we show that SGD+M shares the same convergence rate with full batch gradient descent with momentum with adjusted learning rate. 

\begin{prop}\label{prop: mal_bound_eigmax}
The Malthusian exponent defined in \eqref{eq: malthusian} satisfies
\[
\Xi < (\lambda_{2,\max})^{-1}
\]
when the dimension $n$ is finite.
\end{prop}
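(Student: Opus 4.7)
The plan is to view \eqref{eq: malthusian} as the level-set equation $M(\Xi) = 1$ for the generating function $M(\xi) \defas \gamma^2\zeta(1-\zeta)\sum_{t=0}^\infty \xi^t H_2(t)$, and then show that $M$ diverges at the endpoint $\xi = \lambda_{2,\max}^{-1}$. Writing $H_2(t) = \tfrac{1}{n}\sum_j H_{2,j}(t)$ in its natural per-index form, the case analysis in the proof of Lemma \ref{lemma: kernel_positive} in fact shows $H_{2,j}(t) \ge 0$ summand-by-summand, so $M$ is non-decreasing on its domain of convergence. Consequently, once I establish $M(\xi) \to +\infty$ as $\xi \uparrow \lambda_{2,\max}^{-1}$, the unique solution $\Xi$ of $M(\Xi) = 1$ must lie strictly below $\lambda_{2,\max}^{-1}$, which is the desired conclusion.

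For the divergence, I would fix an index $j^\star$ with $|\lambda_{2,j^\star}| = \lambda_{2,\max}$ (necessarily $\sigma_{j^\star} > 0$, otherwise the summand vanishes) and lower-bound $M(\xi)$ by the single-index contribution $\gamma^2\zeta(1-\zeta)n^{-1}\sum_{t=0}^\infty \xi^t H_{2,j^\star}(t)$. Two subcases arise. In the real subcase $\Omega_{j^\star}^2 > 4\Delta$, one has $\lambda_{2,j^\star} > \Delta \ge \lambda_{3,j^\star} > 0$, so substituting $\xi = \lambda_{2,j^\star}^{-1}$ turns the $\lambda_{2,j^\star}^{t+1}$ piece of $\xi^t H_{2,j^\star}(t)$ into the fixed positive constant $\lambda_{2,j^\star}\sigma_{j^\star}^4/(\Omega_{j^\star}^2 - 4\Delta)$ while the other two pieces decay geometrically; summing in $t$ diverges. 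In the complex subcase $\Omega_{j^\star}^2 < 4\Delta$, I would write $\lambda_{2,j^\star} = \Delta e^{i\theta_{j^\star}}$ and use the identity already extracted in the proof of Lemma \ref{lemma: kernel_positive} to rewrite $H_{2,j^\star}(t) = \frac{2\sigma_{j^\star}^4 \Delta^{t+1}}{4\Delta - \Omega_{j^\star}^2}\bigl(1 - \cos(\theta_{j^\star}(t+1))\bigr)$; at $\xi = \Delta^{-1} = \lambda_{2,\max}^{-1}$ this becomes a fixed positive multiple of $1 - \cos(\theta_{j^\star}(t+1))$, whose Ces\`{a}ro average equals $1$ because $\theta_{j^\star} \notin 2\pi\mathbb{Z}$ (forced by the strict inequality $\Omega_{j^\star}^2 < 4\Delta$), so the partial sums again diverge.

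The two subcases together give $M(\lambda_{2,\max}^{-1}) = +\infty$ and complete the argument. The only genuinely delicate step is the complex subcase: divergence there rests not on an unbounded term but on an oscillatory nonnegative summand whose long-run average must be pinned away from zero. Everything else reduces to monotonicity of $M$ and geometric decay of subdominant terms, both of which follow immediately once the per-index nonnegativity from Lemma \ref{lemma: kernel_positive} is in hand.
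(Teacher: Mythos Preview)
Your proposal is correct and takes essentially the same approach as the paper: both split into the real and complex cases for $\lambda_{2,j}$, use the polar form $\lambda_{2,j} = \Delta e^{i\theta_j}$ in the complex case, and conclude that $H_2(t)$ decays at exactly rate $\lambda_{2,\max}$. Your version is more explicit about why this forces divergence of the generating function at $\xi = \lambda_{2,\max}^{-1}$ (via monotonicity of $M$ and the Ces\`aro-average argument in the oscillatory case), whereas the paper simply asserts that the decay rate being $\lambda_{2,\max}$ suffices and leaves this last step implicit.
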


\begin{proof}
It suffices to observe that the convergence rate of $H_2(t)$ is determined by $\lambda_{2,\max}$; if all $\lambda_{2,j}, j\in[n]$, are real numbers, then we can easily show that $\lambda_{2,j} > \Delta > \lambda_{3,j}$. Therefore $\lambda_{2,\max}$ takes over the convergence rate of $H_2(t)$. If, for some $j \in [n]$, $\lambda_{2,j}$ and  $\lambda_{3,j}$ are both complex numbers, observe that $|\lambda_{2,j}| = |\lambda_{3,j}| = \Delta$. In that case, if we let $\lambda_{2,j} = \Delta \exp(i\theta_j)$ for some $\theta_j \in \mathbb{R}$, $\lambda_{3,j} = \Delta \exp(-i\theta_j)$ then
\[
\begin{aligned}
-\Delta^{t+1} + \frac{1}{2}\lambda_{2,j}^{t+1} + \frac{1}{2}\lambda_{3,j}^{t+1} &= -\Delta^{t+1} + \frac{1}{2}\Delta^{t+1}\cdot 2\cos(i(t+1)\theta_j) = \Delta^{t+1} (-1 + \cos(i(t+1)\theta_j)).
\end{aligned}
\]
Therefore, $\Delta$ is the governing convergence rate of such $j$-th summand of $H_2(t)$ and the overall convergence rate of $H_2(t)$ is still determined by $\lambda_{2,\max}$. If all $\lambda_{2,j}, j\in [n]$, are complex numbers then the observation above shows that the governing convergence rate of $H_2(t)$ should be $\Delta = \lambda_{2,\max}$ and this proves our claim. 
\end{proof}

When $\lambda_{2,\max}$ takes over the convergence behavior of SGD+M, we can easily see that its convergence dynamics is nothing but its analogue with full batch size but with adjusted learning rate. This can be easily obtained by $\zeta = 1$ in Theorem \ref{thm: concentration_main}, but we provide a statement for full batch SGD+M and its proof for completeness.

\paragraph{Proof of Proposition \ref{prop: concent_fullbatch}}

\begin{proof} 
    Basically, we follow the same arguments introduced in \ref{subsec: f_dynamics}, but with $\zeta = 1$; so we would not have any errors generated by selecting mini-batches. In other words, $\EE_B^{(l,j)} = 0$. This implies the following, which is an analogue of \eqref{eq:system_iter_w}, 
    \begin{equation}
        \begin{pmatrix}
        w_{t+1,j}^2 \\
        w_{t,j}^2 \\
        w_{t+1,j}w_{t,j}
        \end{pmatrix} = \underbrace{\begin{pmatrix}
        \Omega_j^2 &\Delta^2 &-2\Delta\Omega_j\\
        1 & 0 & 0\\
        \Omega_j & 0 & -\Delta 
        \end{pmatrix}}_{=\MM_j}
        \begin{pmatrix}
        w_{t,j}^2 \\
        w_{t-1,j}^2 \\
        w_{t,j}w_{t-1,j}
        \end{pmatrix}.
    \end{equation}
    This implies $w_{t+1,j}^2 = (\MM_j^t\tilde{\mathcal{X}}_{1,j})_1$ and following the same arguments in \ref{subsec: f_dynamics} gives
\[
\begin{aligned}
(\MM_j^t\tilde{\mathcal{X}}_{1,j})_1
&= \frac{2(\frac{R}{n}\sigma_j^2 + \frac{\tilde{R}}{n})}{\Omega_j^2-4\Delta} \left( -\Delta\Gamma_j\cdot \lambda_{1,j}^{t+1} + \frac{1}{2}(1-\Gamma_j - \kappa_{3,j})^2\cdot \lambda_{2,j}^{t+1} + \frac{1}{2}(1-\Gamma_j - \kappa_{2,j})^2\cdot \lambda_{3,j}^{t+1}\right)\\
&+ \frac{2\EE_{w_0}^{(j)}}{\Omega_j^2-4\Delta} \left( -\Delta\Gamma_j\cdot \lambda_{1,j}^{t+1} + \frac{1}{2}(1-\Gamma_j - \kappa_{3,j})^2\cdot \lambda_{2,j}^{t+1} + \frac{1}{2}(1-\Gamma_j - \kappa_{2,j})^2\cdot \lambda_{3,j}^{t+1}\right).
\end{aligned}
\]
Therefore, this leads to
\[
f(t+1) = \frac{R}{2} h_1(t+1) + \frac{\tilde{R}}{2}h_0(t+1) + \EE(t),
\]
with the error term $\EE(t) = \EE_{IC}(t)$. Now taking $n\to \infty$ combined with Proposition \ref{prop:error_initial} gives \eqref{eq:Volterra_eq_fullbatch}. Note that the convergence rate of $\psi_{\text{full}}(t)$ is determined by $\lambda_{2,\max}^{(\text{full})} := \max_j |\lambda_{2,j}^{(\text{full})}|$, where
\[
\lambda_{2,j}^{(\text{full})} = \frac{-2\Delta + (\Omega_j^{(\text{full})})^2 + \sqrt{(\Omega_j^{(\text{full})})^2((\Omega_j^{(\text{full})})^2 - 4\Delta)}}{2},\ \Omega_j^{(\text{full})} \defas 1-\gamma_\text{full}\sigma_j^2 + \Delta.
\]
And observing that $\lambda_{2,j}^{(\text{full})} = \lambda_{2,j}$ if $\gamma_{\text{full}} = \gamma\zeta$ gives our conclusion.
\end{proof}

\subsection{Choice of optimal learning rate and momentum}\label{subsec:optimal_parameters}
In this section, we prove Proposition \ref{prop:low_noise_regime} which states a sufficient condition for a set of learning rate and momentum parameters to be in the problem constrained regime. We also offer the proof of Proposition \ref{prop:lambda_delta_choice}, which gives an optimal learning rate and momentum so that SGD+M outperforms SGD in terms of convergence rate. Finally, the proof of Proposition \ref{prop:lb} will be given as well.
% \paragraph{Remark on the different definition of low/high regime:} Here we define problem constrained regime regime with parameter $\epsilon$ as
% \[\text{problem constrained regime regime} \defas \{(\gamma, \Delta) \, : 1- \textcolor{red}{\sqrt{\Xi}} \textcolor{red}{<} (1- (\textcolor{red}{\sqrt{\lambda_{2, \max}})^{-1}} ) (1 - \varepsilon)\}.\]
% So, when $\varepsilon = 1/2$, we have
% \[
% \sqrt{\Xi} > 1 - \frac{1}{2}(1- (\sqrt{\lambda_{2, \max}})^{-1} ) = \frac{1+(\sqrt{\lambda_{2, \max}})^{-1}}{2}.
% \]
\paragraph{Proof of Proposition \ref{prop:low_noise_regime}}

\paragraph{Remark on the assumption.} The first assumption on the learning rate, i.e., $\gamma \le \frac{1+\Delta}{\zeta\sigma_{\max}^2}$ implies that $\Omega_j \ge 0$ for all $j\in [n]$. On the other hand, the second condition, i.e., $\gamma \le \frac{(1-\sqrt{\Delta})^2}{\zeta \sigma_{\min}^2}$, implies that $\Omega_{\max} \ge 2\sqrt{\Delta}$. Note that when $\Omega_{\max} = 2\sqrt{\Delta}$, $\lambda_{2,\max} = \frac{1}{2}\big( -2\Delta + \Omega_{\max}^2 + \sqrt{\Omega_{\max}^2(\Omega_{\max}^2-4\Delta)}\big) = \Delta$.

\begin{proof}
First recall that $\varphi_j^{(n)} = \frac{(1-\zeta)\gamma\sigma_j^2\Gamma_j}{n}$ and observe that, for $1<\Upsilon<\lambda_{2,\max}^{-1},$
\begin{equation*}\label{eq:Malthusian_1}
\begin{aligned}
    \tilde{\mathcal{K}}(\Upsilon) \defas \sum_{t=0}^\infty \Upsilon^t \mathcal{K}(t) &= \sum_{t=0}^\infty \bigg(\sum_{j=1}^{n} \frac{\varphi_j^{(n)}}{\Omega_j^2 - 4\Delta}(-2\Delta\cdot (\Upsilon\lambda_{1,j})^{t} + \lambda_{2,j} \cdot (\Upsilon\lambda_{2,j})^{t} + \lambda_{3,j} \cdot (\Upsilon\lambda_{3,j})^{t}) \bigg) \\
    & = \sum_{j=1}^{n}\frac{\varphi_j^{(n)}}{\Omega_j^2 - 4\Delta} \left( \frac{-2\Delta}{1-\Upsilon\Delta} + \frac{\lambda_{2,j}}{1-\Upsilon\lambda_{2,j}} + \frac{\lambda_{3,j}}{1-\Upsilon\lambda_{3,j}} \right) \\
    &= \sum_{j=1}^{n}\frac{\varphi_j^{(n)}}{\Omega_j^2 - 4\Delta} \bigg( \frac{-2 \Delta}{1- \Upsilon\Delta} +
         \frac{-2\Delta + \Omega_j^2 - 2\Upsilon\Delta^2}{1 + \Upsilon(2\Delta - \Omega_j^2) + \Upsilon^2\Delta^2} \bigg) \\
         &= \sum_{j=1}^{n}\frac{(1-\zeta)\zeta \gamma^2 \sigma_j^4}{n }\bigg(\frac{(1+\Upsilon\Delta)}{(1-\Upsilon\Delta)(1 - \Upsilon(-2\Delta + \Omega_j^2) + \Upsilon^2\Delta^2)}\bigg) \\
         & = \sum_{j=1}^{n}\frac{C \zeta \gamma \sigma_j^4}{n }\bigg(\frac{(1-\Delta)(1+\Upsilon\Delta)}{(1-\Upsilon\Delta)(1+\Upsilon\Delta + \sqrt{\Upsilon}\Omega_j)(1+\Upsilon\Delta - \sqrt{\Upsilon}\Omega_j)} \bigg),
\end{aligned}
\end{equation*}
where $C = (1-\zeta)\gamma/(1-\Delta)$. Observe, as $\Omega_j \ge 0$,
\begin{equation}\label{eq:Kappa_Upsilon_ub}
\tilde{\mathcal{K}}(\Upsilon) \le \frac{C \zeta \gamma}{n }\cdot\frac{(1-\Delta)}{(1-\Upsilon\Delta)}\sum_{j=1}^{n}\frac{\sigma_j^4}{1+\Upsilon\Delta - \sqrt{\Upsilon}\Omega_j}.
\end{equation}

Let us analyze the denominator of the summand first. Let $f_j(x) := 1+x^2\Delta - x\Omega_j, 1<x <\sqrt{\Delta^{-1}}$. Then the denominator in the summand is $f_j(\sqrt{\Upsilon})$. Especially, $f_{\min}(x) := \min_j f_j(x) = 1+x^2\Delta - x\Omega_{\max}, \Omega_{\max} = 1-\gamma\zeta{\sigma_{\min}}^2 + \Delta$. Note that $f_{\min}(x)$ is a quadratic function of $x$ and the solution to $f_{\min}(x) = 0$ is $x = \sqrt{\lambda_{2,\max}^{-1}}$ (the other root $\sqrt{\lambda_{3,\max}^{-1}}$ exceeds the valid domain of $x$). Also, observe that this is where the assumption $\Omega_{\max} \ge 2\sqrt{\Delta}$ is used.

Note that $f_j(1) = \gamma\zeta\sigma_j^2$. Simple algebra shows that for $1<x<\alpha<\beta$, $c_1(x-\alpha)^2 \le c_2(x-\alpha)(x-\beta)$ where $c_1,c_2>0$ satisfies $c_1(1-\alpha)^2 = c_2(1-\alpha)(1-\beta)$, i.e. two functions coincide at $x=1$ and $x=\alpha$. If $\lambda_{2,j} \ge 0$, or $\Omega_j^2 - 4\Delta \ge 0$, then the argument above gives
\[
f_{\min}\Big(\frac{1+\sqrt{\lambda_{2,\max}^{-1}}}{2}\Big) \ge \frac{\gamma\zeta\sigma_{\min}^2}{4}.
\]
Now for any $j \in [n]$, note that $f_j(x) - f_{\min}(x) = x\gamma\zeta(\sigma_j^2 - \sigma_{\min}^2)$ is an increasing function of $x\in\mathbb{R}$. So observe,
\[
\begin{aligned}
f_j\Big(\frac{1+\sqrt{\lambda_{2,\max}^{-1}}}{2}\Big) &\ge f_{\min}\Big(\frac{1+\sqrt{\lambda_{2,\max}^{-1}}}{2}\Big) + \gamma\zeta(\sigma_j^2 - \sigma_{\min}^2)\\
&\ge \frac{\gamma\zeta\sigma_{\min}^2}{4} + \frac{1}{4}\gamma\zeta(\sigma_j^2 - \sigma_{\min}^2) = \frac{1}{4}\gamma\zeta\sigma_j^2.
\end{aligned}
\]

Therefore, when $\sqrt{\Upsilon} = \frac{1+\sqrt{\lambda_{2,\max}^{-1}}}{2}$, \eqref{eq:Kappa_Upsilon_ub} gives
\[
\begin{aligned}
\tilde{\mathcal{K}}(\Upsilon) &\le \frac{4C}{n}\cdot\frac{(1-\Delta)}{(1-\Upsilon\Delta)} \sum_{j=1}^{n}\sigma_j^2.
\end{aligned}
\]
Moreover, in order to bound the denominator $(1-\Upsilon\Delta)$ on the right-hand side, if we define $g(x) \defas 1-\Delta x^2$, $g$ is a decreasing function on $[1,\sqrt{\Delta^{-1}}]$ and
\[
g(\frac{1+\sqrt{\lambda_{2,\max}^{-1}}}{2}) \ge g(\frac{1+\sqrt{\Delta^{-1}}}{2}) \ge \frac{1-\Delta}{2},
\]
by considering a linear line passing through $(1,1-\Delta)$ and $(\sqrt{\Delta^{-1}},0)$ that lies below $g$. Therefore,
\[
\begin{aligned}
\tilde{\mathcal{K}}(\Upsilon) &\le \frac{4C}{n}\cdot\frac{(1-\Delta)}{(1-\Upsilon\Delta)}\cdot \sum_{j=1}^{n}\sigma_j^2 \le  \frac{8(1-\zeta)\gamma}{(1-\Delta)}\frac{1}{n}\tr(\AA^T\AA).
\end{aligned}
\]
\end{proof}

\paragraph{Proof of Proposition \ref{prop:lambda_delta_choice}}
\begin{proof}
First, when the assumption $\frac{(1-\sqrt{\Delta})^2}{\zeta\sigma_{\min}^2} \le \frac{(1+\sqrt{\Delta})^2}{2\zeta\sigma_{\max}^2}$ is met, we have
\[
\frac{1-\sqrt{\Delta}}{1+\sqrt{\Delta}} \le \frac{1}{\sqrt{2\kappa}}.
\]
Solving this inequality with respect to $\Delta$ gives
\[
\Delta \ge  \left( \frac{1-\frac{1}{\sqrt{2 \kappa}}}{1+\frac{1}{\sqrt{2\kappa}}} \right )^2.
\]
Furthermore, from Proposition \ref{prop:low_noise_regime}, when $\gamma = \frac{(1-\sqrt{\Delta})^2}{\zeta\sigma_{\min}^2}$, observe that $\lambda_{2,\max} = \Delta$ and 
\[
\frac{8(1-\zeta)\gamma}{(1-\Delta)}\frac{1}{n}\tr(\AA^T\AA) = \frac{8(1-\zeta)(1-\sqrt{\Delta})^2}{\zeta\sigma_{\min}^2(1-\Delta)}\frac{1}{n}\tr(\AA^T\AA) = \frac{8(1-\zeta)}{\zeta}\cdot\frac{1-\sqrt{\Delta}}{1+\sqrt{\Delta}} \bar{\kappa} < 1.
\]
Therefore, this condition implies
\[
\frac{1-\sqrt{\Delta}}{1+\sqrt{\Delta}} < \frac{\mathcal{C}}{\bar{\kappa}},
\]
where $\mathcal{C} = \mathcal{C}(\zeta) \defas \zeta/(8(1-\zeta))$ and solving this inequality gives
\[
\sqrt{\Delta} >  \frac{1-\frac{\mathcal{C}}{\bar{\kappa}}}{1+\frac{\mathcal{C}}{\bar{\kappa}}}.
\]
\end{proof}

Next, we present the proof of Proposition \ref{prop:lb}.
\paragraph{Proof of Proposition \ref{prop:lb}}
\begin{proof}
For brevity and clarity, we define the following quantities:
\[
\gamma_1 \defas \frac{1+\Delta}{\zeta\sigma_{\max}^2},\quad \gamma_2 \defas \frac{(1-\sqrt{\Delta})^2}{\zeta\sigma_{\min}^2},\ \text{and}\ \gamma_3 \defas \frac{1}{\bar{\kappa}\sigma_{\min}^2}\cdot\frac{1-\Delta}{1-\zeta}.
\]
Note that the assumptions on the learning rate $\gamma$ in Proposition \ref{prop:kernel_norm} imply that $\gamma \le \min(\gamma_1, \gamma_3)$.

First, let us assume that $\gamma \ge \gamma_2$. Recall that this condition implies that $\Omega_{\max}^2-4\Delta \le 0$ and therefore $\lambda_{2, \max} = \Delta$. In this case, $\gamma_2 \le \gamma \le \gamma_3$ implies that
\[
\begin{gathered}
   \frac{(1-\sqrt{\Delta})^2}{\zeta\sigma_{\min}^2} \le \frac{1}{\bar{\kappa}\sigma_{\min}^2}\cdot\frac{1-\Delta}{1-\zeta}
   \Rightarrow \frac{1-\sqrt{\Delta}}{1+\sqrt{\Delta}} \le \frac{\zeta}{(1-\zeta)\bar{\kappa}},\ \text{or}\\
   \sqrt{\Delta} \ge \frac{1- \frac{\zeta}{(1-\zeta)\bar{\kappa}}}{1+ \frac{\zeta}{(1-\zeta)\bar{\kappa}}}.
\end{gathered}
\]

So, combining the condition $\zeta \le 1/2$ with the above inequality gives the claim. Therefore, for the following arguments, we assume that $\gamma \le \gamma_2$. It is worthwhile to note that by the definition of $\lambda_{2,\max}$ and $\Omega_{\max} = 1 - \gamma\zeta\sigma_{\min}^2+\Delta$, we know that $\lambda_{2,\max}$ is an increasing function of $\Omega_{\max}$ when $\Omega_{\max}^2 - 4\Delta \ge 0$ and $\Omega_{\max} \ge 0$ and $\Omega_{\max}$ is a decreasing function of $\gamma$. Therefore, $\lambda_{2,\max}$ attains its minimum at the maximum feasible learning rate $\gamma$. 

First, let as assume that $\gamma \le \gamma_3 \le \gamma_1$. Then $\lambda_{2,\max}$ attains its minimum at $\gamma = \gamma_3$ and
\[
\Omega_{\max} \ge 1+\Delta - \zeta\sigma_{\min}^2\cdot\frac{1}{\bar{\kappa}\sigma_{\min}^2}\cdot\frac{1-\Delta}{1-\zeta}
= 1+\Delta - \frac{\zeta}{(1-\zeta)\bar{\kappa}}(1-\Delta).
\]
By observing that
\[
\sqrt{\lambda_{2,\max}} = \frac{\Omega_{\max} + \sqrt{\Omega_{\max}^2-4\Delta}}{2},
\]
we have
\[
\sqrt{\lambda_{2,\max}} \ge \frac{1+\Delta-c_1(1-\Delta) + \sqrt{1+\Delta-c_1(1-\Delta)^2-4\Delta}}{2} =: f_1(\Delta),
\]
where $c_1 \defas \frac{\zeta}{(1-\zeta)\bar{\kappa}} < 1$. One can easily verify that $f_1$ is an increasing function of $\Delta, 0\le \Delta < 1$, so we conclude that
\[
\sqrt{\lambda_{2,\max}} \ge \sqrt{\lambda_{2,\max}}|_{\Delta = 0} = 1-c_1,
\]
and we obtain the claim with the condition $\zeta \le 1/2$.

Second, now we assume that $\gamma \le \gamma_1\le \gamma_3$. Then $\lambda_{2,\max}$ attains its minimum at $\gamma = \gamma_1$ and
\[
\Omega_{\max} \ge 1+\Delta - \frac{1+\Delta}{\sigma_{\max}^2}\cdot\sigma_{\min}^2 = (1+\Delta)(1-\frac{1}{\kappa}).
\]
Therefore, for the same argument as above, we have
\[
\sqrt{\lambda_{2,\max}} \ge \frac{(1+\Delta)(1-c_2) + \sqrt{(1+\Delta)^2(1-c_2)^2-4\Delta}}{2} =: f_2(\Delta),
\]
where $c_2 \defas 1/\kappa$. On the other hand, the condition $\gamma_1 \le \gamma_3$ gives
\begin{equation}\label{eq:gamma13upperbound}
\begin{gathered}
\frac{1+\Delta}{\zeta\sigma_{\max}^2} \le \frac{1}{\bar{\kappa}\sigma_{\min}^2}\cdot\frac{1-\Delta}{1-\zeta} \Rightarrow \frac{1-\Delta}{1+\Delta} \ge \frac{\bar{\kappa}}{\kappa}\cdot\frac{1-\zeta}{\zeta},\ \text{or}\\
\Delta \le \frac{1 - \frac{\bar{\kappa}}{\kappa}\cdot\frac{1-\zeta}{\zeta} }{ 1+ \frac{\bar{\kappa}}{\kappa}\cdot\frac{1-\zeta}{\zeta}} =: \Delta_*.
\end{gathered}
\end{equation}

Let us define $c_3 \defas \frac{\bar{\kappa}}{\kappa}\cdot\frac{1-\zeta}{\zeta} < 1$. Then it suffices to show that $\sqrt{\lambda_{2,\max}} \ge 1-D\frac{c_2}{c_3}$ for some $D>0$. 

% First, let us check that the upper bound of $\Delta$ in \eqref{eq:gamma13upperbound} is a meaningful upper bound, i.e., when $\Delta \le 1-c_3/(1+c_3)$, then the radical in the definition of $f_2$ ($(1+\Delta)^2(1-c_2)^2-4\Delta$) is nonnegative. Simple algebra shows that $f_2$ is a concave function on $[0, \Delta_u]$ where $\Delta_u \defas \frac{1-\sqrt{2c_2 - c_2^2}}{1-c_2} = \frac{1-c_2}{1+\sqrt{2c_2-c_2^2}}$ makes the radical vanish. So it suffices to show that $\Delta_* \le \Delta_u$.
Simple algebra shows that $f_2$ is a concave function on $[0, \Delta_u]$ where $\Delta_u \defas \frac{1-\sqrt{2c_2 - c_2^2}}{1-c_2}$ %= \frac{1-c_2}{1+\sqrt{2c_2-c_2^2}}$ 
makes the radical in the numerator of $f_2$ vanish. Also, one can verify that $f_2(0) = 1-c_2 \ge \frac{1-c_2 + \sqrt{-2c_2 + c_2^2 + c_3^2}}{1+c_3} = f_2(\Delta_*)$ and $\Delta_* \le \Delta_u$, so that $f_2(\Delta) \ge f_2(\Delta_*)$ on $[0,\Delta_*]$. Hence, it suffices to show that $f_2(\Delta_*) \ge  1-D\frac{c_2}{c_3}$ for some $D>0$. Observe,
\[
\begin{aligned}
  f_2(\Delta_*) &= \frac{1-c_2 + \sqrt{-2c_2 + c_2^2 + c_3^2}}{1+c_3}\\
  &= \frac{1-c_2 + c_3\sqrt{1- \frac{2c_2 - c_2^2}{c_3^2}}}{1+c_3}\\
  & \ge \frac{1-c_2 + c_3(1- \frac{2c_2 - c_2^2}{c_3^2})}{1+c_3}\\
  & = 1 - \frac{\frac{c_2}{c_3}(2+c_3-c_2)}{1+c_3}\\
  &\ge 1 - 3\frac{c_2}{c_3},
\end{aligned}
\]
and we finish the proof.
\end{proof}

\section{Numerical Simulations}\label{apx:app_numerical_simulations}
To illustrate our theoretical results, we compare SGD+M's dynamics to \eqref{eq:Volterra_eq_main} on moderately sized problems ($n \approx 1000$) under the setting of section \ref{sec:motivation}. Moreover, the dynamics were also compared using the MNIST data set. Finally, heat maps were displayed to illustrate the interplay between the algorithmic and problem constraints. 
\paragraph{Random least squares.} In all simulations of the Gaussian random least squares problem, the initial weight vector $\xx_0$ is set to zero and the signal and noise vectors $\tilde{\xx}$ and $\eeta$ are set to $N(0, \frac{R}{n}\II)$ and $N(0,\frac{\tilde{R}}{n}\II)$ respectively. Moreover, $\AA$ is constructed by independently sampling its entries $A_{ij} \sim N(0, 1)$ then row-normalized. Similarly, $\bb$ is first sampled $\bb \sim N(0, \frac{\tilde{R}d}{n}\II)$ then the $i$-th entry of $\bb$ is divided by the norm of the $i$-th row of $\AA$. The objective function in which we run SGD+M in all cases is the least squares objective function $f(\xx) = \frac{1}{2}||\AA\xx - \bb||^2$. %\textcolor{red}{include file.py}

\paragraph{Empirical Volterra equation.} 
We assume that we have access to the eigenvalues of the matrix $\AA \AA^T$. The empirical Volterra equation  \eqref{eq:Volterra_eq_main} were computed using a dynamic programming approach by using as inputs the eigenvalues of $\AA\AA^T$. First, the values of  $h_0(t), h_1(t),$ $H_2(t)$ were computed and stored for values of $t\in[T]$. Then a dynamic programming approach is used to compute $\psi(t)$ for values of $t\in[T]$. The discrete convolution operation in \eqref{eq:Volterra_eq_main} is computed by an array reversal and Numpy dot product.%\textcolor{red}{include file.py}

% \textbf{MNIST Experiment.}
\begin{center}
\begin{figure}[htp!]
\centering 
    \includegraphics[scale = 0.15]{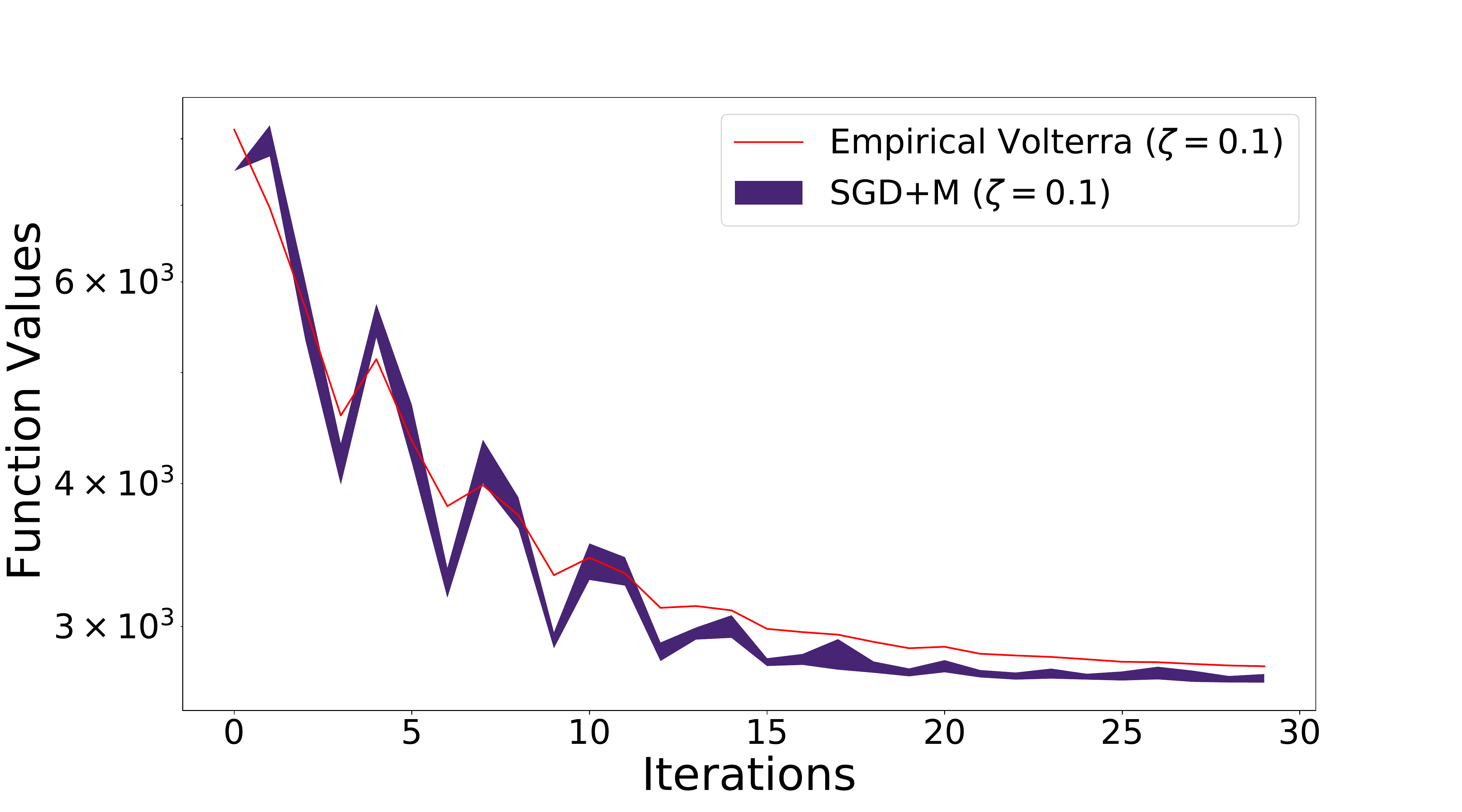}
     \includegraphics[scale =0.15]{figures/MNISTMidBatch.pdf}
      \includegraphics[scale = 0.15]{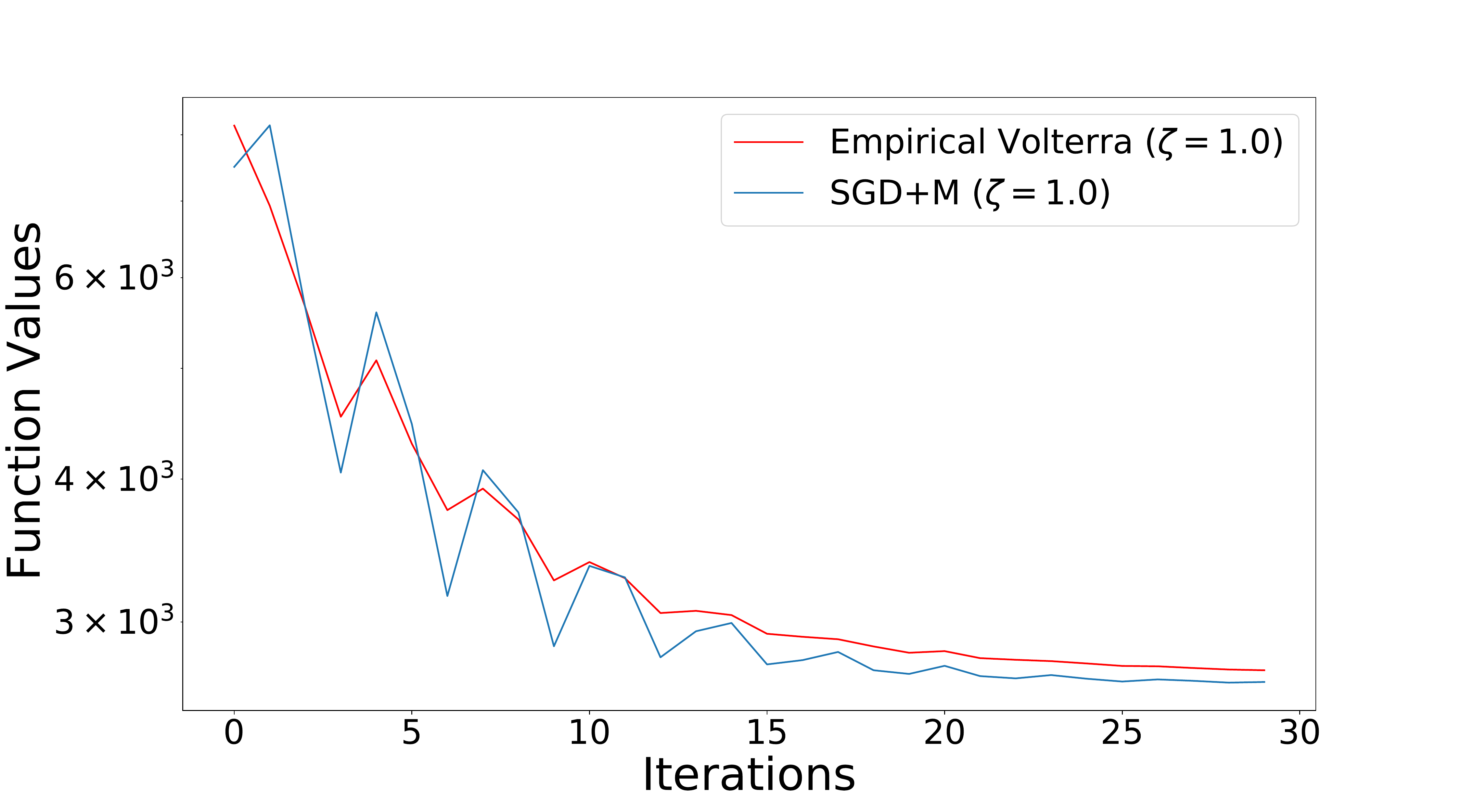}
    \caption{{\bfseries SGD+M vs. Theory on even/odd MNIST.} 
    MNIST ($60,000 \times 28 \times 28$ images) \citep{lecun2010mnist} is reshaped into a single matrix of dimension $60,000 \times 784$ (preconditioned to have centered rows of norm-1), representing 60,000 samples of 10 digits. The target $\bb$ satisfies $\bb_i = 0.5$ if the $i^{th}$ sample is an odd digit and $\bb_i = -0.5$ otherwise. SGD+M was run $10$ times with $\Delta = 0.8$, various values of $\zeta$, and learning rates $\gamma = 0.005, 0.001, 0.0005$ (left to right, top to bottom) and empirical Volterra was run once with $(R=11,000$, $\tilde{R} = 5300)$. The $10^{th}$ to $90^{th}$ percentile interval is displayed for the loss values of 10 runs of SGD+M. Volterra predicts the convergent behavior of SGD+M in this setting.
}
\label{fig:mnistSmallBatch}
\end{figure}
\end{center}
\vspace{-7mm}

\paragraph{Volterra equation with Marchenko-Pastur distribution.} In this setting, we use the theoretical limiting distribution for a large class of random matrices. In a celebrated work by \citep{marchenko1967}, when the entries of $(n \times d)$ matrix $\AA$ are drawn from a common, mean $0$, variance $1/d$ distribution with fourth moment $\mathcal{O}(d^{-2})$ (e.g., Gaussian $N(0,\frac{1}{d})$), it is known that the distribution of eigenvalues of $\AA \AA^T$ converges to the Marchenko-Pastur law

% For $\AA^T$ be an $n \times d$ matrix drawn from a family of random matrices such that $d/n = r \in (0,\infty)$ and suppose these random matrices satisfy the following:

% \begin{enumerate}
%     \item The eigenvalue distribution of $\HH = \AA\AA^T$ converges to a deterministic limit $\mu$ with compact support.  Formally, the empirical spectral measure (ESM) converges weakly to $\mu$, in that for all bounded continuous $g: \mathbb{R} \to \mathbb{R}$
%     \begin{equation} \label{eq:ESM_convergence}
%      \mu_{\HH} = \frac{1}{n}\sum_{i=1}^n g(\lambda_i) \Prto[n]  \int_0^\infty g(\lambda) \mu(d\lambda).
%      \end{equation}
%     \item The largest eigenvalue $\lambda_{+}^{(\HH)}$ of $\HH$ converges in probability to the largest element $\lambda_+$, %in the support of $\mu$
%     i.e.
%     \[\lambda_{+}^{(\HH)} \Prto[d] \lambda_+.\]
% \end{enumerate}
%  A standard example is \textit{the isotropic features model}, which is a random $n\times d$ matrix $\AA$ whose entry is sampled from a common, mean 0, variance $\frac{1}{d}$ distribution with fourth moment $\mathcal{O}(d^{-2})$, such as a Gaussian $N(0,\frac{1}{d})$. In this case, the ESM $\mu_\HH$ of $\HH = \AA\AA^T$ converges to the Marchenko-Pastur law
\begin{equation}
\begin{gathered}
    d\mu_{MP}(\lambda) \defas \delta_0(\lambda)\max\{1-r, 0\} + \frac{r\sqrt{(\lambda-\lambda^-)(\lambda^+-\lambda)}}{2\pi\lambda}1_{[\lambda^-, \lambda^+]},\\
    \text{where}\quad \lambda^-\defas (1-\sqrt{\frac{1}{r}})^2\quad\text{and}\quad \lambda^+\defas (1+\sqrt{\frac{1}{r}})^2.
\end{gathered}
\end{equation}

For these experiments, we generated the data matrix $\AA$ with entries $N(0, 1/d)$. Instead of using the eigenvalues of $\AA \AA^T$ in the Volterra equation \eqref{eq:Volterra_eq_main}, we used the Marchenko-Pastur distribution directly.  We used a Chebyshev quadrature rule to approximate the integrals with respect to the Marchenko-Pastur distribution that arise in  \eqref{eq:Volterra_eq_main}. Similar to the finite case, the integrand is computed using dynamic-programming. However, the implementation of the quadrature rule ignores the point mass at $0$ so we manually add this at the end. %\textcolor{red}{include file.py}

\paragraph{Volterra simulations remarks.}
Despite the numerical approximations to the integral, the resulting solution to the Volterra equation $\psi$
(red line in figure \ref{fig:volterra_sgdm_comparison}) models the true behavior of SGD+M remarkably well. Notably, the fit of the Volterra equation to SGD+M is extremely accurate across various learning rates, batch sizes, and momentum parameters as long as the learning rate condition is satisfied. In Figure 1, the red line corresponds to the Volterra equation with Marchenko-Pastur distribution with values $R = \tilde{R} = 1$. Also, we opted to shade the $10^{th}$ to $90^{th}$  percentile instead of an $\alpha$-confidence interval for an easier read. One can observe the exact same dynamics in either case.

% As expected, for both cases of $r > 1$ and $r < 1$ and sufficiently large $n$ (i.e. $n \approx 1000$),
% the functional values of empirical Volterra equation and Volterra-Marchenko-Pastur coincide. \textcolor{red}{Add figure for this? and is this necessary to include?}

\paragraph{Heat maps.}
The heat maps (Figure \ref{fig:heatmap}) illustrate when the convergence rate is dictated by the problem, ($\lambda_{2,\max} \geq \Xi^{-1}$) or by the algorithm ($\lambda_{2,\max} < \Xi^{-1}$). The white regions of the heat maps represent divergent behaviour ($\lambda_{2,\max} > 1$). The threshold, denoted by the red line, describes the boundary for two different regimes. %is exactly when the convergence is dictated by the algorithm. 
Any non-white point above or to the right of the threshold lies in the algorithmic constraint setting. Conversely, all non-white points lying below or to the left of the threshold lies in the problem constraint setting.

The heat maps are generated by computing $\lambda_{2,\max}$ and $\Xi$ (when it exists) across values of $(\Delta, \gamma)$. Here $\lambda_{2,\max}$ is obtained by calculating
%To obtain $\lambda_{2,max}$, we took the maximum
% \begin{equation*}
%     \lambda_{2,max} = \max\{\lambda_{2,-}, \lambda_{2,+} \}
% \end{equation*}
% where 
\begin{equation*}
    \lambda_{2,\max} = \frac{-2\Delta + \Omega_{\max}^2 + \sqrt{\Omega_{\max}^2(\Omega_{\max}^2 - 4\Delta)}}{2}, \ \Omega_{\max} =  1-\gamma\zeta \sigma_{\min}^2 + \Delta, \text{ and } \sigma_{\min}^2 = \big(1 - \sqrt{\frac{1}{r}}\big)^2.
\end{equation*}
In order to compute $\Xi$, recall that $\Xi$ is the solution of 
\begin{equation}\label{eq:malthexp}
    \tilde{\mathcal{K}}(\Xi) \defas \sum_{t=0}^{\infty} \Xi^t \mathcal{K}(t) = 1,
\end{equation}
when it exists. One can show (\ref{eq:malthexp}) is equal to (see Appendix \ref{subsec:optimal_parameters} for detail)
\begin{equation}\label{eq:malthsolver}
    \begin{aligned}
   \sum_{j=1}^{n}\frac{ \zeta(1-\zeta) \gamma^2 \sigma_j^4}{n }\bigg(\frac{(1+\Xi\Delta)}{(1-\Xi\Delta)(1+\Xi\Delta + \sqrt{\Xi}\Omega_j)(1+\Xi\Delta - \sqrt{\Xi}\Omega_j)} \bigg) = 1, 
\end{aligned}
\end{equation}
which is computed using the Chebyshev quadrature rule.

For a given $(\Delta, \gamma)$, we are interested in the algorithmic case ($1 \leq \Xi \leq \lambda_{2,\max}^{-1}$) so if $\lambda_{2,\max}^{-1}< 1$ we assign a Nan value to $\Xi$. Otherwise, because of monotonicity of $\tilde{\mathcal{K}}$ in $\eqref{eq:malthexp}$, we perform a binary search starting with initial endpoints $1$ and $\lambda_{2,\max}^{-1}$ to find the solution $\Xi$ satisfying $\eqref{eq:malthexp}$. Finally, with $\Xi^{-1}$ and $\lambda_{2,\max}$ computed for a given $(\Delta, \gamma)$, we plot the maximum of the two.

% There is a small batch regime and a large batch regime. The small batch regime is when the first term dominates. The large batch the 2nd term dominates and you get momentum like behavior. The exact tradeoff occurs when 
% \[ \frac{K}{\bar{\kappa}} = \frac{1}{\sqrt{2 \kappa}}\]
% Here $K = \frac{\zeta}{8(1-\zeta)}$.

% When you send $\zeta \to 0$, you end up in the first branch of \eqref{eq:kiwon_good}. The effective stepsize compared with SGD is given by $\gamma^{\text{SGD}} = \frac{\gamma^{\text{SGD+M}}}{1-\Delta}$. If we substitute the relevent quantities into this, we get $\gamma^{\text{SGD}} = \frac{C}{\bar{\kappa} \zeta \sigma_{\min}^2} = \frac{1}{1/n\tr(\AA^T \AA) (1-\zeta)8}$

% We define the momentum effect as when the convergence rate goes to $1-\sqrt{\kappa}$. When $\zeta$ is large enough we get the momentum effect 
% \[
% \zeta \ge \frac{8 \bar{\kappa}}{\sqrt{2\kappa}} \left ( \frac{1}{1 + 8 \frac{\bar{\kappa}}{\sqrt{2\kappa}}}\right)
% \]
% and when $\zeta \le \text{blah}$ you get SGD behavior. For this we need to lower bound $\lambda_{2, \max} \ge \frac{1-\bar{\kappa}}{1+ \bar{\kappa}}$.

% Should be 
% \[\zeta \ge 
% \frac{\bar{\kappa}}{\sqrt{\kappa}}
% \]
\begin{figure}[t]
    \centering
    \includegraphics[width=\linewidth]{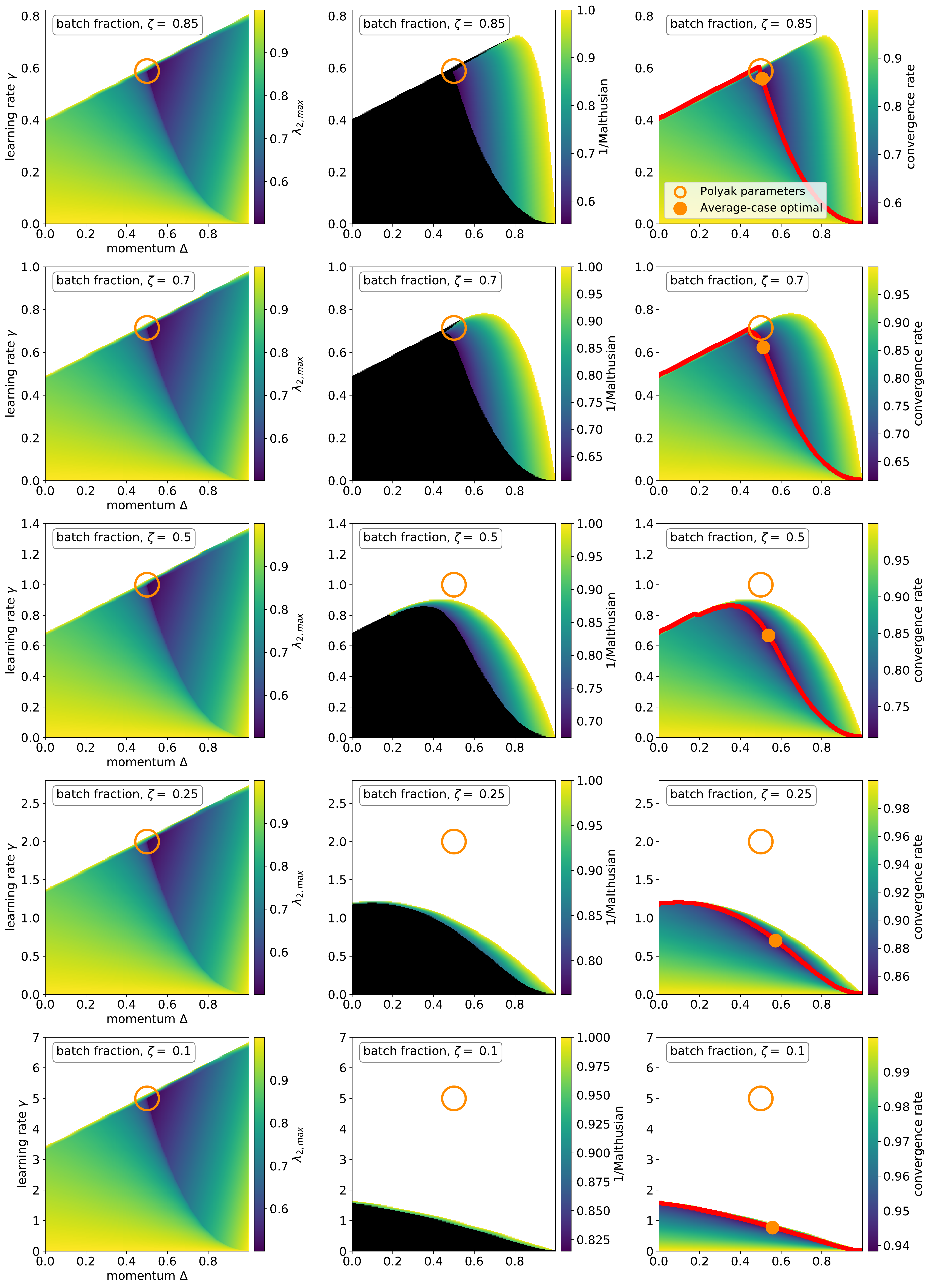}
    \vspace{-0.5cm} \caption{{\bfseries Different convergence rate regions.} Same set-up as in Figure~\ref{fig:heatmap} but for a wider range of batch fractions.
    % Using the analytic expressions for $\lambda_{2,\max}$ (column 1) and numerical computations of 1/Malthusian exponent $\Xi^{-1}$ (column 2), plot the convergence rate for SGD+M, $\max \{\lambda_{2,\max}, \Xi^{-1}\}$, (column 3) on a Gaussian matrix $\AA$ with $n = 2000$ and $d = 1000$; convergence rate is a function of momentum ($x$-axis) and learning rate ($y$-axis); Optimal parameters that maximize $\lambda_{2,\max}$ denoted by Polyak parameters (orange circle) and the optimal parameters for SGD+M (orange dot); below red line (third column) is the problem constrained region; otherwise the algorithmic constrained region. When batch size $\zeta = 0.5$ (top row) is large, the SGD+M convergence rate (and optimal parameters) is close to the optimal deterministic momentum rate of $\sqrt{\kappa}$ (and Heavy-ball parameters). As the batch size decreases ($\zeta = 0.25$), the convergence rate becomes that of SGD and the optimal parameters of SGD+M and Polyak are quite far from each other.
}
    \label{fig:heatmap_full}
\end{figure}

% \begin{center}
% \begin{figure}
%     \includegraphics[height=8cm]{figures/MNISTMidBatch.pdf}
% \caption{{\bfseries SGD+M vs. Theory on even/odd MNIST.} Same description as above. 
% $(\Delta = 0.8, \gamma = 0.001, \zeta = 0.5)$}
% \label{fig:mnistMidBatch}
% \end{figure}
% \end{center}

% \begin{center}
% \begin{figure}
%     \includegraphics[height=8cm]{figures/MNISTFullBatch.pdf}
% \caption{{\bfseries SGD+M vs. Theory on even/odd MNIST.}
% $(\Delta = 0.8, \gamma = 0.0005, \zeta = 1.0)$. SGD+M ran once.}
% \label{fig:mnistLargeBatch}
% \end{figure}
% \end{center}

% \begin{figure}[t]
% \centering
% \includegraphics[width=0.8\linewidth]{}
% \vspace{-0.5cm}
% \caption{{\bfseries Comparison of SGD+M and empirical Volterra on MNIST $(\Delta, \gamma, R, \tilde{R}) = (0.5, 0.001, 0.4, 1)$ with }
% }
% \end{figure}

\end{document}